\theoremstyle{plain}
\newtheorem{Pocz}{Poczatek}[section]
\newtheorem{Proposition}[Pocz]{Proposition}
\newtheorem{Theorem}[Pocz]{Theorem}
\newtheorem{Corollary}[Pocz]{Corollary}
\newtheorem{Lemma}[Pocz]{Lemma}
\newtheorem{Observation}[Pocz]{Observation}
\newtheorem{Example}[Pocz]{Example}
\newtheorem{Nonexample}[Pocz]{Nonexample}
\theoremstyle{definition}
\newtheorem{Definition}[Pocz]{Definition}
\theoremstyle{remark}
\newtheorem{Remark}[Pocz]{Remark}
\newcommand{\mesh}{\mathsf{mesh}}
\def\asdim{\mathrm{asdim}}
\DeclareMathOperator*{\st}{st}
\numberwithin{equation}{section}
\author{Logan ~ Higginbotham}
\address{University of Tennessee, Knoxville, USA}
\email{lhiggin3@vols.utk.edu}
\author{Thomas ~ Weighill}
\address{University of Tennessee, Knoxville, USA}
\email{tweighil@vols.utk.edu}
\title{Coarse quotients by group actions and the maximal Roe algebra}
\date{ \today
}
\keywords{}
\subjclass[2010]{51F99,  46L85}
\begin{document}
%\fontsize{18}{20pt}\selectfont

\begin{abstract}
For a discrete metric space (or more generally a large scale space) $X$ and an action of a group $G$ on $X$ by coarse equivalences, we define a type of coarse quotient space $X_G$, which agrees up to coarse equivalence with the orbit space $X/G$ when $G$ is finite. We then restrict our attention to what we call coarsely discontinuous actions and show that for such actions the group $G$ can be recovered as an appropriately defined automorphism group $\mathsf{Aut}(X/X_G)$ when $X$ satisfies a large scale connectedness condition. We show that for a coarsely discontinuous action of a countable group $G$ on a discrete bounded geometry metric space $X$ there is a relation between the maximal Roe algebras of $X$ and $X_G$, namely that there is a $\ast$-isomorphism  
$C^\ast_{\max}(X_G)/\mathcal{K} \cong C^\ast_{\max}(X)/\mathcal{K} \rtimes G$,
where $\mathcal{K}$ is the ideal of compact operators. If $X$ has Property A and $G$ is amenable, then we show that $X_G$ has Property A, and thus the maximal Roe algebra and full crossed product can be replaced by the usual Roe algebra and reduced crossed product respectively in the above equation.
\end{abstract}

\maketitle

\section{Introduction}
An important notion in general and algebraic topology is that of a space equipped with a group action. For certain group actions (such as properly discontinuous actions) a lot can be said about the relationship between the space $X$ and the orbit space $X/G$. This paper makes some first steps towards a similar theory in the context of coarse geometry. 

Coarse geometry is the study of the large scale behaviour of metric spaces. The main motivating examples of metric spaces which exhibit interesting large scale behaviour are finitely generated groups with a word metric (this goes back to Gromov \cite{Gromov93}) and complete non-compact Riemannian manifolds (mainly with applications to index theory -- see \cite{RoeIndex}). The maps of interest in coarse geometry are the bornologous maps (also called large scale continuous maps). A (not necessarily continuous) map $f: X \rightarrow Y$ between metric spaces is called \textbf{large scale continuous} if is satisfies the following condition:
$$
\forall_{R > 0} \exists_{S > 0}\  d(x,y) \leq R \implies d(f(x), f(y)) \leq S.
$$
The main object of study in this paper is an action of a group $G$ on a discrete metric space $X$ such that the action of each $g \in G$ is a large scale continuous map. In fact, because of the existence of inverses in $G$, each action will be a so-called coarse equivalence.

Given such an action, one may want to consider the orbit space $X / G$ and put a metric on it which is natural from the point of view of large scale geometry. However, if $G$ is not finite, then (as mentioned above) the group $G$ itself has large scale behaviour which is not reflected in the space $X / G$. With this in mind, in this paper we introduce a different space (which we denote by $X_G$) which agrees with $X/G$ (with an appropriate metric) up to coarse equivalence only when $G$ is finite. After studying some general properties of the space $X_G$, we restrict our attention to a particular class of group actions, which we call coarsely discontinuous actions. These are the analogues of properly discontinuous actions for topological spaces. We show that when the action of $G$ is coarsely discontinuous and $X$ is an unbounded space which is coarsely one-ended (see Definition \ref{oneended}), then the group $G$ can be recovered from $X_G$ as an appropriately defined automorphism group $\mathsf{Aut}(X/X_G)$. 

An important $C^\ast$-algebra in the index theory of non-compact complete Riemannian manifolds is the Roe algebra. The Roe algebra is a coarse invariant (that is, invariant under coarse equivalences up to isomorphism) and is functorial with respect to proper large scale continuous maps at the level of $K$-theory (see for example \cite{RoeIndex}). Thus the Roe algebra is naturally an object of study in coarse geometry. The coarse Baum-Connes conjecture (see for example \cite{YuCoarseBC}) concerns an index map from the $K$-homology of Rips complexes on $X$ to the $K$-theory of the Roe algebra of $X$ (the conjecture is that this map is an isomorphism; it is false in general \cite{CounterEx}). A famous result of Yu \cite{YuEmbed} states that the conjecture is true for spaces which admit a coarse embedding into Hilbert space.

Gong, Wang and Yu introduced the related notion of maximal Roe algebra in \cite{GongMaximal} and formulated a version of the coarse Baum-Connes conjecture for this algebra in \cite{OyonoOyonoYu}.  
In this paper, we obtain some results relating the maximal Roe algebras of $X$ and $X_G$ for coarsely discontinuous actions. In particular, for such actions we obtain a short exact sequence
$$
0\to \mathcal{K} \to C^\ast_{\max}(X_G) \to (C^\ast_{\max}(X) / \mathcal{K}) \rtimes_{\alpha} G \to 0.
$$
where $\mathcal{K}$ is the algebra of compact operators. Note that the crossed product in the above sequence is the full crossed product. It is impossible in general to replace the full crossed product by the reduced crossed product and the maximal Roe algebra by the usual one in the sequence above (see Corollary \ref{YuCorollary}). However, we show that when $G$ is amenable and $X$ has Property A, then $X_G$ also has Property A and we have a short exact sequence
$$
0\to \mathcal{K} \to C^\ast(X_G) \to (C^\ast (X) / \mathcal{K}) \rtimes_{r, \alpha} G \to 0.
$$

Although almost all applications of coarse geometry are concerned with metric spaces, it is useful to consider a more general context -- that of large scale spaces in the sense of Dydak-Hoffland \cite{DH} (these spaces can also be viewed as abstract coarse spaces in the sense of Roe \cite{Roe}). This will become clear, for example, when we define the space $X_G$. 

In Section \ref{Sectypesofquotients}, we ask the following question: to what extent can the space $X_G$ can be considered a type of ``coarse quotient''? To answer this question, we first introduce and study what we call weak coarse quotient maps between large scale spaces. We show that the class of weak coarse quotient maps is closed under closeness and composition with coarse equivalences, and that the weak coarse quotient maps are precisely the maps which correspond to regular epimorphisms in the coarse category. Note that a notion of coarse quotient map has already been introduced by Zhang \cite{Zhang}, so we use the term ``weak'' here to avoid conflicting with that definition. When $G$ is finitely generated, the canonical map $X \to X_G$ will turn out to be a weak coarse quotient map. The final section is devoted to explicitly constructing metrics which induce the various large scale structures considered in the paper, including the one on $X_G$.

\section{Preliminaries}
\subsection{Large scale spaces}
The notion of large scale space (introduced in \cite{DH}) provides a general context for large scale geometry in the same way that uniform spaces provide a general context for questions of uniform continuity or convergence. A large scale space is a space equipped with a collection of families of subsets which are declared to be ``uniformly bounded''. To continue the analogy with uniform spaces above, the notion of large scale space is equivalent to the notion of coarse space in the sense of Roe \cite{Roe} in roughly the same way that the uniform covers definition of uniform space is equivalent to the entourage definition of uniform space (see \cite{DH}). Since the reader may not be familiar with the terminology of large scale structures we recall all the necessary definitions in this section, based mostly on \cite{DH}.  

Let $X$ be a set. Recall that the \textbf{star} $\st(B, \mathcal{U})$ of a subset $B$ of $X$ with respect to a family $\mathcal{U}$ of subsets of $X$ is the union of those elements of $\mathcal{U}$ that intersect $B$. More generally, for two families $\mathcal{B}$ and $\mathcal{U}$ of subsets of X, $\st(\mathcal{B}, \mathcal{U})$ is the family $\{\st(B, \mathcal{U}) \mid B \in \mathcal{B}\}$.

\begin{Definition}
A \textbf{large scale structure} $\mathcal{L}$ on a set $X$ is a nonempty
set of families $\mathcal{B}$ of subsets of $X$ (which we call the \textbf{uniformly bounded families} in $X$) satisfying the following conditions:
\begin{itemize}
\item[(1)] $\mathcal{B}_1 \in \mathcal{L}$ implies $\mathcal{B}_2 \in \mathcal{L}$ if each element of $\mathcal{B}_2$ consisting of more than one point is contained in some element of $\mathcal{B}_1$.
\item[(2)] $\mathcal{B}_1, \mathcal{B}_2 \in \mathcal{L}$ implies $\st(\mathcal{B}_1, \mathcal{B}_2) \in \mathcal{L}$.
\end{itemize}
\end{Definition}

Note that any uniformly bounded family can be extended to a uniformly bounded cover by adding all the singleton subsets to it, so we will often assume that a particular uniformly bounded family is in fact a cover. Also, note that if $\mathcal{U}$ and $\mathcal{V}$ are elements of a large scale structure $\mathcal{B}$, then so is $\mathcal{U} \cup \mathcal{V}$. By a \textbf{large scale space} (or \textbf{ls-space} for short), we mean a set equipped with a large scale structure. To be precise, we should write large scale spaces as pairs $(X, \mathcal{X})$, where $X$ is a set and $\mathcal{X}$ is a large scale structure. However, when we are dealing with only one large scale structure on each set, we will often write simply $X$ to mean the set equipped with its large scale structure. 

\begin{Example}
The canonical example of a large scale space is as follows. Let $(X, d)$ be an $\infty$-metric space. Define the uniformly bounded families in $X$ to be all those families $\mathcal{U}$ for which 
$$
\mathsf{mesh}(\mathcal{U}) = \mathsf{sup} \{ \mathsf{diam}(U) \mid U \in \mathcal{U} \} < \infty
$$
This is the large scale structure \textbf{induced} by the metric $d$. 
\end{Example}

A subset of a large scale space $X$ is called \textbf{bounded} if it is an element of some uniformly bounded family in $X$. A large scale space is called \textbf{coarsely connected} if every finite set is bounded (for example, every metric space is such). Every set admits a smallest coarsely connected large scale structure, namely those families of subsets which have only finitely many non-singleton sets.

\begin{Example}\label{exgroup}
Another important class of a large scale structures comes from group structures. If $G$ is a group, we can put a large scale structure on the underlying set of $G$ consisting of all refinements of covers of the form
$$
\{ g \cdot F \mid g \in G\}
$$
for some finite subset $F \subseteq G$. If $G$ is finitely generated, then this is the same large scale structure as the one induced by any word metric (see for example \cite{NowakYu}) on $G$. If $G$ is countable, this is the large scale structure induced by any discrete proper left-invariant metric on $G$ (see \cite{JSmith}).
\end{Example}

When dealing with quotients of metric spaces, it will often be easiest to first define (usually via a generating set) the large scale structure on the quotient, and then find a metric on the quotient space that induces this large scale structure. For this purpose, we need the following result, which was originally stated for coarse spaces by Roe in \cite{Roe} and later for large scale spaces in \cite{DH}. For completeness we include a proof. We say that a collection of families of subsets $\mathcal{A}$ \textbf{generates} a large scale structure $\mathcal{X}$ if $\mathcal{X}$ is the smallest large scale structure containing $\mathcal{A}$.

\begin{Theorem}[Theorem 1.8 in \cite{DH}] \label{metrizable}
Let $X$ be a large scale space. Then there exists an $\infty$-metric on $X$ which induces the large scale structure on $X$ if and only if the large scale structure is countably generated.
\end{Theorem}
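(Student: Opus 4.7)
The plan is to prove the two directions separately, starting with the easier one.

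For the (only if) direction, assume $d$ is an $\infty$-metric on $X$ inducing the large scale structure. For each $n \in \NN$, let $\mathcal{V}_n = \{B_d(x,n) \mid x \in X\}$; each $\mathcal{V}_n$ has mesh at most $2n$ and so is uniformly bounded. Any uniformly bounded family $\mathcal{U}$ has $\mesh(\mathcal{U}) < \infty$, so $\mesh(\mathcal{U}) \leq n$ for some $n$, which means each non-singleton $U \in \mathcal{U}$ is contained in some element of $\mathcal{V}_n$ (pick $x \in U$ and take $B_d(x,n)$). Hence $\{\mathcal{V}_n\}_{n \in \NN}$ generates the large scale structure.

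For the (if) direction, suppose the large scale structure $\mathcal{X}$ is generated by a countable family $\{\mathcal{B}_n\}_{n \in \NN}$ of uniformly bounded covers. Using axiom (2) and the observation that the union of two uniformly bounded covers is uniformly bounded, I would inductively build a sequence $\mathcal{U}_1 \preceq \mathcal{U}_2 \preceq \cdots$ of uniformly bounded covers such that: (a) $\mathcal{B}_n$ refines $\mathcal{U}_n$ for every $n$, and (b) $\st(\mathcal{U}_n, \mathcal{U}_n)$ refines $\mathcal{U}_{n+1}$. Concretely, set $\mathcal{U}_1 = \mathcal{B}_1$ (extended to a cover), and $\mathcal{U}_{n+1} = \st(\mathcal{U}_n, \mathcal{U}_n) \cup \mathcal{B}_{n+1}$. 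By construction, every $\mathcal{B}_n$ and every star-iterate lies below some $\mathcal{U}_m$, and since $\{\mathcal{B}_n\}$ generates $\mathcal{X}$, \emph{every} uniformly bounded family is refined by some $\mathcal{U}_m$.

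Next, define a function $\rho : X \times X \to [0,\infty]$ by $\rho(x,x) = 0$ and, for $x \neq y$, $\rho(x,y) = 2^n$ where $n$ is the least index with $\{x,y\} \subseteq U$ for some $U \in \mathcal{U}_n$ (and $\rho(x,y) = \infty$ if no such $n$ exists). Then set
$$
d(x,y) = \inf\Bigl\{ \sum_{i=0}^{k-1} \rho(x_i, x_{i+1}) \,\Big|\, x = x_0, x_1, \ldots, x_k = y \Bigr\}.
$$
By construction $d$ is an $\infty$-pseudometric. To upgrade to an $\infty$-metric we use that if $d(x,y) < 1$ then in particular $\rho(x,y) < 2$, forcing $x = y$. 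The key numerical estimate, proved by induction on the length of a chain in the usual way, is that $\rho(x,y) \leq 2d(x,y)$, so the chain infimum does not collapse.

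Finally I would verify that $d$ induces the original large scale structure. If $\mathcal{U}$ is uniformly bounded, pick $m$ with $\mathcal{U}$ refined by $\mathcal{U}_m$; then $\mesh_d(\mathcal{U}) \leq \mesh_d(\mathcal{U}_m) \leq 2^m$ by the definition of $\rho$. Conversely, if $\mesh_d(\mathcal{U}) \leq R$, choose $m$ with $2^m > R$; then for any $x,y$ in a common element of $\mathcal{U}$ one can unravel a short chain and use property (b) above to show $\{x,y\}$ lies in a common element of $\mathcal{U}_{m+C}$ for some absolute constant $C$ absorbing the chain length, exhibiting $\mathcal{U}$ as a refinement of an element of $\mathcal{X}$.

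The main obstacle is this last verification: controlling chains of $\rho$-jumps via the star-condition (b) to show that bounded $d$-distance implies joint membership in some $\mathcal{U}_m$. Everything else is bookkeeping about axioms (1) and (2) of a large scale structure.
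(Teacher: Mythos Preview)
Your argument is correct, but it takes a heavier route than the paper does. After arranging the generating sequence so that $\st(\mathcal{U}_n,\mathcal{U}_n)$ refines $\mathcal{U}_{n+1}$ (exactly as you do), the paper simply sets $d(x,y)$ equal to the least index $n$ for which $\{x,y\}$ lies in some member of $\mathcal{U}_n$, with no powers of $2$ and no chain infimum. The point is that this integer-valued function is \emph{already} a metric: the star-refinement condition gives $d(x,z)\le \max\{d(x,y),d(y,z)\}+1\le d(x,y)+d(y,z)$, and the verification that it induces the given large scale structure is then a one-line tautology. Your construction---defining $\rho=2^n$ and then passing to the chain pseudometric with the Frink-type estimate $\rho\le 2d$---is the classical Alexandroff--Urysohn machinery for metrizing uniform spaces, and it works here, but it is overkill: the doubling trick is designed to manufacture a triangle inequality out of a function that does not yet satisfy one, whereas in the large-scale setting the star condition hands you the triangle inequality for free.

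One small remark on your write-up: what you flag as the ``main obstacle'' in the final paragraph is in fact already handled by the estimate $\rho\le 2d$ you stated earlier. If $d(x,y)\le R$ then $\rho(x,y)\le 2R$, so $\{x,y\}$ lies in a member of $\mathcal{U}_m$ for $m=\lceil\log_2(2R)\rceil$; no further chain-unravelling is needed. So once you have $\rho\le 2d$, both directions of the equivalence between $d$-bounded and $\mathcal{X}$-bounded families are immediate.
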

\begin{proof}
Clearly if the large scale structure is induced by a metric $d$ then the countable set $(\mathcal{U}_i)_{i \in \mathbb{N}}$, where
$$
\mathcal{U}_i = \{ B(x, i) \mid x \in X \} ,
$$
generates the large scale structure. Suppose then that the large scale structure is generated by the countable set $(\mathcal{U}_i)_{i \in \mathbb{N}}$ of uniformly bounded families. We may assume that for every $i$, $\st(\mathcal{U}_i, \mathcal{U}_i)$ refines $\mathcal{U}_{i+1}$, and that $\mathcal{U}_0$ is actually a cover. For points $x$ and $y$ in $X$, define $d(x,y)$ to be the smallest $i$ for which there is an element of $\mathcal{U}_i$ containing both $x$ and $y$. One checks that this defines an $\infty$-metric that induces the large scale structure. 
\end{proof}

Finally, we establish some notation. For two families of subsets $\mathcal{U}$ and $\mathcal{V}$, we write $\mathcal{U} \leq \mathcal{V}$ if $\mathcal{U}$ refines $\mathcal{V}$, in which case we also say that $\mathcal{V}$ \textbf{coarsens} $\mathcal{U}$. For two points $x$ and $y$, we write $x\mathcal{U} y$, if $\mathcal{U}$ coarsens $\{ \{x, y\} \}$.

\subsection{Large scale continuous maps}
We now recall the right notion of ``continuous map'' between large scale spaces. Given a set map $f: X \rightarrow Y$ from an large scale space $X$ to an large scale space $Y$, we say that $f$ is \textbf{large scale continuous} or \textbf{ls-continuous} if for every uniformly bounded family $\mathcal{U}$ in $X$, the family
$$
f(\mathcal{U}) = \{f(U) \mid U \in \mathcal{U} \}
$$
is uniformly bounded in $Y$. These maps are the equivalent of bornologous maps for coarse spaces. In particular, a map $f: X \rightarrow Y$ from a metric space $X$ to a metric space $Y$ is large scale continuous if and only if for every $R> 0$ there exists an $S > 0$ such that the following holds for every $x_1, x_2 \in X$:
$$
d(x_1,x_2) \leq R \implies d(f(x_1), f(x_2)) \leq S.
$$

Let $f,g: X \rightarrow Y$ be two set maps between large scale spaces, not necessarily large scale continuous. We say that $f$ and $g$ are \textbf{close} and write $f \sim g$ if the family of subsets
$$
\{ \{f(x), g(x)\} \mid x \in X \}
$$
is uniformly bounded. Notice that any map that is close to a large scale continuous map is large scale continuous. Any uniformly bounded family which coarsens the family $\{ \{f(x), g(x)\} \mid x \in X \}$ is said to \textbf{witness} the closeness of $f$ and $g$.

If $A$ is a subset of a large scale space $X$, then there is a natural large scale structure which makes $A$ a \textbf{subspace} of $X$, namely the restriction of all uniformly bounded families in $X$ to $A$. Now let $f: X \rightarrow Y$ be a large scale continuous map. We say that $f$ is

\begin{itemize}
\item \textbf{coarsely surjective} if there is a uniformly bounded family $\mathcal{V}$ in $Y$ such that $Y \subseteq \st(f(X), \mathcal{V})$,
\item \textbf{a coarse embedding} if for every uniformly bounded family $\mathcal{V}$ in $Y$, the family
$$
f^{-1}(\mathcal{V})  = \{ f^{-1}(V) \mid V \in \mathcal{V} \}
$$
is uniformly bounded in $X$.
\item \textbf{a coarse equivalence} if it is both coarsely surjective and a coarse embedding.
\end{itemize}

Clearly the inclusion of a subspace into a large scale space is a coarse embedding. A large scale continuous map $f: X \rightarrow Y$ being coarsely surjective is clearly the same as requiring that the subspace inclusion $f(X) \rightarrow Y$ is a coarse equivalence. One can easily check that a large scale continuous map $f: X \rightarrow Y$ is a coarse equivalence if and only if there exists a large scale continuous map $g: Y \rightarrow X$ such that $fg$ and $gf$ are both close to the respective identity map. This suggests that coarse equivalences should be isomorphisms in an appropriate category. 

\subsection{The coarse category}
Various definitions exist in the literature for the coarse category, which was originally introduced by Roe in \cite{RoeIndex}. For example, Roe requires that all maps in this category be \textbf{proper} (the inverse image of a bounded set is bounded), while the authors of \cite{DikranjanZava} prefer to exclude this requirement since otherwise the category does not admit products. One common requirement however is that close maps are identified in the category. We will use the following definition for this paper.

\begin{Definition}
The category $\mathbf{Coarse}/ \sim$, called the \textbf{coarse category}, is the category whose objects are large scale spaces and whose morphisms are equivalence classes of large scale continuous maps under the closeness relation $\sim$.
\end{Definition} 

Composition in this category is defined in terms of representatives: $[\alpha] \circ [\beta] = [\alpha \circ \beta]$. One can check that this is well-defined. We recall the following lemma, which for coarse spaces is proved in \cite{DikranjanZava}. The proof of the first two facts is an easy adaptation of the proof presented there. The third fact follows from the remarks in the previous subsection.

\begin{Lemma} \label{epimono}
Let $f$ represent a morphism $[f]$ in $\mathbf{Coarse}/\sim$. Then
\begin{itemize}
\item $[f]$ is an epimorphism if and only if $f$ is coarsely surjective;
\item $[f]$ is a monomorphism if and only if $f$ is a coarse embedding;
\item $[f]$ is an isomorphism if and only if $f$ is a coarse equivalence.
\end{itemize} 
\end{Lemma}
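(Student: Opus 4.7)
Item (3) is immediate from the characterization noted just before the lemma: $f$ is a coarse equivalence iff there exists an ls-continuous $g$ with $fg \sim \mathrm{id}$ and $gf \sim \mathrm{id}$, which says exactly that $[g]$ is a two-sided inverse of $[f]$ in $\mathbf{Coarse}/\sim$. So I focus on items (1) and (2).

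The forward implications rest on the large scale axioms. For (2), if $f$ is a coarse embedding and $f\alpha \sim f\beta$ is witnessed by a uniformly bounded family $\VV$ in $Y$, then the family $\{\{\alpha(z), \beta(z)\} : z \in Z\}$ refines $f^{-1}(\VV)$, which is uniformly bounded by the embedding hypothesis. For (1), if $f$ is coarsely surjective with witness $\VV_0$ and $\alpha f \sim \beta f$ is witnessed by $\WW$, then for each $y \in Y$ I pick $x \in X$ with $\{y, f(x)\}$ inside some $V \in \VV_0$ and deduce that $\{\alpha(y), \beta(y)\} \subseteq \alpha(V) \cup W \cup \beta(V)$ for some $W \in \WW$ containing $\{\alpha f(x), \beta f(x)\}$; applying the star axiom twice to the uniformly bounded family $\alpha(\VV_0) \cup \WW \cup \beta(\VV_0)$ (uniformly bounded because $\alpha, \beta$ are ls-continuous) yields a uniformly bounded family refining $\{\{\alpha(y), \beta(y)\} : y \in Y\}$.

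For the converse of (2), if $f$ is not a coarse embedding there is a uniformly bounded $\VV$ in $Y$ and pairs $x_n, x'_n \in f^{-1}(V_n)$ with $V_n \in \VV$ such that $\{\{x_n, x'_n\} : n \in \NN\}$ is not uniformly bounded in $X$. I equip $\NN$ with the large scale structure in which only families consisting entirely of singletons are uniformly bounded, and define $\alpha, \beta: \NN \to X$ by $\alpha(n) = x_n$ and $\beta(n) = x'_n$. These maps are vacuously ls-continuous, $f\alpha \sim f\beta$ is witnessed by $\VV$, but $\alpha \not\sim \beta$, so $[f]$ is not monic.

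The converse of (1) is the main obstacle. The plan is to construct an analogue of the cokernel pair of $f$: let $Z = Y \sqcup_{f(X)} Y$ be two copies of $Y$ glued along $f(X)$, equipped with the large scale structure generated by the uniformly bounded families of each copy, and let $i_1, i_2: Y \to Z$ be the two inclusions. Then $i_1$ and $i_2$ are ls-continuous and $i_1 f = i_2 f$ as set maps, so $[i_1][f] = [i_2][f]$. The crux — and the technical heart of the argument — is showing $i_1 \not\sim i_2$ whenever $f$ fails to be coarsely surjective: one argues by induction on the generation of the large scale structure on $Z$ that any pair $\{i_1(y), i_2(y)\}$ sitting inside an element of a uniformly bounded family on $Z$ must be produced by a finite chain alternating between the two copies via points of $f(X)$, so $y$ lies within a bounded star-neighborhood of $f(X)$ in $Y$; but coarse non-surjectivity provides points $y$ outside every such neighborhood, preventing any such witness from existing.
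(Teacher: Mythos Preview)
Your argument is correct and, since the paper does not actually prove the lemma (it cites \cite{DikranjanZava} and says the first two items are ``an easy adaptation'' of the proof there, with the third following from earlier remarks), you have supplied considerably more detail than the paper itself. The cokernel-pair construction for the epimorphism converse and the test-space construction for the monomorphism converse are the standard route, and presumably what is done in \cite{DikranjanZava}.

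Two small points deserve correction. First, in the forward direction of (1) you write that the iterated star ``yields a uniformly bounded family refining $\{\{\alpha(y),\beta(y)\}\}$''; you mean \emph{coarsening} (equivalently, a family that $\{\{\alpha(y),\beta(y)\}\}$ refines). Second, in the converse of (2) your choice of a countable family $\{\{x_n,x'_n\}:n\in\NN\}$ tacitly assumes the large scale structure on $X$ is countably generated; in general one cannot extract a countable witnessing family from the mere fact that $f^{-1}(\VV)$ is not uniformly bounded. The fix is routine: take as test space the set $W=\{(x,x')\in X\times X: \exists\,V\in\VV,\ f(x),f(x')\in V\}$ with the minimal large scale structure and let $\alpha,\beta$ be the two projections. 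Then $f\alpha\sim f\beta$ is witnessed by $\VV$, while $\alpha\sim\beta$ would force the family of all pairs from sets $f^{-1}(V)$ to be uniformly bounded, which (after one application of the star axiom) would force $f^{-1}(\VV)$ itself to be uniformly bounded.

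Your sketch for the epimorphism converse is accurate: the induction shows that every element of a uniformly bounded family on $Z=Y\sqcup_{f(X)}Y$ is a finite chain of images $i_1(V)$ and $i_2(V')$ with $V,V'$ drawn from a fixed uniformly bounded family on $Y$, and any such chain containing both an $i_1$-point and an $i_2$-point outside $f(X)$ must cross $f(X)$; projecting the chain to $Y$ puts $y$ inside an iterated star of $f(X)$ against that fixed family, contradicting coarse non-surjectivity.
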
 

The coarse category (as defined in this paper) admits binary products. For two large scale spaces $X$ and $Y$, their product is the set $X \times Y$ equipped with the large scale structure consisting of all refinements of families of the form
$$
\mathcal{U} \times \mathcal{V} = \{ U\times V \mid U \in \mathcal{U},\ V \in \mathcal{V} \}
$$
for uniformly bounded families $\mathcal{U}$ in $X$ and $\mathcal{V}$ in $Y$.

\section{Group actions and $X_G$}
Given a set $X$ and a collection $\mathcal{X}$ of families of subsets  of $X$, we can consider the smallest large scale structure on $X$ containing $\mathcal{X}$, which we call the large scale structure \textbf{generated by $\mathcal{X}$}. Two possible constructions exist for this large scale structure, which we denote by $\overline{\mathcal{X}}$. One can either
\begin{itemize}
\item take the intersection of all large scale structures containing $\mathcal{X}$ (this is based on Proposition 2.12 in \cite{Roe}),
\item add the cover by singletons to $\mathcal{X}$ if necessary, close $\mathcal{X}$ under the star operation and then close the resulting collection under refinement.
\end{itemize}

\begin{Lemma} \label{generatingMap}
Let $f: X \rightarrow Y$ be a map between sets and let $\mathcal{X}$ be a collection of families of subsets of $X$. Then
$$
f(\overline{\mathcal{X}}) \subseteq \overline{f(\mathcal{X})}
$$
where $f(\mathcal{X}) = \{ f(\mathcal{U}) \mid \mathcal{U} \in \mathcal{X} \}$.
\end{Lemma}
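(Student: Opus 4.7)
The plan is to use the second (constructive) description of $\overline{\mathcal{X}}$ given in the excerpt: start with $\mathcal{X}$ together with the cover by singletons, close under the binary star operation, and then close under refinement. I will verify by induction on this construction that if a family $\mathcal{B}$ is obtained in $\overline{\mathcal{X}}$, then $f(\mathcal{B})$ belongs to $\overline{f(\mathcal{X})}$.

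The key lemma underlying the whole argument is that $f$ interacts nicely with stars. Specifically, for any $B \subseteq X$ and any family $\mathcal{V}$ of subsets of $X$, if $V \in \mathcal{V}$ meets $B$ then $f(V)$ meets $f(B)$, so
\[
f(\st(B, \mathcal{V})) \;\subseteq\; \st(f(B), f(\mathcal{V})).
\]
Taking this over all $B \in \mathcal{U}$ shows that $f(\st(\mathcal{U}, \mathcal{V}))$ refines $\st(f(\mathcal{U}), f(\mathcal{V}))$. A similar and even easier observation is that $f$ preserves refinement: if $\mathcal{B}' \leq \mathcal{B}$ then each non-singleton element of $f(\mathcal{B}')$ is (contained in) the image of an element of $\mathcal{B}'$, which lies in some element of $\mathcal{B}$, whose image is in $f(\mathcal{B})$. (Any element of $f(\mathcal{B}')$ that collapses to a singleton is covered by the singleton cover in $\overline{f(\mathcal{X})}$, which is why the singleton cover has to be thrown in at the start.)

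With these two facts, the induction is straightforward. The base case handles the generators: families in $\mathcal{X}$ map into $f(\mathcal{X}) \subseteq \overline{f(\mathcal{X})}$, and the singleton cover of $X$ maps into the singleton cover of $Y$, which is in $\overline{f(\mathcal{X})}$. For the inductive step, if $\mathcal{B}_1,\mathcal{B}_2$ are already known to satisfy $f(\mathcal{B}_i) \in \overline{f(\mathcal{X})}$, then by the star lemma $f(\st(\mathcal{B}_1,\mathcal{B}_2))$ refines $\st(f(\mathcal{B}_1), f(\mathcal{B}_2))$, and the latter lies in $\overline{f(\mathcal{X})}$ since $\overline{f(\mathcal{X})}$ is closed under the star operation; condition (1) of a large scale structure (closure under refinement) then places $f(\st(\mathcal{B}_1,\mathcal{B}_2))$ in $\overline{f(\mathcal{X})}$. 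The case of passing to a refinement is handled the same way, using only condition (1).

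I do not anticipate a real obstacle here; the only subtlety is the bookkeeping around singletons, since $f$ may collapse a two-point subset to a point and thereby turn a non-singleton set into a singleton. This is exactly why the construction of $\overline{\mathcal{X}}$ adds the singleton cover before closing under star and refinement, and it makes the condition (1) argument go through cleanly without any injectivity hypothesis on $f$.
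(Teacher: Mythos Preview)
Your proposal is correct and is essentially the same argument as the paper's. The paper's proof simply states the key fact $f(\st(\mathcal{U},\mathcal{V})) \leq \st(f(\mathcal{U}), f(\mathcal{V}))$ and leaves the induction on the second construction of $\overline{\mathcal{X}}$ implicit; you have just spelled out the inductive bookkeeping (including the singleton subtlety) in more detail.
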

\begin{proof}
Using the second construction above, this follows from the fact that for any two families $\mathcal{U}$ and $\mathcal{V}$ of subsets of $X$, 
$$
f(\st(\mathcal{U}, \mathcal{V})) \leq \st(f(\mathcal{U}), f(\mathcal{V})).
$$
\end{proof}

Let $X$ be a large scale space. By an \textbf{action of a group $G$ on $X$ by coarse equivalences} we mean an action of $G$ on the underlying set of $X$ such that every $g \in G$ acts as a large scale continuous map. 

\begin{Definition}\label{defXG}
Let $X$ be a large scale space with large scale structure $\mathcal{X}$ and let $G$ be a group acting on $X$ by coarse equivalences. Let $\mathcal{X}_G$ be the large scale structure on $X$ generated by $\mathcal{X}$ together with all families of the form
$$\{ \{x, gx\} \mid x \in X \} $$
for some $g \in G$. We denote the set $X$ together with the large scale structure $\mathcal{X}_G$ by $X_G$, and we denote by $p_G: X \rightarrow X_G$ the identity set map.
\end{Definition}

One way to view $X_G$ is that it is the underlying set of $X$ equipped with the smallest large scale structure which makes the action of each $g\in G$ close to the identity. Since the term coarse quotient already exists in the literature \cite{Zhang}, and the map $p_G: X \rightarrow X_G$ is not in general a coarse quotient mapping under the definition found in \cite{Zhang}, we will rarely refer to $X_G$ as ``the coarse quotient of $X$ by $G$'' in this paper, instead opting to use the notation $X_G$ to make the distinction clear.

\begin{Remark}\label{XGrem1}
It is easy to check that for any finite subset $F \subseteq G$, the family
$$
\{F\cdot x \mid x \in X \} = \{ \{ f \cdot x \mid f \in F \} \mid x \in X \}
$$
is uniformly bounded in $X_G$. In particular, let $G$ be a group and $|G|$ the underlying set of $G$ with either the large scale structure consisting only of families of singletons or the smallest coarsely connected large scale structure. Let $G$ act on $|G|$ by right translation, that is, $g \cdot h = hg^{-1}$. Then $|G|_G$ is $G$ with the large scale structure coming from the group structure (see Example \ref{exgroup}).
\end{Remark}

\begin{Lemma}\label{XGUF}
Let $(X, \mathcal{X})$ be a large scale space and let $G$ be a group acting on $X$ by coarse equivalences. Then the large scale structure on $X_G$ is precisely the collection $\mathcal{X}'$ of refinements of families of the form $\st(\mathcal{U}, \mathcal{F})$, where $\mathcal{U}$ is a uniformly bounded family in $X$ and $\mathcal{F}$ is of the form
$$
 \{ F \cdot x \mid x \in X \}.
$$
for some finite subset $F \subseteq G$.
\end{Lemma}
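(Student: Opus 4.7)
The plan is to prove the two inclusions $\mathcal{X}'\subseteq \mathcal{X}_G$ and $\mathcal{X}_G\subseteq \mathcal{X}'$ separately. The first is immediate: any $\mathcal{U}\in \mathcal{X}$ is in $\mathcal{X}_G$ by construction, and any family of the form $\mathcal{F}=\{F\cdot x\mid x\in X\}$ with $F\subseteq G$ finite is uniformly bounded in $X_G$ by Remark~\ref{XGrem1}, so $\st(\mathcal{U},\mathcal{F})\in \mathcal{X}_G$ by axiom (2) of the large scale structure, and then refinements stay in $\mathcal{X}_G$ by axiom (1).

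For the reverse inclusion, I would show that the collection $\mathcal{X}'$ is itself a large scale structure containing every generator of $\mathcal{X}_G$; minimality then forces $\mathcal{X}_G\subseteq \mathcal{X}'$. Each $\mathcal{U}\in \mathcal{X}$ already has the form $\st(\mathcal{U},\mathcal{F})$ with $F=\{e\}$ (so $\mathcal{F}$ is the cover by singletons), and for each $g\in G$ the family $\{\{x,gx\}\mid x\in X\}$ is $\mathcal{F}$ with $F=\{e,g\}$ and hence refines $\st(\{\{x\}\mid x\in X\},\mathcal{F})$; thus both types of generators lie in $\mathcal{X}'$. Closure of $\mathcal{X}'$ under refinement is built into its definition, so the remaining, and principal, step is closure under the star operation.

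For the star axiom, by passing to a common refining representative it suffices to treat $\mathcal{V}_i=\st(\mathcal{U}_i,\mathcal{F}_i)$, where $\mathcal{F}_i=\{F_i\cdot x\mid x\in X\}$ for finite $F_i\subseteq G$. A direct set-theoretic computation gives $\st(U,\mathcal{F}_i)=(F_iF_i^{-1})\cdot U$, so, writing $\widetilde F_i:=F_iF_i^{-1}$ (finite and symmetric), one has $\mathcal{V}_i=\{\widetilde F_i\cdot U_i\mid U_i\in \mathcal{U}_i\}$, and a similar expansion yields
\[
\st(\widetilde F_1 U_1,\mathcal{V}_2)\;=\;\widetilde F_2\cdot \st(K\cdot U_1,\mathcal{U}_2),\qquad K:=\widetilde F_2\widetilde F_1.
\]
Using the equivariance $\st(kU_1,\mathcal{U}_2)=k\cdot\st(U_1,k^{-1}\mathcal{U}_2)$ for $k\in K$, together with the fact that every $k^{-1}\in G$ acts as a large scale continuous map on $X$, the family $\mathcal{W}:=\bigcup_{k\in K}k^{-1}(\mathcal{U}_2)$ is uniformly bounded in $X$; set $\mathcal{U}':=\st(\mathcal{U}_1,\mathcal{W})\in \mathcal{X}$ and $F':=(\widetilde F_2 K)\cup\{e\}$. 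For $U_1\in \mathcal{U}_1$, writing $U'_1=\st(U_1,\mathcal{W})\in \mathcal{U}'$, one then obtains $\st(\widetilde F_1 U_1,\mathcal{V}_2)\subseteq (\widetilde F_2 K)\cdot U'_1\subseteq (F'F'^{-1})\cdot U'_1$, so $\st(\mathcal{V}_1,\mathcal{V}_2)$ refines $\st(\mathcal{U}',\{F'\cdot x\mid x\in X\})\in \mathcal{X}'$, finishing the closure check. The hard part is precisely this orbit bookkeeping: absorbing the star against $\mathcal{U}_2$ into a single uniformly bounded family of $X$ is what forces the use of large scale continuity of every element of $G$ appearing in the products $\widetilde F_2\widetilde F_1$.
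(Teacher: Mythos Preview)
Your proof is correct and follows the same strategy as the paper's: verify that $\mathcal{X}'$ contains the generators of $\mathcal{X}_G$ and is itself a large scale structure, the key step being closure under stars via large scale continuity of the $G$-action on finitely many translates of $\mathcal{U}_2$. The only cosmetic difference is organizational: the paper reduces $\st(\st(\mathcal{U}_1,\mathcal{F}_1),\st(\mathcal{U}_2,\mathcal{F}_2))$ via the bound $\st(\st(\st(\st(\mathcal{U}_1,\mathcal{F}_1),\mathcal{F}_2),\mathcal{U}_2),\mathcal{F}_2)$ and handles ``star against an $\mathcal{F}$'' and ``star against a $\mathcal{U}$'' as two separate sub-lemmas, whereas you expand everything directly using the identity $\st(U,\mathcal{F})=\widetilde F\cdot U$.
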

\begin{proof}
Since $\mathcal{X}'$ contains $\mathcal{X}$ and all families of the form $\{ \{x, gx\} \mid x \in X \}$, it is enough to show that $\mathcal{X}'$ is a large scale structure, that is, closed under stars. Since
$$
\st(\st(\mathcal{U}_1, \mathcal{F}_1), \st(\mathcal{U}_2, \mathcal{F}_2)) \leq 
\st(\st(\st(\st(\mathcal{U}_1, \mathcal{F}_1), \mathcal{F}_2), \mathcal{U}_2), \mathcal{F}_2)
$$
it is enough to prove that both $\st(\st(\mathcal{U}_1, \mathcal{F}_1), \mathcal{F}_2)$ and $\st(\st(\mathcal{U}_1, \mathcal{F}_1), \mathcal{U}_2)$ are in $\mathcal{X}'$ for any $\mathcal{U}_1, \mathcal{U}_2 \in \mathcal{X}$ and any two families
$$
\mathcal{F}_1 = \{ F_1 \cdot x \mid x \in X \},\ \mathcal{F}_2 = \{ F_2 \cdot x \mid x \in X \}.
$$
where $F_1$ and $F_2$ are finite subsets of $G$. For the first, we have
$$
\st(\st(\mathcal{U}_1, \mathcal{F}_1), \mathcal{F}_2) \leq 
\st(\mathcal{U}_1, \mathcal{F}_3)
$$
where 
$$
\mathcal{F}_3 = \{ F_1 \cdot F_1^{-1} \cdot F_2 \cdot x \mid x \in X \}.
$$
For the second one, let $\mathcal{V}$ be the element of $\mathcal{X}$ (using the fact that $F_1$ is finite)
$$
\{ g \cdot U \mid U \in \mathcal{U}_2 \wedge g \in F_1 \cdot F_1^{-1} \}.
$$
Then
$$
\st(\st(\mathcal{U}_1, \mathcal{F}_1), \mathcal{U}_2) \leq 
\st(\st(\mathcal{U}_1, \mathcal{V}), \mathcal{F}_1)
$$
which gives the required result.
\end{proof}

Some readers may prefer to use Lemma \ref{XGUF} as the definition of the large scale structure on $X_G$. Definition \ref{defXG} emphasises the universal property of $X_G$, whereas in practice the one provided by the lemma is most useful. We now briefly consider the case when $G$ is finite. In particular, we show that $X_G$ is the same, up to coarse equivalence, as $X/G$ with an appropriate large scale structure.

\begin{Lemma}\label{Gfinitestrong}
Let $(X, \mathcal{X})$ be a large scale space and let $G$ be a finite group acting on $X$ by coarse equivalences. Let $q: X \rightarrow X/G$ be the quotient map onto the orbit space, and let $q(\mathcal{X})$ be the collection of all images of elements of $\mathcal{X}$ under $q$. Then $q(\mathcal{X})$ is a large scale structure.
\end{Lemma}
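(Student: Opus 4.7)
The plan is to verify directly the two axioms of a large scale structure for $q(\mathcal{X})$. The closure under refinement (axiom (1)) will be handled by a straightforward lifting argument, and the closure under the star operation (axiom (2)) will rely on an explicit identity expressing stars in $X/G$ as $q$-images of stars in $X$. Finiteness of $G$ is used only in the latter step.

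For axiom (1), suppose $\mathcal{B}_1 = q(\mathcal{U})$ for some $\mathcal{U} \in \mathcal{X}$ and that each non-singleton element of $\mathcal{B}_2$ is contained in some element of $\mathcal{B}_1$. I would lift $\mathcal{B}_2$ back to $X$: for each non-singleton $B \in \mathcal{B}_2$, pick $U_B \in \mathcal{U}$ with $B \subseteq q(U_B)$ and choose a preimage in $U_B$ of each point of $B$, producing $V_B \subseteq U_B$ with $q(V_B) = B$; for each singleton $\{b\}$ just pick any preimage $x$ of $b$ and let $V_{\{b\}} = \{x\}$. Then $\mathcal{V} = \{V_B \mid B \in \mathcal{B}_2\}$ has the property that each non-singleton element is contained in some element of $\mathcal{U}$, so $\mathcal{V} \in \mathcal{X}$, and by construction $q(\mathcal{V}) = \mathcal{B}_2$.

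For axiom (2), the key observation is that $q(U_1) \cap q(U_2) \neq \emptyset$ exactly when there exists $g \in G$ with $g \cdot U_2 \cap U_1 \neq \emptyset$. Setting $G \cdot \mathcal{U}_2 := \{g \cdot U \mid g \in G,\ U \in \mathcal{U}_2\}$ and using that $q$ is constant on orbits, this yields the identity
$$
\st(q(U_1), q(\mathcal{U}_2)) = q(\st(U_1, G \cdot \mathcal{U}_2))
$$
for each $U_1 \in \mathcal{U}_1$, and hence $\st(q(\mathcal{U}_1), q(\mathcal{U}_2)) = q(\st(\mathcal{U}_1, G \cdot \mathcal{U}_2))$. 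Since $G$ is finite and each element of $G$ acts as a large scale continuous map, $G \cdot \mathcal{U}_2 = \bigcup_{g \in G} g \cdot \mathcal{U}_2$ is a finite union of uniformly bounded families and thus uniformly bounded, so $\st(\mathcal{U}_1, G \cdot \mathcal{U}_2) \in \mathcal{X}$. Therefore $\st(q(\mathcal{U}_1), q(\mathcal{U}_2)) \in q(\mathcal{X})$.

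The main obstacle is the bookkeeping for axiom (2), and in particular the star identity above. This is also the only point in the argument that genuinely uses finiteness of $G$: for infinite $G$, the family $G \cdot \mathcal{U}_2$ need not be uniformly bounded in $X$, which is precisely why the naive orbit-space construction fails for infinite groups and the more subtle space $X_G$ of Definition \ref{defXG} is needed.
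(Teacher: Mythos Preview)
Your proof is correct and follows essentially the same route as the paper: the key step in both is to $G$-saturate a uniformly bounded family so that the star operation commutes with $q$, using finiteness of $G$ to ensure the saturated family remains uniformly bounded. The only differences are cosmetic --- you saturate just one of the two families (yielding $\st(q(\mathcal{U}_1), q(\mathcal{U}_2)) = q(\st(\mathcal{U}_1, G\cdot\mathcal{U}_2))$ directly), whereas the paper saturates both and then implicitly uses closure under refinement; you also verify axiom~(1) explicitly, which the paper omits.
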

\begin{proof}
Let $\mathcal{U}, \mathcal{V} \in \mathcal{X}$. Let $\mathcal{U}' = \bigcup_{g \in G} g(\mathcal{U})$ and $\mathcal{V}' = \bigcup_{g \in G} g(\mathcal{V})$ respectively, and note that $\mathcal{U}'$ and $\mathcal{V}'$ are each in $\mathcal{X}$ because $G$ is finite. Then $\mathcal{U}$ and $\mathcal{V}$ refine $\mathcal{U}'$ and $\mathcal{V}'$ respectively, and one can check that
$$
\st(q(\mathcal{U}'), q(\mathcal{V}'))  = q(\st(\mathcal{U}', \mathcal{V}')) \in q(\mathcal{X}).
$$
\end{proof}

\begin{Proposition}\label{finiteG}
Let $(X, \mathcal{X})$ be a large scale space and let $G$ be a finite group acting on $X$ by coarse equivalences. Let $X/G$ be the orbit space with the large scale structure $q(\mathcal{X})$, where $q$ is the quotient map $q: X \rightarrow X/G$. Then the natural map $p: X_G \rightarrow X/G$ is a coarse equivalence.
\end{Proposition}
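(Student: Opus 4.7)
The plan is to verify directly that the set map $p$, which is the ordinary quotient map $q: X \to X/G$ viewed as a map out of $X_G$, is large scale continuous, coarsely surjective, and a coarse embedding. Surjectivity is free since $q$ is an onto set map, so the real content is checking that the large scale structure $\mathcal{X}_G$ on the domain and $q(\mathcal{X})$ on the codomain line up correctly under $q$.

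For large scale continuity, I would use Lemma \ref{XGUF}: every uniformly bounded family in $X_G$ refines one of the form $\st(\mathcal{U}, \mathcal{F})$ where $\mathcal{U} \in \mathcal{X}$ and $\mathcal{F} = \{F \cdot x \mid x \in X\}$ for some finite $F \subseteq G$. Since for finite $G$ the map $q$ collapses each orbit to a point, I would compute $q(\st(U, \mathcal{F})) = q(U)$ (the forward inclusion uses that $y \in \st(U, \mathcal{F})$ meets $U$ in the same orbit; the reverse uses that $\mathcal{F}$ is a cover). Hence $p$ sends $\st(\mathcal{U}, \mathcal{F})$ to $q(\mathcal{U}) \in q(\mathcal{X})$, giving ls-continuity.

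For the coarse embedding property, the key computation is in the other direction. Given a uniformly bounded family $\mathcal{V}$ in $X/G$, it refines $q(\mathcal{U})$ for some $\mathcal{U} \in \mathcal{X}$, so $p^{-1}(\mathcal{V})$ refines $q^{-1}(q(\mathcal{U}))$. I would observe that
$$
q^{-1}(q(U)) = G \cdot U = \st(U, \mathcal{G}),
$$
where $\mathcal{G} = \{G \cdot x \mid x \in X\}$; this is exactly the type of family appearing in Lemma \ref{XGUF} because $G$ is finite, so $\st(\mathcal{U}, \mathcal{G})$ is uniformly bounded in $X_G$. Thus $p^{-1}(\mathcal{V})$ is uniformly bounded in $X_G$, finishing the coarse embedding.

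I do not anticipate a serious obstacle here: the argument is essentially two set-theoretic identities ($q(\st(U,\mathcal{F})) = q(U)$ and $q^{-1}(q(U)) = \st(U,\mathcal{G})$) together with an appeal to Lemma \ref{XGUF}. The only place where care is needed is in Lemma \ref{Gfinitestrong}, which has already been set up to ensure $q(\mathcal{X})$ is closed under stars, so we are free to use it without worrying whether $q(\mathcal{U})$ is a legitimate uniformly bounded family on the codomain side. Finiteness of $G$ enters in exactly one essential way: it makes $\mathcal{G}$ a legitimate $\mathcal{F}$-type family in Lemma \ref{XGUF}, which is precisely what fails in the general case and motivates the introduction of $X_G$ in the first place.
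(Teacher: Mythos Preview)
Your proposal is correct and follows essentially the same route as the paper: verify surjectivity, large scale continuity, and the coarse embedding property directly, with the latter reducing to the observation that $q^{-1}(q(U))$ refines $\st(U,\mathcal{G})$ for the finite-orbit family $\mathcal{G}$. The only cosmetic difference is in the ls-continuity step: the paper invokes Lemma~\ref{generatingMap} (the generators $\{\{x,gx\}\}$ of $\mathcal{X}_G$ map to singletons under $q$, so $p(\mathcal{X}_G)\subseteq\overline{q(\mathcal{X})}=q(\mathcal{X})$), whereas you go through the explicit description of $\mathcal{X}_G$ from Lemma~\ref{XGUF} and compute $q(\st(U,\mathcal{F}))=q(U)$; both arguments amount to the same thing.
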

\begin{proof}
Since the image of every family $ \{ \{x, gx\} \mid x \in X \} $ under $q$ is a family of singletons, the map $p$ is a large scale continuous map by Lemma \ref{generatingMap}, which is moreover surjective. It remains to show it is a coarse embedding. By Proposition \ref{Gfinitestrong}, every uniformly bounded family $\mathcal{U}$ in $X/G$ is the image under $q$ of a uniformly bounded family $\mathcal{U}'$ in $X$. Then 
$$
q^{-1}(\mathcal{U}) \leq \st(\mathcal{U}', \mathcal{G})
$$
where $\mathcal{G} = \{ \{x, gx\} \mid g \in G,\ x \in X \}$. The family $\st(\mathcal{U}', \mathcal{G})$ is uniformly bounded in $X_G$ because $G$ is finite, which completes the proof.
\end{proof}

\begin{Proposition}\label{obsfinitemetric}
Let $X$ be a metric space and $G$ a finite group acting on $X$ by coarse equivalences. Then the large scale structure $q(\mathcal{X})$ on the orbit space $X/G$, where $q$ is the quotient map $q: X \rightarrow X/G$ and $\mathcal{X}$ is the large scale structure on $X$, is induced by the Hausdorff metric on orbits, that is, 
$$d_{\mathsf{Haus}}([x],[y]) = \min \{ \sup_{x' \in [x]} \inf_{y' \in [y]}  d(x', y'), \sup_{y' \in [y]} \inf_{x' \in [x]} d(x', y') \}.$$
Suppose further than $G$ acts by isometries. Then $q(\mathcal{X})$ is also induced by
$$d_{\mathsf{min}}([x],[y]) = \min \{ d_X(x',y') \mid x' \in [x], y' \in [y] \} = \min \{d_X(x, g\cdot y) \mid g \in G \}.$$
\end{Proposition}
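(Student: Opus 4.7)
The plan is to verify, in each case, that $q(\mathcal{X})$ coincides with the collection of families of bounded mesh under the displayed function. A key observation used throughout is that since $G$ is finite and each $g \in G$ acts as a large scale continuous map, the quantity $S(R) := \max_{g \in G} S_g(R)$ is finite, where $S_g(R)$ is the continuity modulus of $g$ at scale $R$.

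For the Hausdorff claim the first step is to verify that $q \colon (X,d) \to (X/G, d_{\mathsf{Haus}})$ is large scale continuous, which gives the inclusion $q(\mathcal{X}) \subseteq \{\mathcal{U} : \mesh_{d_{\mathsf{Haus}}}(\mathcal{U}) < \infty\}$. Indeed, if $d(x,y) \le R$, then for any $gx \in [x]$ we have $gy \in [y]$ with $d(gx,gy) \le S(R)$, so $\inf_{y' \in [y]} d(gx,y') \le S(R)$; taking the supremum over $x' \in [x]$ yields $d_{\mathsf{Haus}}([x],[y]) \le S(R)$. For the reverse inclusion, given $\mathcal{U}$ of $d_{\mathsf{Haus}}$-mesh at most $R$, I would fix for each $U \in \mathcal{U}$ a basepoint $[x_U] \in U$ with a chosen representative $x_U \in X$ and construct, for every $[y] \in U$, a representative $y_{[y]} \in [y]$ with $d(x_U, y_{[y]}) \le S(R)$. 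The construction splits into two subcases according to which of the $\sup\inf$ terms realizes the min in $d_{\mathsf{Haus}}([x_U],[y]) \le R$: if $\sup_{x'} \inf_{y'} d \le R$ one sets $x' = x_U$ and obtains $y_{[y]}$ directly; otherwise, picking any $y_0 \in [y]$, one finds $gx_U \in [x_U]$ with $d(gx_U, y_0) \le R$ and sets $y_{[y]} := g^{-1} y_0$, using large scale continuity of $g^{-1}$ to bound $d(x_U, g^{-1}y_0)$ by $S(R)$. The set $V_U := \{x_U\} \cup \{y_{[y]} : [y] \in U\}$ then has diameter at most $2S(R)$, the family $\mathcal{U}' := \{V_U : U \in \mathcal{U}\}$ lies in $\mathcal{X}$, and by construction $q(\mathcal{U}') = \mathcal{U}$.

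For the isometric part, the elementary inequality $d_{\mathsf{min}} \le d_{\mathsf{Haus}}$ (since $\inf \le \sup \inf$) shows that the $d_{\mathsf{Haus}}$-induced structure is contained in the $d_{\mathsf{min}}$-induced structure, so the Hausdorff result already gives $q(\mathcal{X}) \subseteq \{\mathcal{U} : \mesh_{d_{\mathsf{min}}}(\mathcal{U}) < \infty\}$. For the reverse, given $\mathcal{U}$ of $d_{\mathsf{min}}$-mesh at most $R$, the bound $d_{\mathsf{min}}([x_U],[y]) \le R$ supplies $gx_U \in [x_U]$ and $y' \in [y]$ with $d(gx_U, y') \le R$; applying the isometry of $g^{-1}$ gives $d(x_U, g^{-1}y') \le R$, so $y_{[y]} := g^{-1}y'$ works, and the remainder of the construction is identical. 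I expect the two-case analysis coming from the min in the definition of $d_{\mathsf{Haus}}$ to be the main obstacle, since Case B requires a slightly indirect argument (move a representative of $[y]$ back toward $x_U$ via the group action rather than producing it directly); the rest is essentially bookkeeping.
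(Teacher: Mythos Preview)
Your argument is correct. The inclusion $q(\mathcal{X})\subseteq\{\mathcal U:\mesh_{d_{\mathsf{Haus}}}(\mathcal U)<\infty\}$ is handled exactly as in the paper. For the reverse inclusion you work harder than necessary: the paper simply observes that $d_{\mathsf{Haus}}([x],[y])\le R$ forces some $x'\in[x]$, $y'\in[y]$ with $d_X(x',y')\le R$ (no anchoring at a fixed $x_U$, hence no case split), so every \emph{pair} $\{[x],[y]\}$ of $d_{\mathsf{Haus}}$-distance $\le R$ lies in $q(\mathcal B_R)$; since $q(\mathcal X)$ is already a large scale structure (Lemma~\ref{Gfinitestrong}), starring the singleton cover against this family of pairs shows that the cover by $d_{\mathsf{Haus}}$-balls of radius $R$, and hence any $\mathcal U$ of $d_{\mathsf{Haus}}$-mesh $\le R$, lies in $q(\mathcal X)$. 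Your explicit lift $V_U$ with the Case~A/Case~B analysis is a perfectly valid alternative --- it trades the appeal to the star axiom for a concrete preimage family --- and the ``indirect'' Case~B step (moving $y_0$ back via $g^{-1}$) is exactly the right idea. The isometric part is treated identically in both approaches.
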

\begin{proof}
If $d_{\mathsf{Haus}}([x],[y]) \leq R$, then there are $x' \in [x], y' \in [y]$ such that $d_X(x',y') \leq R$, so clearly every cover which is uniformly bounded with respect to $d_{\mathsf{Haus}}$ is contained in $q(\mathcal{X})$. On the other hand, if $d_X(x,y) \leq R$ for $x,y \in X$, then there is an $S > 0$ depending only on $R$ such that $d_X(g \cdot x, g\cdot y) \leq S$ for all $g \in G$. It follows that $d_{\mathsf{Haus}}([x],[y]) \leq S$, so every element of $q(\mathcal{X})$ is uniformly bounded with respect to $d_{\mathsf{Haus}}$. If $G$ acts by isometries, then $d_{\mathsf{min}}([x],[y])$ is indeed a metric, and it is easy to see that it induces $q(\mathcal{X})$. 
\end{proof}

We recall the definition of coarsely light map from \cite{WeighillDydakMonotone}. For $A$ a subset of a large scale space $X$ and $\mathcal{U}$ a cover of $X$, let $A_{\mathcal{U}}$ be the set of equivalence classes of $A$ under the equivalence relation: $x \sim x'$ if and only if there exists a finite sequence $x_0, \ldots, x_n$ of elements of $A$ with $x \mathcal{U} x_0$, $x_n \mathcal{U} x'$ and $x_i \mathcal{U} x_{i+1}$ for all $0 \leq i < n$. A large scale continuous map $f: X \rightarrow Y$ is called \textbf{coarsely light} if for every pair of uniformly bounded covers $\mathcal{V}$ of $Y$ and $\mathcal{U}$ of $X$, the family of subsets
$$
\bigcup_{V \in \mathcal{V}} f^{-1}(V)_\mathcal{U}
$$
is uniformly bounded in $X$. Equivalently, $f$ is coarsely light if for every uniformly bounded family $\mathcal{V}$ in $Y$, the family of subsets $f^{-1}(\mathcal{V})$ has asymptotic dimension zero uniformly (see \cite{WeighillDydakMonotone} for details).

\begin{Proposition}\label{XGlight}
Let $(X, \mathcal{X})$ be a large scale space and let $G$ be a group acting on $X$ by coarse equivalences. Then the identity set map $p_G: X \rightarrow X_G$ is coarsely light.
\end{Proposition}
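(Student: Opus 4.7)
My plan is to unpack the definition of coarsely light and reduce it to a uniform boundedness statement about chain components, then exploit the explicit description of uniformly bounded families in $X_G$ given by Lemma \ref{XGUF}.

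Since $p_G$ is the identity on the underlying set, $p_G^{-1}(V) = V$ for every $V$, so I must show that for any uniformly bounded family $\mathcal{V}$ in $X_G$ and any uniformly bounded family $\mathcal{U}$ in $X$, the family $\bigcup_{V \in \mathcal{V}} V_{\mathcal{U}}$ is uniformly bounded in $X$. By Lemma \ref{XGUF}, I may assume $\mathcal{V}$ refines $\st(\mathcal{U}', \mathcal{F})$ for some $\mathcal{U}' \in \mathcal{X}$ and $\mathcal{F} = \{F \cdot x \mid x \in X\}$ with $F \subseteq G$ finite. Since chain components in a subset refine those in a containing set, it suffices to treat $V = \st(U', \mathcal{F})$ for $U' \in \mathcal{U}'$, and a direct calculation gives $V \subseteq H \cdot U'$ where $H := FF^{-1}$ is a finite subset of $G$.

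The key structural observation is that $V$ lies in the union of at most $|H|$ ``patches'' $hU'$ for $h \in H$. Because each $h \in H$ acts as a coarse equivalence, the family $\mathcal{U}'' := \bigcup_{h \in H} h(\mathcal{U}')$ is a finite union of uniformly bounded families, hence itself uniformly bounded in $X$; moreover each patch $hU'$ is an element of $\mathcal{U}''$. Let $\mathcal{U}^* := \mathcal{U} \cup \mathcal{U}''$, still uniformly bounded in $X$. Any two points of $V$ lying in a common patch are $\mathcal{U}^*$-related via that patch, so an arbitrary $\mathcal{U}$-chain in $V$ can be converted into a $\mathcal{U}^*$-chain that uses only one $\mathcal{U}''$-step per visited patch (``teleporting'' within it) and one $\mathcal{U}$-step per transition between distinct patches. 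Since only $|H|$ patches are available, any point in the chain component of a fixed base point is reachable from that base point in at most $N := 2|H|$ such $\mathcal{U}^*$-steps.

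Consequently, every chain component $C \in V_{\mathcal{U}}$ is contained in an element of the family $\mathcal{W}$ obtained by iterating $\st(-, \mathcal{U}^*)$ a total of $N$ times starting from $\mathcal{U}''$; finitely many applications of the star-closure axiom show $\mathcal{W}$ is uniformly bounded in $X$, which gives the required uniform bound. The main obstacle is converting an arbitrarily long $\mathcal{U}$-chain in $V$ into a $\mathcal{U}^*$-chain of length bounded in terms of $|H|$ only; this hinges on the observation that within-patch movement becomes free once $\mathcal{U}''$ is adjoined to $\mathcal{U}$, so the finite number of patches pins down the number of required star iterations.
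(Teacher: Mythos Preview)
Your argument is correct and follows essentially the same route as the paper's proof: both observe that an element of $\st(\mathcal{U}',\mathcal{F})$ sits inside a union of at most $|FF^{-1}|$ translates $hU'$, each belonging to a fixed uniformly bounded family in $X$, and then bound the $\mathcal{U}$-components by an iterated star whose depth depends only on $|FF^{-1}|$. The only cosmetic difference is that the paper, instead of enlarging $\mathcal{U}$ to $\mathcal{U}^* = \mathcal{U}\cup\mathcal{U}''$, simply assumes without loss of generality that the given $\mathcal{U}$ already coarsens the translate family (your $\mathcal{U}''$), which is legitimate since passing to a coarser $\mathcal{U}$ only enlarges the chain components. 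Your phrasing ``one $\mathcal{U}$-step per transition between distinct patches'' implicitly uses that a walk in a graph on at most $|H|$ vertices can be shortened to a simple path with at most $|H|-1$ edges; this is standard, and the paper leaves the analogous step at the same level of detail.
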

\begin{proof}
Let $\st(\mathcal{V}, \mathcal{F})$ be a uniformly bounded cover of $X_G$, where $\mathcal{V}$ is a uniformly bounded cover of $X$ and $\mathcal{F} = \{ F \cdot x \mid x \in X \}$ for a finite subset $F \subseteq G$ containing the identity (see Lemma \ref{XGUF}). Since for any $\st(V, \mathcal{F}) \in \st(\mathcal{V}, \mathcal{F})$, 
$$
\st(V, \mathcal{F}) \subseteq \bigcup\limits_{g \in F \cdot F^{-1}} g \cdot V,
$$
it follows that every element of $\st(\mathcal{V}, \mathcal{F})$ is contained in a union of $n = |F \cdot F^{-1}|$ elements of the family $\mathcal{V}' = \bigcup_{g \in F \cdot F^{-1}} g \cdot \mathcal{V}$, which is uniformly bounded in $X$. Let $\mathcal{U}$ be a uniformly bounded cover in $X$, which we may assume coarsens $\mathcal{V}'$. Then for any $W \in \st(\mathcal{V}, \mathcal{F})$, each element of $W_\mathcal{U}$ is contained in an element of the uniformly bounded cover
$$
\underbrace{\st(\st( \cdots \st(}_{n-1 \text{ times}} \mathcal{U}, \mathcal{U}), \mathcal{U}) \cdots, \mathcal{U}), \mathcal{U}).
$$
\end{proof}

\begin{Corollary}
Let $(X, \mathcal{X})$ be a large scale space and let $G$ be a group acting on $X$ by coarse equivalences. Then $\asdim\ X \leq \asdim\ X_G$. 
\end{Corollary}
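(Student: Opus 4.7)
The plan is to exploit the coarsely light property of $p_G : X \to X_G$ proved in Proposition \ref{XGlight} via a Hurewicz-type component-splitting argument. Assume $n = \asdim X_G < \infty$ (otherwise there is nothing to prove), and let $\mathcal{W}$ be an arbitrary uniformly bounded cover of $X$. Since every uniformly bounded family in $X$ is uniformly bounded in $X_G$, $\mathcal{W}$ is in particular uniformly bounded in $X_G$, so I can choose a uniformly bounded cover $\mathcal{V}$ of $X_G$ of multiplicity at most $n+1$ such that $\mathcal{W}$ refines $\mathcal{V}$.

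The issue is that $\mathcal{V}$ need not be uniformly bounded in the original space $X$; elements of $\mathcal{V}$ may be ``spread out'' by the action of $G$. To fix this, I would replace each $V \in \mathcal{V}$ by its $\mathcal{W}$-connected components, forming
\[
\mathcal{W}' \;=\; \bigcup_{V \in \mathcal{V}} V_{\mathcal{W}},
\]
using the notation from the paragraph preceding Proposition \ref{XGlight}. This is precisely the construction to which the coarsely light property of $p_G$ applies: because $\mathcal{V}$ is uniformly bounded in $X_G$ and $\mathcal{W}$ is uniformly bounded in $X$, the family $\mathcal{W}'$ is uniformly bounded in $X$.

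It then remains to verify three routine properties. First, $\mathcal{W}'$ covers $X$, since $\mathcal{V}$ does (every $x \in X$ lies in some $V$, hence in some component $V_\mathcal{W}$-class). Second, each $W \in \mathcal{W}$ is contained in some element of $\mathcal{W}'$: by refinement $W \subseteq V$ for some $V \in \mathcal{V}$, and any two points of $W$ are $\mathcal{W}$-related (take the trivial one-step sequence), so $W$ lies inside a single $\mathcal{W}$-component of $V$. Third, the multiplicity of $\mathcal{W}'$ is bounded by that of $\mathcal{V}$, namely $n+1$: distinct components within the same $V$ are disjoint by definition, so any point in $k$ members of $\mathcal{W}'$ must lie in $k$ distinct members of $\mathcal{V}$.

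There is no real obstacle here beyond correctly unpacking the definition of coarsely light; the whole point of Proposition \ref{XGlight} is to make exactly this component-splitting trick work. Combining the three properties above shows $\asdim X \leq n = \asdim X_G$, completing the proof.
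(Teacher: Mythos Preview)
Your argument is correct and follows the same underlying idea as the paper: both use the coarse lightness of $p_G$ established in Proposition~\ref{XGlight}. The paper simply invokes Proposition~9.1 of \cite{WeighillDydakMonotone} as a black box, whereas you have written out explicitly the Hurewicz-type component-splitting argument that this cited result encodes. Your direct proof has the advantage of being self-contained within the paper, and the verification of the three properties (covering, refinement, multiplicity bound) is accurate; in particular, the multiplicity bound works because the $\mathcal{W}$-components of a fixed $V$ partition $V$, so any point lies in at most one contribution per element of $\mathcal{V}$.
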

\begin{proof}
This follows from Proposition 9.1 in \cite{WeighillDydakMonotone}. 
\end{proof}

In the case when $G$ is finite, we can apply a result of Kasprowski \cite{Kasprowski} to get a better result. Kasprowski proves that for a proper metric space $X$  and a finite group $G$ acting on $X$ by isometries, $X/G$ with the metric 
\begin{equation}\label{minmetric}
d([x], [x']) = \min_{g \in G} d(x, gx')
\end{equation}
has asymptotic dimension equal to that of $X$. We get the following corollary for $X_G$.

\begin{Corollary}[of Theorem 1.1 in \cite{Kasprowski}]
Let $X$ be a proper metric space and let $G$ be a finite group acting on $X$ by coarse equivalences. Then $\asdim X_G = \asdim X$. 
\end{Corollary}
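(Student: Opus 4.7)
The plan is to reduce the statement to Kasprowski's theorem, which requires an isometric action on a proper metric space. I would first replace $d$ by a coarsely equivalent proper metric $d'$ under which $G$ acts by isometries, then use Propositions \ref{finiteG} and \ref{obsfinitemetric} to identify $X_G$ up to coarse equivalence with $X/G$ equipped with Kasprowski's metric from (\ref{minmetric}), and finally apply Kasprowski's theorem together with coarse invariance of $\asdim$.

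For the averaging step, set $d'(x,y) = \max_{g \in G} d(gx, gy)$. Since $G$ is finite and each $g \in G$ is large scale continuous, for every $R > 0$ one can choose $S > 0$ such that $d(x,y) \leq R$ implies $d(gx, gy) \leq S$ for all $g \in G$ simultaneously; combined with the trivial inequality $d(x,y) \leq d'(x,y)$ (taking $g$ to be the identity), this shows that $d$ and $d'$ are coarsely equivalent. They therefore induce the same large scale structure, so $X_G$ is unaffected by the change of metric. The inequality $d \leq d'$ further implies that closed $d'$-balls are contained in closed $d$-balls, so properness is preserved. By construction, $G$ acts by isometries on $(X, d')$.

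Next, Proposition \ref{finiteG} supplies a coarse equivalence $p \colon X_G \to X/G$ when $X/G$ is endowed with the pushforward large scale structure $q(\mathcal{X})$. Because $G$ acts by isometries on $(X, d')$, Proposition \ref{obsfinitemetric} identifies $q(\mathcal{X})$ as the large scale structure induced by $d_{\min}([x],[y]) = \min_{g \in G} d'(x, gy)$, which is precisely the metric (\ref{minmetric}) used by Kasprowski. His theorem then yields $\asdim(X/G, d_{\min}) = \asdim(X, d') = \asdim X$, and coarse invariance of $\asdim$ transfers this equality through $p$ to $X_G$, giving $\asdim X_G = \asdim X$.

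The only genuine obstacle is the reduction to an isometric action; the rest is a direct assembly of earlier results in the section with Kasprowski's theorem. The mild subtlety in constructing $d'$ is verifying that properness is preserved under the averaging, and this follows from the one-sided bound $d \leq d'$.
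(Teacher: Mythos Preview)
Your argument is correct and follows essentially the same route as the paper: average the metric so that $G$ acts by isometries, then combine Propositions~\ref{finiteG} and~\ref{obsfinitemetric} with Kasprowski's theorem and coarse invariance of $\asdim$. The only cosmetic difference is that the paper averages via the sum $d'(x,x') = \sum_{g \in G} d(gx, gx')$ rather than the maximum, and you add an explicit check that properness survives the averaging (which the paper leaves implicit).
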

\begin{proof}
As was already observed in \cite{DydakVirk}, the metric
$$
d'_X(x, x') = \sum_{g \in G} d(g\cdot x, g \cdot x') 
$$
on $X$ induces the same large scale structure as the original metric on $X$, and the group $G$ acts on $X$ by isometries with respect to the metric $d'$. Thus we may reduce to the case when $G$ acts by isometries (since asymptotic dimension is invariant under coarse equivalence). Let $d$ be the metric  on $X/G$ defined in (\ref{minmetric}), which by Proposition \ref{obsfinitemetric} induces the large scale structure $q(\mathcal{X})$ on $X/G$, where $q: X \rightarrow X/G$ is the quotient map. Applying Kaprowski's result, we have that $\asdim\ X/G = \asdim\ X$, and since asymptotic dimension is invariant under coarse equivalence, we obtain the result by Proposition \ref{finiteG}. 
\end{proof}

When $X$ is a metric space and $G$ is a countable group then the large scale structure on $X_G$ is countably generated, hence metrizable. In general, an explicit construction of a metric inducing the large scale structure on $X_G$ is cumbersome (see Section \ref{secmetrizable}), but in the case when $G$ acts by isometries, a construction is slightly easier to write down. 

\begin{Proposition}
If $X$ is a metric space and $G$ is a finitely generated group which acts on $X$ by isometries, then the large scale structure on $X_G$ is induced by the metric
$$
d(x, x') = \inf_{g \in G} \{ d_X(x, gx') + |g| \}
$$ 
where $|g|$ is the word-length of $g$ and $d_X$ is the metric on $X$. 
\end{Proposition}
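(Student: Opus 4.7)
The plan is to show that $d$ is an $\infty$-metric on $X$ and that its induced large scale structure coincides with $\mathcal{X}_G$; by Theorem \ref{metrizable} and Lemma \ref{XGUF}, it suffices to prove mutual refinement of the generating families.

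First I would verify that $d$ is an $\infty$-metric. Nonnegativity and $d(x,x)=0$ follow by taking $g=e$. Symmetry uses that $G$ acts by isometries and $|g|=|g^{-1}|$: substituting $h=g^{-1}$ gives $d_X(x,gx')=d_X(hx,x')$ and $|g|=|h|$, so the infima agree. For the triangle inequality, given $g,h$ with $d_X(x,gx')+|g|$ and $d_X(x',hx'')+|h|$ near the respective infima, the element $gh$ satisfies $|gh|\leq |g|+|h|$ and, by the isometry property, $d_X(x,ghx'')\leq d_X(x,gx')+d_X(gx',ghx'')=d_X(x,gx')+d_X(x',hx'')$; taking infima gives $d(x,x'')\leq d(x,x')+d(x',x'')$.

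Next I would show that every uniformly bounded family in $X_G$ has finite $d$-mesh. By Lemma \ref{XGUF}, it suffices to consider a family of the form $\st(\mathcal{U},\mathcal{F})$ with $\mesh_{d_X}(\mathcal{U})<\infty$ and $\mathcal{F}=\{F\cdot x\mid x\in X\}$ for some finite $F\subseteq G$ (which we may assume contains $e$). Suppose $x,x'\in \st(U,\mathcal{F})$ with $U\in\mathcal{U}$; then $x=f_1 y$, $x'=f_2 y'$ and there exist $f_1',f_2'\in F$ with $f_1'y,\ f_2'y'\in U$. Setting $g:=f_1 f_1'^{-1} f_2' f_2^{-1}$, the isometry property yields
$$
d_X(x,gx')=d_X(f_1 y, f_1 f_1'^{-1} f_2' y')=d_X(f_1'y,f_2'y')\leq \mesh_{d_X}(\mathcal{U}),
$$
while $|g|$ is bounded by the finite constant $C_F=\max\{|g'|:g'\in FF^{-1}FF^{-1}\}$. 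Hence $d(x,x')\leq \mesh_{d_X}(\mathcal{U})+C_F$.

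Finally I would show the converse: every $d$-uniformly bounded family refines a uniformly bounded family in $X_G$. It suffices to show that the cover $\{B_d(x,R)\mid x\in X\}$ is uniformly bounded in $X_G$ for each $R>0$. If $y\in B_d(x,R)$, pick $g_y\in G$ with $d_X(x,g_y y)+|g_y|<R+1$. Then $g_y$ lies in the finite set $F=\{g\in G:|g|<R+1\}$ (using that $G$ is finitely generated so balls in the word metric are finite), and $g_y y\in B_{d_X}(x,R+1)$. Since $e\in F$, both $y$ and $g_y y$ belong to $F\cdot y\in\mathcal{F}$, and $F\cdot y$ meets $U:=B_{d_X}(x,R+1)$, so $y\in \st(U,\mathcal{F})$. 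Therefore $B_d(x,R)\subseteq \st(U,\mathcal{F})$, which is an element of a uniformly bounded family in $X_G$ by Lemma \ref{XGUF}. The main obstacle is the computation in the middle step, where one must carefully exploit the isometric action to absorb the group juggling into a single element $g$ of bounded word length, but once this is set up the argument is essentially book-keeping.
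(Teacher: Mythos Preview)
Your proof is correct and follows essentially the same strategy as the paper: verify that $d$ is a metric using the isometric action, then check the two inclusions between $d$-bounded families and families of the form $\st(\mathcal{U},\mathcal{F})$ via Lemma~\ref{XGUF}. Your argument is simply more explicit; in particular, you spell out the triangle inequality and the star-unpacking in the middle step, and your constant $C_F=\max\{|g'|:g'\in FF^{-1}FF^{-1}\}$ is slightly larger than the paper's $\max\{|g|:g\in FF^{-1}\}$, but this is harmless.
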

\begin{proof}
Because $G$ acts by isometries on $X$, $d$ is indeed a metric. If $\mathcal{U}$ is a uniformly bounded family with respect to $d$, with $\mathsf{mesh}(\mathcal{U}) < R$, then let $F$ be the finite set $\{ g \in G \mid |g| \leq R\}$ and let $\mathcal{B}$ be the cover of $X$ by $R$-balls in the original metric $d_X$. Then since $d(x, x') \leq R \implies \exists_{g\in F} d_X(x, gx') \leq R$,  it is easy to check that $\mathcal{U}$ refines the cover $\st(\mathcal{B}, \mathcal{F})$, where $\mathcal{F} = \{F \cdot x \mid x \in X\}$. On the other hand, any cover of the form  $\st(\mathcal{U}, \mathcal{F})$ where $\mathcal{F} = \{F \cdot x \mid x \in X\}$ for a finite subset $F \subseteq G$, refines the set of $R$-balls with respect to $d$, where
$$
R = \max \{|g| \mid g\in F \cdot F^{-1} \} + \mathsf{mesh}(\mathcal{U}).
$$
\end{proof}

\section{Coarsely discontinuous actions}
We now restrict our attention to the analogue of properly discontinuous actions on topological spaces, which we call coarsely discontinuous actions.

\begin{Definition}
Let $X$ be a large scale space and let $G$ be a group acting on $X$ by coarse equivalences. We say that the action of $G$ is \textbf{coarsely discontinuous} if for every uniformly bounded family $\mathcal{U}$ and every element $g \in G \setminus \{e\}$, there is a bounded set $K$ such that for every $U \in \mathcal{U}$ with $U \cap K = \varnothing$, we have $
U \cap g \cdot U = \varnothing.$
\end{Definition}

If $X$ is a metric space, then to say that an action of $G$ on $X$ is coarsely discontinuous is clearly the same as to say that for each $g \in G \setminus \{e\}$ and each $R>0$ there is a bounded set $K$ such that $d(x, g\cdot x) \geq R$ for all $x \notin K$. Or, more succintly, $d(x, g\cdot x) \to \infty$ as $x \to \infty$
for every $g \neq e$.

\begin{Example}
Let $X$ be a compact metric space. By the \textbf{cone} $CX$ on $X$ we mean the quotient $X \times [0, \infty) / \sim$, where $\sim$ is the equivalence relation generated by the pairs $\{ ((x, 0), (x',0)) \mid  x,x' \in X\}$. We can turn $CX$ into a metric space by choosing a continuous weight function $\Phi: [0, \infty) \to [0, \infty)$ with $\Phi(t) = 0 \Leftrightarrow t = 0$ and defining 
$$
d([(x,t)], [(x',t')]) = \inf \left\lbrace \sum\limits_{j=0}^{n-1} |t_j - t_{j+1}| + \max \{\Phi(t_j), \Phi(t_{j+1}) \} d_X(x_j, x_{j+1})  \right\rbrace
$$
where the infimum is taken over all finite sequences $(x_j, t_j)_{0 \leq j \leq n}$ of points in $CX$ with $(x_0, t_0) \sim (x,t)$ and $(x_n, t_n) \sim (x', t')$. One checks that this is a metric. If $X$ is a path metric space, then this is the same metric defined in (3.46) of \cite{RoeComplete}.

If $G$ is a group which acts on $X$ properly discontinuously by isometries, then there is a natural action of $G$ on $CX$ given by $g \cdot (x,t) = (g \cdot x, t)$. It is easy to check this action is by isometries. If $\Phi(t) \to \infty$ as $t \to \infty$, then the action is also coarsely discontinuous. Indeed, let $R > 0$ and $g \in G \setminus \{e\}$. Define
$$
k_g = \min \{ d_X(x, g\cdot x) \mid  x \in X\} > 0.
$$
Pick $t_0 > R$ such that $\Phi(t) > R/k_g$ for all $t > t_0 - R$. If $(x,t) \in CX$ with $t > t_0$, then for any sequence $(x_j, t_j)_{0 \leq j \leq n}$ with $(x,t) = (x_0, t_0)$ and $(g \cdot x, t) = (x_n, t_n)$, if $t_j \geq t_0$ for all $j$ then
$$
\sum\limits_{j=0}^{n-1} |t_j - t_{j+1}| + \max \{\Phi(t_j), \Phi(t_{j+1}) \} d_X(x_j, x_{j+1})  \geq \sum\limits_{j=0}^{n-1} R/k_g \cdot  d_X(x_j, x_{j+1}) \geq R
$$
otherwise $t_m < t_0$ for some $m$ and thus
$$
\sum\limits_{j=0}^{n-1} |t_j - t_{j+1}| + \max \{\Phi(t_j), \Phi(t_{j+1}) \} d_X(x_j, x_{j+1}) \geq \sum\limits_{j=0}^{m-1} |t_j - t_{j+1}| \geq R
$$
by the triangle inequality. Thus outside of the bounded set $X \times [0, t_0]/\sim$, $d((x,t), g\cdot (x,t)) \geq R$ as required.
\end{Example}

\begin{Example}\label{Gleft}
Let $G$ be a group equipped with a discrete proper left-invariant metric (such as a word-length metric). Denote $d(e,g)$ by $|g|$ so that $d(g,h) = |g^{-1}h|$. Consider the action of $G$ on itself as a metric space by left translation: $g\cdot x = gx$. This action is an action by isometries, hence in particular by coarse equivalences. Since $d(x, gx) = |x^{-1}gx|$, this action will be coarsely discontinuous if for every $R > 0$ and $g \neq e$,  $\{ x \mid |x^{-1}gx| \leq R \}$ is finite. The ball of radius $R$ around the identity element is itself finite, so this is clearly equivalent to requiring that for each $x \in G$, $\{ y \in G \mid y^{-1}gy = x^{-1}gx \}$ is finite. Since $x^{-1}gx = y^{-1}gy$ if and only if $xy^{-1}$ is in the centralizer of $g$, we have that the action of $G$ on itself by left translation is coarsely  discontinuous if the centralizer of every $g \in G \setminus \{ e\}$ is finite. Conversly, if the centralizer of some $g \in G \setminus \{ e \}$ is infinite, then $d(x, gx) = |g|$ for infinitely many $x \in G$, so the action can't be coarsely discontinuous. Thus the action of $G$ on itself by left translation is coarsely discontinuous if and only if the centralizer of every non-identity element of $G$ is finite.
\end{Example}

\begin{Nonexample}\label{Gright}
Again consider a finitely generated group equipped with discrete proper left-invariant metric, but this time consider the action by right translation: $g\cdot x = xg^{-1}$. The action of each $g \in G$ by right translation is close to the identity since $d(x, x g^{-1}) \leq |g^{-1}|$, so it follows that this action is by coarse equivalences, but is never coarsely discontinuous if $G$ is infinite.  
\end{Nonexample}

If $X$ is a large scale space and $G$ is a group acting on $X$ by coarse equivalences, then we can consider the set of all coarse equivalences $X \rightarrow X$ which are close to the identity when considered as maps on $X_G$. If we identify in this set those maps which are close as maps on $X$, then we obtain a group under composition which we denote by $\mathsf{Aut}(X/ X_G)$. In the language of category theory, this is the automorphism group of $p_G: X \rightarrow X_G$ in the slice category $(\mathbf{Coarse}/\sim)/X_G$. If $X$ satisfies a connectedness condition and $G$ acts coarsely discontinuously, then we will see that $\mathsf{Aut}(X/ X_G)$ is naturally isomorphic to $G$. 

If $\mathcal{U}$ is a cover of a set $X$, then the \textbf{$\mathcal{U}$-component of $x \in X$} is the set of all $x' \in X$ for which there is a finite sequence $(U_i)_{0\leq i \leq n}$ of elements of $\mathcal{U}$ with $x \in U_0$, $x' \in U_n$ and $U_i \cap U_{i+1} \neq \varnothing$ for $0 \leq i < n$. A set is called \textbf{$\mathcal{U}$-connected} if it contains at most one $\mathcal{U}$-component.

\begin{Definition}\label{oneended}
Let $X$ be a large scale space and let $\mathcal{U}$ be a uniformly bounded cover of $X$. We say that $X$ is \textbf{coarsely one-ended at scale $\mathcal{U}$} if for every bounded set $K$ in $X$ there is a bounded set $K'$ containing $K$ such that $X \setminus K'$ is $\mathcal{U}$-connected. We say that $X$ is coarsely one-ended if it is coarsely one ended at some scale.
\end{Definition}

\begin{Proposition}
	Let $X$ be a proper geodesic metric space. Then $X$ is coarsely one-ended if and only if it is topologically one-ended, that is, for every bounded set $K\subseteq X$ there is a bounded set $K'\subseteq X$ so that $K\subseteq K'$ and $X\setminus K'$ is topologically connected. 
\end{Proposition}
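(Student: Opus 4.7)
My plan is to prove the two directions separately, with the backward direction handled by contrapositive.

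For the forward direction, take $\mathcal{U}$ to be the cover of $X$ by open balls of radius $1$, which is uniformly bounded. Given a bounded set $K \subseteq X$, topological one-endedness provides a bounded closed $K' \supseteq K$ such that $X \setminus K'$ is topologically connected. Since a geodesic metric space is locally path-connected, the open connected set $X \setminus K'$ is path-connected. For any $x, x' \in X \setminus K'$, join them by a continuous path in $X \setminus K'$ and sample it at points $x = x_0, x_1, \ldots, x_n = x'$ with consecutive distances less than $1$ (possible by uniform continuity of the path); this gives a chain in $X \setminus K'$ with consecutive points $\mathcal{U}$-adjacent, witnessing $\mathcal{U}$-connectedness of $X \setminus K'$.

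For the backward direction I would argue the contrapositive. Assume $X$ is not topologically one-ended, so there is a bounded closed $K_0$ such that $X \setminus K'$ is disconnected for every bounded $K' \supseteq K_0$. Let $\mathcal{U}$ be an arbitrary uniformly bounded cover with $\mesh(\mathcal{U}) \leq R$ and set $K = \overline{B(K_0, R)}$, which is bounded by properness. The goal is to show $X \setminus K''$ is not $\mathcal{U}$-connected for every bounded $K'' \supseteq K$. Since $X$ is path-connected and unbounded, $X \setminus K_0$ has at least one unbounded component. If $X \setminus K_0$ has at least two unbounded components $A_1, A_2$, take them; otherwise $X \setminus K_0$ has a unique unbounded component $U$ together with bounded components $\{B_i\}_i$, and in this case $\bigcup_i B_i$ must be unbounded, for otherwise $K_0 \cup \bigcup_i B_i$ would be a bounded set whose complement $U$ is connected, contradicting non-one-endedness.

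Given any bounded $K'' \supseteq K$, pick two distinct components $P, Q$ of $X \setminus K_0$ with $P \cap (X \setminus K'')$ and $Q \cap (X \setminus K'')$ both non-empty: in the first case take $P = A_1, Q = A_2$; in the second take $P = U$ and $Q = B_i$ for some $i$ with $B_i \not\subseteq K''$, which exists since $\bigcup_i B_i$ is unbounded while $K''$ is bounded. Choose $x \in P \cap (X \setminus K'')$ and $x' \in Q \cap (X \setminus K'')$, and suppose for contradiction there is a $\mathcal{U}$-chain $x = x_0, x_1, \ldots, x_n = x'$ in $X \setminus K''$ with $d(x_j, x_{j+1}) \leq R$. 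Since the chain starts in $P$ and ends in $Q \neq P$, some consecutive pair $x_j, x_{j+1}$ lies in distinct components of $X \setminus K_0$, so any geodesic of length $\leq R$ joining them in $X$ must meet $K_0$. Hence $d(x_j, K_0) \leq R$, contradicting $x_j \in X \setminus K'' \subseteq X \setminus \overline{B(K_0, R)}$.

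The main obstacle is this case analysis in the backward direction, i.e.\ ensuring that for every bounded $K'' \supseteq K$ we can still find two distinct components of $X \setminus K_0$ each meeting $X \setminus K''$. Properness of $X$ is essential, since it is what forces $\bigcup_i B_i$ to be unbounded in the single-unbounded-component case; without it, the bounded components of $X \setminus K_0$ could all be absorbed into some $K''$, leaving only one component available and breaking the argument. The geodesic hypothesis is used in both directions: to realise a topological path as a chain of nearby sampled points in the forward direction, and to realise short $\mathcal{U}$-jumps as geodesics that are forced to cross $K_0$ in the backward direction.
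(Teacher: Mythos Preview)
Your direction labels are reversed: what you call the ``forward direction'' (assume topological one-endedness, deduce coarse one-endedness) is the $(\Leftarrow)$ of the biconditional as stated, and your ``backward direction'' is the $(\Rightarrow)$. In the topological $\Rightarrow$ coarse direction you assert without justification that $K'$ may be taken \emph{closed}, which you need so that $X\setminus K'$ is open and hence path-connected. The paper bypasses this entirely with the one-line observation that any topologically connected set is $\mathcal{U}$-connected for an open cover $\mathcal{U}$, since the $\mathcal{U}$-components are relatively open and partition the set; this works for arbitrary bounded $K'$ and makes your path-sampling step unnecessary.

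For the harder direction (coarse $\Rightarrow$ topological) your contrapositive argument is correct and genuinely different from the paper's. The paper argues directly: given $K$ it enlarges to $N=\st(K,\mathcal{U})$, uses coarse one-endedness to obtain $L\supseteq N$ with $X\setminus L$ $\mathcal{U}$-connected, and then \emph{repairs} any topological disconnectedness of $X\setminus L$ by inserting short geodesics between $\mathcal{U}$-close points in distinct components; each such geodesic meets $L$ but misses $K$ (because $N$ was already an $R$-thickening of $K$), and a Zorn's-lemma argument assembles them into a connected complement $X\setminus K'$ with $K\subseteq K'\subseteq L$. You instead fix a bad compact $K_0$, analyse the components of $X\setminus K_0$ (either two unbounded components, or one unbounded component together with an unbounded union of bounded ones), and for every scale $R$ and every bounded $K''\supseteq\overline{B(K_0,R)}$ locate points of $X\setminus K''$ in distinct components of $X\setminus K_0$; a geodesic of length $\le R$ between consecutive chain points is then forced through $K_0$, contradicting $K''\supseteq\overline{B(K_0,R)}$. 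Your route is more elementary in that it avoids the transfinite gluing step the paper only sketches; the paper's route is more constructive. One claim you should still justify is that $X\setminus K_0$ has at least one unbounded component: this genuinely uses properness (e.g.\ if $K_0\subseteq B(p,r)$ then the compact sphere $S(p,r{+}1)$ meets only finitely many of the open components, and the tail of any geodesic from $p$ to a far point lies in $X\setminus K_0$ and hits that sphere, so some component is unbounded).
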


\begin{proof}
	$\left( \Rightarrow \right) :$ Suppose $X$ is coarsely one-ended at scale $\mathcal{U}$. Without loss of generality, let $\mathcal{U}$ be the cover by $R$-balls, $R > 0$. Let $K$ be a bounded set and consider $N=\st(K,\mathcal{U})$. Then there is a $L$ containing $N$ so that $N\subseteq L$ and $X\setminus L$ is $\mathcal{U}$-connected. If $X\setminus L$ is topologically connected, then we're done. Otherwise, let $\{C_i\}_{i\in I}$ be the connected components of $X\setminus L$. Since $X\setminus L$ has one $\mathcal{U}$-component, for each connected component $C_i$ there is a distinct connected component $C_j$ and points $x_i\in C_i$ and $x_j\in C_j$ so that $x_i \mathcal{U} x_j$. In particular, $d(x_i,x_j)<2R$. Let $\gamma_{i,j}$ be a geodesic from $x_i$ to $x_j$. Then $\gamma_{i,j}$ must intersect $L$ (lest $C_i$ and $C_j$ not be distinct connected components) but $\gamma_{i,j}$ does not intersect $K$ since $N$ is an $2R$-ball about $K$ and the length of $\gamma_{i,j}$ is at most $2R$. In the new set $\gamma_{i,j} \cup X\setminus L$, $x_i$ and $x_j$ are in the same connected component. It follows from a Zorn's lemma argument that we can add a union of geodesics, none of which intersect $K$, to $X \setminus L$ to obtain a connected subspace. 
	
	$\left( \Leftarrow \right) :$ This follows from the observation that if a space is topologically connected, then it is $\mathcal{U}$-connected for any open cover $\mathcal{U}$.
\end{proof}

\begin{Lemma}\label{useful}
Let $(X,\mathcal{X})$ be an unbounded large scale space, and let $G$ be a group that acts on $X$ by coarse equivalences. Then the action of $G$ is coarsely discontinuous if and only if for every finite subset $F \subseteq G$ and every pair of uniformly bounded familes $\mathcal{U}$ and $\mathcal{V}$ in $X$, there is a bounded subset $K$ such that for any $x,y \in X \setminus K$ with $x \mathcal{U} y$, and any $g_1 \neq g_2$ in $F$, $\{ \{ g_1 \cdot x, g_2 \cdot y\} \}$ does not refine $\mathcal{V}$.
\end{Lemma}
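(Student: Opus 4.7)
The plan is to prove the two implications directly, using the hypothesis only in the forward direction.

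The easier direction is the reverse one. Assume the combinatorial condition and fix $g \in G\setminus\{e\}$ together with a uniformly bounded family $\mathcal{U}$ (which we may assume is a cover). Instantiate the hypothesis with $F = \{e, g\}$ and $\mathcal{V} = \mathcal{U}$ to obtain a bounded set $K$. If some $U \in \mathcal{U}$ with $U \cap K = \varnothing$ satisfied $U \cap g\cdot U \neq \varnothing$, we could pick $b \in U$ with $a := g\cdot b \in U$ and apply the condition to $x = a$, $y = b$, $g_1 = e$, $g_2 = g$: the one-element family $\{\{a, g\cdot b\}\} = \{\{a\}\}$ then refines $\mathcal{V}$, because $\{a\} \subseteq U \in \mathcal{V}$, contradicting the hypothesis. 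Hence $U \cap g\cdot U = \varnothing$, proving that the action is coarsely discontinuous.

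For the forward direction, assume coarse discontinuity and fix $F$, $\mathcal{U}$, and $\mathcal{V}$. The strategy is to reduce the two-point condition $\{g_1\cdot x, g_2\cdot y\} \subseteq V$ to a one-point statement amenable to coarse discontinuity by translating by $g_1^{-1}$. Set
$$
\mathcal{V}' = \bigcup_{g \in F} g^{-1}\cdot \mathcal{V},
$$
which is uniformly bounded because $F$ is finite and each $g^{-1}$ is large scale continuous, let $\mathcal{W} = \st(\mathcal{U}, \mathcal{V}')$ (uniformly bounded by the star axiom), and set $H = \{g_1^{-1} g_2 : g_1 \neq g_2 \in F\} \subseteq G\setminus\{e\}$. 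Coarse discontinuity applied to each $h \in H$ and the family $\mathcal{W}$ produces a bounded set $K_h$, and I would take $K = \st(\bigcup_{h \in H} K_h, \mathcal{W})$.

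The main computation is now routine. Suppose for contradiction that $x, y \in X \setminus K$ satisfy $x\mathcal{U} y$ together with $\{g_1\cdot x, g_2\cdot y\} \subseteq V$ for some $V \in \mathcal{V}$ and distinct $g_1, g_2 \in F$, and put $h = g_1^{-1}g_2 \in H$. Applying $g_1^{-1}$ to $V$ gives $\{x, h\cdot y\} \subseteq g_1^{-1}\cdot V \in \mathcal{V}'$, and combining this with $\{x, y\} \subseteq U \in \mathcal{U}$ yields $\{y, h\cdot y\} \subseteq \st(U, \mathcal{V}') =: W \in \mathcal{W}$. Since $x \in W$ and $x \notin K$, the element $W$ cannot meet $K_h$, so coarse discontinuity forces $W \cap h\cdot W = \varnothing$; but $h\cdot y \in W \cap h\cdot W$, which is the desired contradiction. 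The main technical obstacle is the bookkeeping step of combining the bounded sets $K_h$ for $h \in H$ into a single bounded set $K$: this uses that finite unions of bounded sets are bounded, which holds in any coarsely connected large scale space (in particular in every metric space), the implicit setting of the lemma and the surrounding results.
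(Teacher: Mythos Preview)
Your proof is correct and follows essentially the same strategy as the paper's. Both arguments reduce the two-element condition $\{g_1\cdot x, g_2\cdot y\}\subseteq V$ to a one-element situation where coarse discontinuity applies, then take a finite union (or star) of the resulting bounded sets $K_h$.

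The only real difference is bookkeeping. The paper works at the translated points: it uses $\mathcal{U}' = \bigcup_{g\in F} g\cdot\mathcal{U}$, observes $(g_1\cdot x)\,\st(\mathcal{V},\mathcal{U}')\,(g_2\cdot x)$, and derives the contradiction at $g_1\cdot x$ via $g_2\cdot x = (g_2 g_1^{-1})(g_1\cdot x)$; this forces the bounded set $K$ to contain the translates $h\cdot K_g$ for $h\in F^{-1}$. You instead apply $g_1^{-1}$ first, work with $\mathcal{V}' = \bigcup_{g\in F} g^{-1}\cdot\mathcal{V}$ and $h = g_1^{-1}g_2$, and derive the contradiction at $y$; this lets you avoid translating the $K_h$'s at the cost of enlarging $K$ by a star. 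The two are mirror images of the same computation. Your remark about needing finite unions of bounded sets to be bounded (i.e.\ coarse connectedness) is well taken and applies equally to the paper's proof, which also takes such a union without comment.
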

\begin{proof}
Suppose the action is coarsely discontinuous, and let $F \subseteq G$ be a finite subset and $\mathcal{U}$ be a uniformly bounded family. For each $g \in F \cdot F^{-1} \setminus \{e\}$ choose a bounded subset $K_g$ such that if $x \notin K_g$ then  $\{ \{x, g\cdot x\} \}$ does not refine $\st(\mathcal{V}, \mathcal{U}')$, where
$$
\mathcal{U}' = \bigcup_{g \in F} g \cdot \mathcal{U}.
$$
Define the bounded set  
$$
K = \bigcup_{h \in F^{-1}} \bigcup_{g \in F \cdot F^{-1}} h\cdot K_g.
$$
If $x \mathcal{U} y$, $x, y \notin K$ and $\{ g_1 \cdot x, g_2 \cdot y \} \subseteq V \in \mathcal{V}$ with $g_1 \neq g_2$, then $(g_2 \cdot y) \mathcal{U}' (g_2 \cdot x)$ so $(g_1 \cdot x) \st(\mathcal{V}, \mathcal{U}') (g_2 \cdot x)$. But $g_2 x = g_2 g_1^{-1} g_1 \cdot x$ and $g_1\cdot  x \notin K_{g_2 g_1^{-1}}$, a contradiction. The converse is easy to check.
\end{proof}

\begin{Theorem}
	Let $(X,\mathcal{X})$ be an unbounded large scale space, and let G be a group that acts on $X$ coarsely discontinuously by coarse equivalences. If $(X,\mathcal{X})$ is coarsely one-ended, then there is a canonical group isomorphism $G\cong \mathsf{Aut}(X/X_G)$.
\end{Theorem}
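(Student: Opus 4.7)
The plan is to exhibit the isomorphism as $\phi: G \to \mathsf{Aut}(X/X_G)$ sending $g$ to the class of the action map $L_g(x) = g \cdot x$. Each $L_g$ is a coarse equivalence by hypothesis, and the family $\{\{L_g(x), x\} \mid x \in X\}$ is uniformly bounded in $X_G$ by the very construction of $\mathcal{X}_G$, so $\phi$ is well-defined. The identity $L_{gh} = L_g \circ L_h$ makes $\phi$ a group homomorphism, and the task reduces to showing it is bijective.

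For injectivity, suppose $L_g \sim \mathrm{id}_X$ as maps $X \to X$, so that $\mathcal{U} := \{\{x, gx\} \mid x \in X\}$ is uniformly bounded in $X$ itself. Apply coarse discontinuity to this $\mathcal{U}$ and $g$ to obtain a bounded set $K$. The set $\st(K, \mathcal{U})$ is an element of the uniformly bounded family $\st(\{K\} \cup \{\text{singletons}\}, \mathcal{U})$, hence bounded; moreover $\st(K, \mathcal{U}) \supseteq K \cup g^{-1}K$. Since $X$ is unbounded, we can pick $x$ outside $\st(K, \mathcal{U})$, so that $x, gx \notin K$. But then $\{x, gx\} \cap g \cdot \{x, gx\}$ contains $gx$, contradicting the conclusion of coarse discontinuity unless $g = e$.

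Surjectivity is the substantive step. Let $f: X \to X$ represent a class in $\mathsf{Aut}(X/X_G)$. By Lemma \ref{XGUF}, $\{\{f(x), x\} \mid x \in X\}$ refines some $\st(\mathcal{U}, \mathcal{F})$ with $\mathcal{F} = \{F \cdot x \mid x \in X\}$ for a finite $F \subseteq G$ (we may assume $e \in F$). Unwinding this produces, for each $x$, a pair $(h(x), z(x)) \in (F \cdot F^{-1}) \times X$ with $x \mathcal{U} z(x)$ and $f(x) \mathcal{U} h(x) \cdot z(x)$; fix such a choice. Now pick a uniformly bounded cover $\mathcal{U}_0$ witnessing coarse one-endedness of $X$, and suppose $x \mathcal{U}_0 x'$ with $h(x) \neq h(x')$. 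Large-scale continuity of $f$, applied through the chain $h(x) \cdot z(x) \,\mathcal{U}\, f(x),\ \{f(x), f(x')\} \subseteq f(\mathcal{U}_0),\ f(x') \,\mathcal{U}\, h(x') \cdot z(x')$, yields a uniformly bounded family $\mathcal{W}$ (depending only on $\mathcal{U}, \mathcal{U}_0, f$) with $h(x) \cdot z(x) \,\mathcal{W}\, h(x') \cdot z(x')$, and similarly a uniformly bounded $\mathcal{V}$ with $z(x) \,\mathcal{V}\, z(x')$. Lemma \ref{useful}, applied to $F \cdot F^{-1}$, $\mathcal{V}$ and $\mathcal{W}$, returns a bounded $K_1$ outside which this configuration is impossible. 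Setting $K_0 = \st(K_1, \mathcal{U})$ (bounded) ensures $x \notin K_0 \Rightarrow z(x) \notin K_1$, so $h$ is constant on every $\mathcal{U}_0$-component of $X \setminus K_0$. Coarse one-endedness then yields a bounded $K' \supseteq K_0$ with $X \setminus K'$ being $\mathcal{U}_0$-connected, whence $h \equiv g$ on $X \setminus K'$ for a single $g \in F \cdot F^{-1} \subseteq G$. For $x \in X \setminus K'$, large-scale continuity of $L_g$ combines with $f(x) \,\mathcal{U}\, g \cdot z(x)$ and $z(x) \,\mathcal{U}\, x$ to place $\{f(x), g \cdot x\}$ in a fixed uniformly bounded family; for $x \in K'$, the pair lies in the bounded set $f(K') \cup g \cdot K'$. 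Hence $f \sim L_g$ in $X$, so $\phi(g) = [f]$.

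The main obstacle is the surjectivity argument, and specifically the step promoting the pointwise choice $h: X \to F \cdot F^{-1}$ to a single group element. This is precisely where both hypotheses are used: coarse discontinuity (through Lemma \ref{useful}) gives local constancy of $h$ at scale $\mathcal{U}_0$ outside a bounded set, and coarse one-endedness then upgrades this to global constancy outside a bounded set, allowing the collapse to a single $g \in G$.
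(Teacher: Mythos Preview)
Your proof is correct and follows essentially the same route as the paper's: define the homomorphism $g \mapsto [L_g]$, use coarse discontinuity for injectivity, and for surjectivity extract a finite-range ``choice function'' $h: X \to F\cdot F^{-1}$ from Lemma~\ref{XGUF}, then combine Lemma~\ref{useful} with coarse one-endedness to force $h$ constant outside a bounded set. Your injectivity argument is in fact more explicit than the paper's one-line appeal to coarse discontinuity.

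One small imprecision worth noting: the ``unwinding'' step as you state it does not quite follow. From $\{f(x),x\} \subseteq \st(U,\mathcal{F}) = F\cdot F^{-1}\cdot U$ you get $x = g_1 u_1$ and $f(x) = g_2 u_2$ with $g_1,g_2 \in F\cdot F^{-1}$ and $u_1,u_2 \in U$, which does not directly give $x \,\mathcal{U}\, z(x)$ for any $z(x)$. What does follow (setting $z(x) = g_1 u_2$) is $x \,\mathcal{U}'\, z(x)$ and $f(x) = h(x)\cdot z(x)$ with $h(x) = g_2 g_1^{-1} \in (F\cdot F^{-1})^2$ and $\mathcal{U}' = \bigcup_{g \in F\cdot F^{-1}} g\cdot \mathcal{U}$. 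This is a harmless enlargement of the finite set and the scale, and the rest of your argument goes through unchanged. (The paper's own unwinding has the same slip, and corrects it in the next sentence by passing to the family $\mathcal{V}'$.)
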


\begin{proof}
Suppose $(X, \mathcal{X})$ is coarsely one-ended at scale $\mathcal{U}$. Define a map $\Phi$ from $G$ to $\mathsf{Aut}(X/X_G)$ by sending $g$ to its action on $X$. This is clearly a group homomorphism. We claim it is surjective. Let $f\in \mathsf{Aut}(X/X_G)$ and suppose (by use of Lemma \ref{XGUF}) that $f$ is close to the identity on $X_G$ as witnessed by $\st(\mathcal{V},\mathcal{F})$ with $\mathcal{V}$ a cover in $\mathcal{X}$ (which we may assume coarsens $\mathcal{U}$) and $\mathcal{F}=\{\{F\cdot x\}\}_{x\in X}$ for $F\subseteq G$ finite. Then for every $x \in X$ there is a $f_1 f_2^{-1} \in F \cdot F^{-1}$ and an $x'$ such that $f(x) = f_1 f_2^{-1} \cdot x'$ and $x \mathcal{V} x'$. Hence for every $x$ there is a $f_1 f_2^{-1} \in F \cdot F^{-1}$ such that $(f_1 f_2^{-1} \cdot x)\mathcal{V}' f(x)$, where $\mathcal{V}'$ is the bounded family
$$
\mathcal{V}' = \bigcup_{h \in F \cdot F^{-1}} h\cdot \mathcal{V}.
$$
For each $x$, pick such an $f_1 f_2^{-1} \in F \cdot F^{-1}$ and call it $\alpha(x)$. We claim that outside of a bounded set, $\alpha(x)$ is uniquely defined. Using Lemma \ref{useful}, choose a bounded set $K$ such that for any $x,y\notin K$ with $x \mathcal{V} y$ and any $g_1 \neq g_2$ in $F \cdot F^{-1}$,  $\{g_1 \cdot x, g_2 \cdot y\}$ does not refine $\st(f(\mathcal{U}), \mathcal{V}')$. If $x \mathcal{U} y$ with $x,y \notin K$, then $(\alpha(x)\cdot x)\mathcal{V}'f(x) f(\mathcal{U}) f(y) \mathcal{V}' (\alpha(y)\cdot y)$, so we must have $\alpha(x) = \alpha(y)$. Choose a bounded set $K'$ containing $K$ such that $X \setminus K'$ is $\mathcal{U}$-connected. Then for any two points $x,y \notin K'$, $x$ and $y$ are connected by a chain of elements of $\mathcal{U}$ and so $\alpha(x) = \alpha(y)$ by the above. Thus, $f$ is close to the action of some $h \in F \cdot F^{-1}$ outside of a bounded set, as witnessed by $\mathcal{V}'$. Since $f$ is large scale continuous, it must be close to the action of $h$ on all of $X$, which shows that $\Phi$ is surjective. By coarse discontinuity, $\Phi$ is also injective, since no action of $g$ is close to the identity on $(X, \mathcal{X})$ so long as $X$ is unbounded.
\end{proof}

Both coarse one-endedness and coarse discontinuity are necessary for the above theorem to hold true. To see the former, take $X$ to be the subspace of $\mathbb{R}^3$ consisting of the positive $x$, $y$ and $z$-axes. Let $G = \mathbb{Z}/3\mathbb{Z}$ act on this space via $1\cdot (x,y,z) = (z,x,y)$. The space $X_G$ is then coarsely equivalent to the positive real axis. But any permutation of $x$,$y$ and $z$ gives rise to an element of $\mathsf{Aut}(X/X_G)$. For coarse discontinuity, take for example the action of a finitely generated group $G$ on itself by right translation (see Example \ref{exgroup}). If we denote the underlying large scale space of $G$ by $|G|$ then $|G|_G$, with respect to this action, is just $|G|$ (see Nonexample \ref{Gright}), and so $\mathsf{Aut}(|G|/ |G|_G)$ is trivial regardless of what $G$ is.

\section{The maximal Roe algebra}
We recall the definition of the maximal Roe algebra from \cite{GongMaximal} (see also \cite{OyonoOyonoYu}). For the remainder of this section, $X$ denotes a discrete bounded geometry metric space (for example, a finitely generated group with a word metric). Recall that a metric space has \textbf{bounded geometry} if for every $R > 0$ there is an integer $N$ such that every $R$-ball in $X$ has at most $N$ elements. Our goal is to relate the Roe algebra of $X$ with that of $X_G$ for a coarsely discontinuous action of $G$, where $G$ is a countable group. 

Fix a separable infinite-dimensional Hilbert space $H$ and consider the algebra of bounded operators on $\ell^{2}(X) \otimes H$. We can view an operator $T$ on $\ell^{2}(X) \otimes H$ as a matrix $(T_{x,y})_{(x,y) \in X \times X}$ of operators on $H$. We say that $T$ has \textbf{propagation less than $R$} if $T_{x,y} = 0$ for $d(x,y) \geq R$. The \textbf{support} of $T$ is the subset of $X \times X$ for which $T_{x,y} \neq 0$. Denote by $C[X]$ the algebra of all bounded operators $T$ on $\ell^2(X) \otimes H$ such that, when $T$ is written as a matrix $(T_{x,y})_{x,y \in X}$ of operators,
\begin{itemize}
\item $T_{x,y}$ is compact for all $x$ and $y$ (that is, $T$ is \textbf{locally compact}), and
\item there exists an $R> 0$ such that $T$ has propagation less than $R$ (that is, $T$ has \textbf{finite propagation}). 
\end{itemize}
The (usual) \textbf{Roe algebra} $C^\ast(X)$ of $X$ is the operator norm closure of $C[X]$ in $B(\ell^2(X) \otimes H)$. However, for our purposes we will need a different $C^\ast$-algebra: the maximal Roe algebra. 

\begin{Definition}[\cite{GongMaximal}]
The \textbf{maximal Roe algebra} $C^\ast_{\mathsf{max}}(X)$ of $C[X]$ is the completion of $X$ with respect to the the $\ast$-norm 
$$
||T|| = \mathsf{sup}_{(\phi, H_\phi)} ||\phi(T) ||_{B(H_\phi)}
$$
where $(\phi, H_\phi)$ runs through representations $\phi$ of $C[X]$ on a Hilbert space $H_\phi$. 
\end{Definition}

Note that it follows from a ``partial translation decomposition'' argument (see \cite{GongMaximal}) that this norm is well-defined. If $G$ is a countable group and $X$ is a metric space then the large scale structure on $X_G$ is induced by a metric (which we may assume is discrete) since it is countably generated. It is easy to see that $C[X_G]$ (and thus $C^\ast(X_G)$ and $C^\ast_{\max}(X_G)$) is the same for any two metrics inducing the same large scale structure, so from now on we will assume that some metric on $X_G$ has been chosen which induces the large scale structure. It follows from Lemma \ref{XGUF} that if $X$ has bounded geometry, then so does $X_G$. 

Clearly $C[X]$ contains all the rank one operators 
$$
e_{(x,v), (y, w)}: \delta_z \otimes u \quad \mapsto  \quad \langle \delta_y  \otimes w, \delta_z \otimes u \rangle \delta_x \otimes v.
$$
It follows that the maximal Roe algebra contains a closed two-sided ideal canonically isomorphic to $\mathcal{K}$, the compact operators on $\ell^2(X) \otimes H$ (this is because $\mathcal{K}$ is the universal $C^\ast$-algebra generated by a system of matrix units). We will want to work with the quotient $
C^\ast_{\max}(X) / \mathcal{K}.$

Suppose a group $G$ acts on $X$ by coarse equivalences. For an element $g \in G$, let $M_g$ be the operator on $\ell^2(X) \otimes H$ given by 
$$
(M_g)_{x,y} = \begin{cases}
1_H & \text{ if } gy = x \\
0 & \text{ otherwise }
\end{cases}
$$
Note that $M_{g} M_h = M_{gh}$ and $M_g^\ast = M_{g^{-1}}$. Thus $G$ has an induced action on $B(\ell^2(X) \otimes H)$ via
$$
g \cdot T = M_g T M_g^\ast.
$$
This action restricts to an action on $C[X]$ which extends to an action on $C_{\max}^\ast(X)$. This action preserves the ideal $\mathcal{K}$, so we obtain an action on $C^\ast_{\max}(X) / \mathcal{K}$. Note also that $M_g$, while not an element of $C[X_G]$ (it is not locally compact), is a multiplier of $C[X_G]$ because it has finite propagation.

\begin{Theorem}\label{RoeXG}
Let $G$ be a countable group which acts coarsely discontinuously by coarse equivalences on a discrete bounded geometry metric space $X$ . Let $\alpha$ be the action of $G$ on $C^\ast_{\max}(X)/\mathcal{K}$ given by $g\cdot [T] = [M_g T M_g^\ast]$. Then there is a canonical $\ast$-isomorphism
$$
\xymatrix{
\left( C^\ast_{\max}(X) / \mathcal{K} \right) \rtimes_{\alpha} G
\ar[r]^(.55){\cong} & 
C^\ast_{\max}(X_G) / \mathcal{K}
}
$$
where $\rtimes_\alpha$ denotes the full crossed product. 
\end{Theorem}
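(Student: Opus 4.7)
The plan is to construct $\Phi$ via the universal property of the full crossed product and then establish that it is both surjective and injective, hence isometric. For the construction, the inclusion $C[X] \subseteq C[X_G]$ (valid because every uniformly bounded family of $X$ is uniformly bounded in $X_G$) extends by the universal property of the maximal norm to a $\ast$-homomorphism $C^\ast_{\max}(X) \to C^\ast_{\max}(X_G)$ and then, since $\alpha_g$ preserves $\mathcal{K}$, to $\bar\iota : C^\ast_{\max}(X)/\mathcal{K} \to C^\ast_{\max}(X_G)/\mathcal{K}$. Each $M_g$ has support $\{(gy,y) : y \in X\}$, uniformly bounded in $X_G$ by Definition \ref{defXG}, so it is a multiplier of $C[X_G]$ and descends to a unitary multiplier of $C^\ast_{\max}(X_G)/\mathcal{K}$. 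The covariance $M_g T M_g^\ast = \alpha_g(T)$ is built into the definition of $\alpha$, so the universal property produces $\Phi$, which on the dense subalgebra $C_c(G, C[X])$ sends $\sum_g [T_g] \delta_g$ to $[\sum_g T_g M_g]$.

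For surjectivity I would show that every $S \in C[X_G]$ is, modulo $\mathcal{K}$, of the form $\sum_{g \in F'} T_g M_g$ with $T_g \in C[X]$ and $F' \subseteq G$ finite. Given a propagation bound for $S$ in $X_G$, Lemma \ref{XGUF} implies that every $(x,y) \in \mathrm{supp}(S)$ admits a representation $y = g \cdot z$ with $g$ in a fixed finite subset $F'$ of $G$ and $z$ satisfying $d_X(x,z) \leq R'$ for some uniform $R'$. Lemma \ref{useful} (coarse discontinuity) then guarantees that outside a bounded set $K$ this $g$ is uniquely determined by $(x,y)$. Partitioning $\mathrm{supp}(S) \setminus (K \times X \cup X \times K)$ by this unique $g$ yields operators $T_g \in C[X]$ with $T_g M_g$ reproducing the corresponding piece of $S$; the remaining bounded portion is a finite (by bounded geometry) matrix of compact entries and is therefore compact.

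For injectivity I would show that every covariant representation $(\pi,u)$ of $(C^\ast_{\max}(X)/\mathcal{K}, G, \alpha)$ on a Hilbert space $H$ factors through $\Phi$. Write $\tilde\pi = \pi \circ q$ where $q : C^\ast_{\max}(X) \to C^\ast_{\max}(X)/\mathcal{K}$, and attempt to define $\rho : C[X_G] \to B(H)$ by $\rho(S) = \sum_g \tilde\pi(T_g) u(g)$ using the decomposition above. Well-definedness reduces to the uniqueness lemma: if $\sum_{g \in F} T_g M_g \in \mathcal{K}$ with $T_g \in C[X]$, then each $T_g \in \mathcal{K}$ (so that $\tilde\pi(T_g) = 0$). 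Granting this, direct computation using the covariance $u(g) \tilde\pi(T) u(g)^\ast = \tilde\pi(\alpha_g(T))$ shows $\rho$ is a $\ast$-homomorphism vanishing on $\mathcal{K}$; the universal property of the maximal norm then extends it to $\rho : C^\ast_{\max}(X_G)/\mathcal{K} \to B(H)$. Since $\rho \circ \Phi$ agrees with the integrated form $\pi \rtimes u$ on the dense subalgebra $C_c(G, C[X])$, we have $\|\pi \rtimes u(y)\| \leq \|\Phi(y)\|$ for all $y$; taking the supremum over covariant representations gives $\|y\|_{\max} \leq \|\Phi(y)\|$, so $\Phi$ is isometric.

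The main obstacle will be the uniqueness lemma. To attack it, fix $g_0 \in F$ and right-multiply $\sum_g T_g M_g \in \mathcal{K}$ by $M_{g_0}^\ast$ to obtain $T_{g_0} + \sum_{g \neq g_0} T_g M_{g g_0^{-1}} \in \mathcal{K}$. Since coarse discontinuity applied to $g g_0^{-1} \neq e$ shows $d_X(y, g g_0^{-1} y) \to \infty$, the support of $T_g M_{g g_0^{-1}}$, which lies in $\{(x,y) : d_X(x, g g_0^{-1} y) \leq R\}$, becomes disjoint from the $X$-propagation tube $\{(x,y) : d_X(x,y) \leq R\}$ outside a bounded set in the $y$-coordinate. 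Consequently the column vectors $T_{g_0}(\delta_y \otimes v)$ and $\sum_{g \neq g_0} T_g M_{g g_0^{-1}}(\delta_y \otimes v)$ are orthogonal in $\ell^2(X) \otimes H$ for such $y$, giving $\|T_{g_0}(\delta_y \otimes v)\| \leq \|K(\delta_y \otimes v)\|$, which tends to $0$ by compactness of $K$. The analogous argument applied to $S^\ast$ controls rows of $T_{g_0}$; combined with the standard fact that a locally compact finite-$X$-propagation operator on a bounded geometry space is compact precisely when its matrix entries vanish at infinity, this forces $T_{g_0} \in \mathcal{K}$.
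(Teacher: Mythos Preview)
Your proposal is correct and takes essentially the same approach as the paper. Both arguments rest on the same two lemmas: the decomposition of any $S \in C[X_G]$ as a finite sum $\sum_g T_g M_g$ with $T_g \in C[X]$ (via Lemma~\ref{XGUF}), and uniqueness of this decomposition modulo $\mathcal{K}$ (via coarse discontinuity and Lemma~\ref{useful}). The paper uses these to build an explicit two-sided inverse $\Psi''$ sending $[S]$ to $\sum_g [T_g]\delta_g$; you instead show that every covariant representation $(\pi,u)$ factors through $\Phi$ by means of a map $\rho$, but your $\rho$ is exactly the paper's $\Psi''$ post-composed with $\pi \rtimes u$, so the two constructions coincide.

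The one substantive difference is the uniqueness lemma. The paper only needs the case $\sum_g T_g M_g = 0$ and handles it very cleanly: outside a bounded set each matrix entry $(x,y)$ of the sum is contributed to by at most one $g$, so each $T_g$ must have \emph{finite} support, hence lies in $\mathcal{K}$. You prove the stronger statement $\sum_g T_g M_g \in \mathcal{K} \Rightarrow T_g \in \mathcal{K}$ by an orthogonality-of-columns and norm-decay argument; this is correct but more laborious. The stronger version is forced by your packaging (you need $\rho$ to annihilate $\mathcal{K}$ directly), whereas the paper avoids it by checking separately that $\Psi'$ vanishes on rank-one operators, each of which lies in $C[X]$ and hence has an obvious decomposition with $T_e \in \mathcal{K}$.
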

\begin{proof}
Since for any $T \in C[X]$,
$||T M_g ||^2 = ||TM_g (TM_g)^\ast|| = ||TT^\ast|| = ||T||^2$
in $C^\ast_{\max}(X)$, the map $T \mapsto T M_g$ extends to a multiplier of $C^\ast_{\max}(X_G)$. Define a map $\Phi$ from $C_c[G,C^\ast_{\max}(X)]$ to  $C^\ast_{\max}(X_G)$ by $T\delta_g \mapsto  T M_g$. One checks that this actually defines a $\ast$-homomorphism, where the product on $C_c[G, C^\ast_{\max}(X)]$ is the convolution product with respect to the action $\alpha$. This gives rise to a $\ast$-homomorphism
$$
\Phi: C^\ast_{\max}(X) \rtimes_\alpha  G \rightarrow C^\ast_{\max}(X_G)
$$
Since the image of $\mathcal{K} \rtimes_\alpha G$ under this map is contained in $\mathcal{K}(\ell^{2}(X) \otimes H)$, we get a map
$$
\Phi'': C^\ast_{\max}(X)/ \mathcal{K} \rtimes_\alpha  G
 \rightarrow 
 C^\ast_{\max}(X_G)/ \mathcal{K}
$$
Here we are implictly using the fact that the full crossed product functor is exact, so that 
$$
C^\ast_{\max}(X)/ \mathcal{K} \rtimes_\alpha  G \cong 
\left( C^\ast_{\max}(X) \rtimes_\alpha G \right) / \left( \mathcal{K} \rtimes_\alpha  G \right).
$$
We claim that $\Phi''$ is a $\ast$-isomorphism. We will prove this by constructing an inverse $\Psi''$ to it. 

Let $T \in C[X_G]$. We claim that $T$ can be written as
$$
T = \sum_{g \in G} T_g M_g
$$
where each of the $T_g$ are in $C[X]$ and only finitely many of the $T_g$ are non-zero. Moreover, we claim that if $T = \sum_{g \in G} S_g M_g $
is some other decomposition of $T$ with each $S_g \in C[X]$, then $S_g - T_g \in \mathcal{K}(\ell^{2}(X) \otimes H)$ for every $g \in G$. To prove the existence of such a decomposition, note that by Lemma~\ref{XGUF}, there exists an $R>0$ and a finite set $F= \{g_0, \ldots, g_k\}$ of elements of $G$ such that the support of $T$ can be written as a disjoint union $\sqcup_{i=0}^{k} R_{g_i}$
where for any $(x,y) \in R_{g_i}$, there exists an $x'$ such that $g_ix' = y$ and $d_X(x,x') \leq R$ (note that this distance is in $X$, not $X_G$). Write the corresponding decomposition of $T$ (thinking of $T$ as a map from $X\times X$ to $\mathcal{K}(H)$) as
$$
T = \sum_{i=0}^{k} T|_{R_{g_i}}
$$
It follows from the definition of $R_{g_i}$ that $M_{g_i}^\ast T|_{R_{g_i}} $ (and thus also $ M_{g_i} M_{g_i}^\ast (T|_{R_{g_i}}) M_{g_i}^\ast = (T|_{R_{g_i}}) M_{g_i}^\ast$) is an element of $C[X]$. Thus we have our decomposition
$$
T = \sum_{i=0}^k T_i  M_{g_i},
$$
where $T_i = T|_{R_{g_i}} M_{g_i}^\ast$. 

Now suppose there is another such decomposition $T = \sum_{i=0}^l S_i M_{g_i}$ with each $S_i \in C[X]$. Then $\sum_{i=0}^l (S_i-T_i) M_{g_i} = 0$ with $T_i = 0$ for $i > k$ for convenience. Pick an $R' > 0$ such that for every $i$, $S_i - T_i$ has propagation less than $R'$. Using Lemma \ref{useful}, choose a bounded set $K$ such that for every pair $g_i \neq g_j$ in $\{g_0, \ldots ,g_l\}$, $d(g_i\cdot x, g_j \cdot x) > 2R'$. Notice that for any $i$, $0 \leq i \leq l$, $(S_i-T_i) M_{g_i}$ can only have a nonzero $(x,y)$ entry if $d(g_i \cdot x, y) \leq R'$. If $d(g_j \cdot x, y) \leq R'$ for some other $0 \leq j \leq l$, then 
$$
d(g_i \cdot x, g_j \cdot x) \leq 2R'
$$
implies $x \in K$. Thus for $x \notin K$, the $(x,y)$ entry of the sum $\sum_{i=0}^l (S_i-T_i) M_{g_i}$ is contributed to by exactly one $(S_i-T_i) M_{g_i}$. It follows that the support of every $(S_i- T_i)$ is a finite set, and thus each $(S_i - T_i)$ is compact.

We are now ready to define our inverse map. For any $T \in C[X_G]$, decompose $T$ as $T = \sum_{g \in G} T_g M_g$
with each $T_g \in C[X]$
and define 
$$\Psi(T) = \sum [T_g] \delta_g \in C^\ast_{\max}(X)/ \mathcal{K} \rtimes_\alpha  G.$$
Our previous calculations show that this is well-defined. One easily checks that this defines a $\ast$-homomorphism, and so it extends to a map
$$
\Psi': C^\ast_{\max}(X_G) \rightarrow C^\ast_{\max}(X)/ \mathcal{K} \rtimes_\alpha  G
$$
The final step is to note that the image of $\mathcal{K}$ under this map is $0$, so we have an induced map
$$
\Psi'': C^\ast_{\max}(X_G)/ \mathcal{K} \rightarrow
C^\ast_{\max}(X)/ \mathcal{K} \rtimes_\alpha  G
$$
which by construction is a two-sided inverse for $\Phi''$.
\end{proof}

Another way to write the conclusion of Theorem \ref{RoeXG} above is that there is a short exact sequence
\begin{equation} \label{exactXG}
0\to \mathcal{K} \to C^\ast_{\max}(X_G) \to (C^\ast_{\max}(X) / \mathcal{K}) \rtimes_{\alpha} G \to 0.
\end{equation}
The proof of Theorem \ref{RoeXG} was inspired by the proof of Proposition 2.8 in \cite{OyonoOyonoYu}. We now show that we recover this result as a special case. Let $\Gamma$ be a residually finite finitely generated group and let 
$$
\Gamma_0 \supset \Gamma_1 \supset \ldots 
$$
be a sequence of normal finite index subgroups such that $\cap_{i \in \mathbb{N}} \Gamma_i = \{e\}$. Let $d_\Gamma$ be the left-invariant metric associated to some generating set in $\Gamma$ and give $\Gamma/\Gamma_i$ the metric $d_i$ defined by $d_i(a\Gamma_i, b\Gamma_i) = \min \{ d_\Gamma (a\gamma_1,b \gamma_2) \mid \gamma_1, \gamma_2 \in \Gamma_i\}$. We define the \textbf{box space} to be the set $X(\Gamma) = \sqcup_{i \in \mathbb{N}} \Gamma/\Gamma_i$ equipped with a metric $d$ such that 
\begin{itemize}
\item $d$ agrees with the metric $d_i$ defined above on $\Gamma/\Gamma_i$, 
\item $d(\Gamma/\Gamma_i , \Gamma/\Gamma_j) > i + j$ if $i \neq j$, and
\item  the action of $\Gamma$ on $X(\Gamma)$ induced by left translation is an action by isometries.
\end{itemize}
Note that this last point is really a matter of appropriately defining $d(x, y)$ for $x \in \Gamma_i$ and $y \in \Gamma_j$ with $i \neq j$. 

\begin{Corollary}[Proposition 2.8 in \cite{OyonoOyonoYu}]\label{YuCorollary}
In the situation above, there is a short exact sequence
$$
0\to \mathcal{K} \to C^\ast_{\max}(X(\Gamma)) \to A_\Gamma \rtimes_{\alpha} \Gamma \to 0.
$$
where $A_\Gamma = \ell^{\infty}(X(\Gamma), \mathcal{K}(H)) / C_0(X(\Gamma), \mathcal{K}(H)) $ and the action of $\Gamma$ on $A_\Gamma$ is induced by the action of $\Gamma$ on $X(\Gamma)$ given by right translation, that is $g \cdot f(h \Gamma_i) = f(h g \Gamma_i)$.
\end{Corollary}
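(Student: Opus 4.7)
The plan is to derive this corollary as a direct application of Theorem~\ref{RoeXG}. I would take $X$ to be the underlying set of the box space $X(\Gamma)=\bigsqcup_{i\in\mathbb{N}}\Gamma/\Gamma_i$ equipped with the discrete large-scale structure in which the only uniformly bounded families are families of singletons, and let $\Gamma$ act on $X$ by right translation $g\cdot(h\Gamma_i)=hg^{-1}\Gamma_i$, which is well-defined by normality of the $\Gamma_i$. In this discrete setting distinct group elements of $\Gamma$ act as distinct permutations, so $d(g_i\cdot x, g_j\cdot x)=\infty$ whenever $g_i\neq g_j$, which is precisely the consequence of Lemma~\ref{useful} that is invoked in the proof of Theorem~\ref{RoeXG}.

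The first step is to identify $X_\Gamma$ with the box space $X(\Gamma)$ as a coarse space. By Lemma~\ref{XGUF}, uniformly bounded families in $X_\Gamma$ are refinements of $\{F\cdot x : x\in X\}$ for finite $F\subseteq\Gamma$. A short calculation using left-invariance of the word metric $d_\Gamma$ and normality of $\Gamma_i$ shows that the $R$-ball around $h\Gamma_i$ in the box space metric equals $\{hf\Gamma_i : |f|\leq R\}=B_\Gamma(e,R)\cdot(h\Gamma_i)$. Hence the two large-scale structures coincide and $C^\ast_{\max}(X_\Gamma)=C^\ast_{\max}(X(\Gamma))$ with the same ideal of compact operators.

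The second step is to recognize $C^\ast_{\max}(X)/\mathcal{K}\cong A_\Gamma$. Since $X$ is discrete, the operators in $C[X]$ must have propagation zero, so $C[X]=\ell^\infty(X(\Gamma),\mathcal{K}(H))$; this algebra is already complete, and the ideal generated by its diagonal rank-one operators is exactly $C_0(X(\Gamma),\mathcal{K}(H))$. The induced $\Gamma$-action $\alpha_g(T)=M_g T M_g^\ast$ sends diagonal entries $(T_{x,x})$ to $(T_{g^{-1}\cdot x,g^{-1}\cdot x})$, which matches the action $(g\cdot f)(h\Gamma_i)=f(hg\Gamma_i)$ stated in the corollary. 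Putting the two identifications together, Theorem~\ref{RoeXG} yields the $\ast$-isomorphism $C^\ast_{\max}(X(\Gamma))/\mathcal{K}\cong A_\Gamma\rtimes_\alpha\Gamma$, which is equivalent to the desired short exact sequence.

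The main technical obstacle I anticipate is justifying the coarse discontinuity hypothesis in this discrete setting: for $g\neq e$ the fixed-point set in $X$ equals $\bigsqcup_{i:g\in\Gamma_i}\Gamma/\Gamma_i$, which is finite but not always a singleton, so the literal definition of coarse discontinuity may be awkward to apply. The resolution is that the proof of Theorem~\ref{RoeXG} only uses Lemma~\ref{useful} to produce, for each finite subset $F\subseteq\Gamma$ and each $R'>0$, a bounded set $K$ such that $d(g_i\cdot x,g_j\cdot x)>2R'$ for all $x\notin K$ and all $g_i\neq g_j$ in $F$. In our discrete $X$ this condition holds with $K=\varnothing$, so the proof transfers verbatim and delivers the corollary.
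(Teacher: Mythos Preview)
Your overall strategy---realize $X(\Gamma)$ as $X_\Gamma$ for a suitable auxiliary structure on the underlying set and invoke Theorem~\ref{RoeXG}---is exactly the paper's. The trouble lies in your choice of auxiliary structure and in one factual slip about the action.

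You equip $X$ with the truly discrete large-scale structure (only families of singletons are uniformly bounded). Three related problems arise. First, your assertion that $d(g_i\cdot x, g_j\cdot x)=\infty$ whenever $g_i\neq g_j$ is false: if $g_jg_i^{-1}\in\Gamma_k$ then $g_i$ and $g_j$ act identically on all of $\Gamma/\Gamma_k$, so $g_i\cdot x=g_j\cdot x$ there. You later acknowledge these fixed points, but your proposed resolution---that the conclusion of Lemma~\ref{useful} holds with $K=\varnothing$---fails for exactly this reason. Second, in the discrete structure the bounded sets are singletons, so you cannot take $K$ to be the (finite, multi-point) set of bad points and still have it bounded; thus neither coarse discontinuity nor the needed instance of Lemma~\ref{useful} holds for this $X$. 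Third, with the discrete base the structure on $X_\Gamma$ is generated solely by the families $\{F\cdot x\}$, so distinct pieces $\Gamma/\Gamma_i$ and $\Gamma/\Gamma_j$ remain at infinite distance in $X_\Gamma$, whereas in the box-space metric they are finitely far apart. Hence $X_\Gamma$ and $X(\Gamma)$ are \emph{not} the same coarse space, and the claimed equality $C^\ast_{\max}(X_\Gamma)=C^\ast_{\max}(X(\Gamma))$ is unjustified (indeed $C[X_\Gamma]$ misses all cross-piece finite-rank operators).

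All of this is repaired by the paper's choice: take $X'$ to carry the smallest \emph{coarsely connected} large-scale structure, in which a family is uniformly bounded precisely when it contains only finitely many non-singletons. This structure is metrizable since $X(\Gamma)$ is countable; the bounded sets are exactly the finite sets, so coarse discontinuity holds with $K$ equal to the finite fixed-point set of each $g\neq e$; and the extra uniformly bounded families from $X'$ supply precisely the finitely many cross-piece pairs needed to make $X'_\Gamma$ coarsely equivalent to $X(\Gamma)$. With this single adjustment your argument becomes the paper's proof.
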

\begin{proof}
Let $X'$ be the underlying set of $X(\Gamma)$ equipped with the smallest coarsely connected large scale structure, that is, wherein uniformly bounded families are precisely those which contain finitely many non-singleton sets (since $X(\Gamma)$ is countable, this large scale structure is metrizable). Then $\Gamma$ acts on $X'$ by coarse equivalences. Moreover, this action is coarsely discontinuous. Indeed, since $\cap_{i \in \mathbb{N}} \Gamma_i = \{e\}$, every $\gamma \in \Gamma$ only fixes a finite number of points in $X'$. It is easy to check, by similar arguments as in Example \ref{exgroup},  that $X(\Gamma)$ is coarsely equivalent to $X'_\Gamma$ in the sense of Definition \ref{defXG}. Thus we can assume that $X'_\Gamma = X(\Gamma)$ as metric spaces. From Theorem \ref{RoeXG}, we have an exact sequence
$$
0\to \mathcal{K} \to C^\ast_{\max}(X(\Gamma)) \to (C^\ast_{\max}(X') / \mathcal{K}) \rtimes_{\alpha} \Gamma \to 0.
$$
It is enough then to show that $C^\ast_{\max}(X') /\mathcal{K}$ is $\ast$-isomorphic to $A_\Gamma$ in a way that preserves the action of $\Gamma$. There is an obvious $\ast$-homomorphism
$$
\Theta: A_\Gamma \rightarrow C^\ast_{\max}(X') /\mathcal{K}
$$
given by sending an element $f \in \ell^{\infty}(X(\Gamma), \mathcal{K}(H))$ to the (zero-propagation) diagonal matrix with entries $(f(\gamma))_{\gamma \in \Gamma}$. This map is clearly injective since such an $f$ represents a compact operator if and only if it is in  $C_0(X(\Gamma), \mathcal{K}(H))$. It remains to show it is surjective. Let $T \in C[X']$. Then by definition of the large scale structure on $X'$, $T$ can be written as $T' + T''$ where $T'$ has finitely many entries and $T''$ is a diagonal matrix of compact operators. Thus $T''$ is in the image of the map $\Theta$ above. Thus the image of $\Theta$ is dense and so $\Theta$ is surjective.
\end{proof}

\begin{Remark}
In fact, the action of $\Gamma$ on $X(\Gamma)$ in Proposition 2.8 in \cite{OyonoOyonoYu} is implied to be by \emph{left} translation. However, the authors believe that this is an error in \cite{OyonoOyonoYu}. The key observation is that if $d(e,g) \leq R$ for some $g \in \Gamma$ with $d$ being a left-invariant metric, then left translation by $g$ on $\ell^2(\Gamma)$ is not in general a finite propagation operator because $d(a, ga) = |a^{-1}ga|$. On the other hand \emph{right} translation by $g$ \emph{is} an operator of propagation less than $R$ since $d(a, ag) = |a^{-1}ag| \leq R$. A similar argument shows the same fact for the action of $g$ on $X(\Gamma)$. Thus in the proof of Proposition 2.8 in \cite{OyonoOyonoYu}, the operators $L_{\gamma_i \Gamma}$ should really be right translation operators and not left translation operators.
\end{Remark}

This corollary together with Remark 2.12 in \cite{OyonoOyonoYu} also shows  that there is no hope for an exact sequence of the form of (\ref{exactXG}) where the maximal Roe algebra is replaced by the usual Roe algebra and the full crossed product is replaced by the reduced crossed product. We will show, however, that in case $X$ has Yu's Property A (first introduced in \cite{YuEmbed}) and the group $G$ acting on it is amenable, we can make the replacement. 

Many equivalent definitions of Property A exist in the literature. We will use a definition (which is equivalent to Property A for bounded geometry discrete metric spaces) due to Dadarlat-Guentner. For an index set $S$, let $\Delta(S)$ denote the set of formal linear combinations 
$$
\sum_{s \in S} a_s \cdot s 
$$
such that $a_s \in [0,1]$ for each $s$, $a_s = 0$ for all but finitely many $s$, and $\sum a_s = 1$. We will equip $\Delta(S)$ with the $l^1$ metric. The \textbf{star} of a vertex $s \in S$ is the set of all elements of $\Delta(S)$ with $a_s \neq 0$. By a \textbf{partition of unity} on a set $X$, we mean a map $\phi: X \rightarrow \Delta(S)$ for some set $S$. 

\begin{Definition}\cite{DaGu}
 A large scale space $X$ is \textbf{exact} if for each uniformly bounded cover
 $\mathcal{U}$ of $X$ and each $\varepsilon > 0$ there is a partition of unity
 $\phi:X\to \Delta(S)$ such that point-inverses of stars of vertices form a uniformly bounded cover of $X$
 and the mesh of $\phi(\mathcal{U})$ is smaller than $\varepsilon$.
\end{Definition}

Also recall from \cite{DaGu} that a bounded geometry discrete metric space is exact if and only if it has Property A.

\begin{Theorem}\label{XGpropertyA}
Let $G$ be a countable group which acts on a discrete bounded geometry metric space $X$ by coarse equivalences. If $G$ is amenable and $X$ has Property A, then $X_G$ has Property A. 
\end{Theorem}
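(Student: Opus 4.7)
The strategy is to verify the Dadarlat--Guentner characterization of exactness directly for $X_G$, by averaging a Property~A partition of unity for $X$ against a F\o lner function for $G$. Given a uniformly bounded cover of $X_G$ and $\varepsilon>0$, Lemma~\ref{XGUF} reduces us to a cover of the form $\st(\mathcal{V},\mathcal{F})$ with $\mathcal{V}$ uniformly bounded in $X$ and $\mathcal{F}=\{F\cdot x\mid x\in X\}$ for a nonempty finite $F\subseteq G$ containing $e$. Using amenability of $G$ (Reiter's condition applied to the symmetric finite set $FF^{-1}$), select a finitely supported probability measure $\mu$ on $G$, with support $K\ni e$, for which $\sum_{g\in G}|\mu(hg)-\mu(g)|<\varepsilon/3$ for every $h\in FF^{-1}$. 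Since $K$ is finite and each element of $G$ acts as a large scale continuous map, the family $\mathcal{V}^{\ast}=\bigcup_{g\in K}g^{-1}\cdot\mathcal{V}$ is uniformly bounded in $X$. Property~A of $X$ then furnishes a partition of unity $\psi\colon X\to\Delta(T)$ with $\mesh(\psi(\mathcal{V}^{\ast}))<\varepsilon/3$ and with $\{A_{t}\}_{t\in T}$ uniformly bounded in $X$, where $A_{t}=\psi^{-1}(\mathrm{star}(t))$.

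Define $\phi\colon X_{G}\to\Delta(T)$ by
\[
\phi(x)_{t}=\sum_{g\in G}\mu(g)\,\psi(g^{-1}x)_{t},
\]
which is indeed a partition of unity since $\sum_{t}\phi(x)_{t}=\sum_{g}\mu(g)=1$. For each $t\in T$, the point-inverse of the star of $t$ equals $\bigcup_{g\in K}g\cdot A_{t}$, and since $e\in K$ this is contained in $\st(A_{t},\mathcal{F}_{K})$ where $\mathcal{F}_{K}=\{K\cdot x\mid x\in X\}$. By Remark~\ref{XGrem1}, $\mathcal{F}_{K}$ is uniformly bounded in $X_{G}$, so the family of point-inverses of stars refines the uniformly bounded family $\st(\{A_{t}\}_{t\in T},\mathcal{F}_{K})$ in $X_{G}$, which is exactly the required condition.

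For the mesh bound, observe that each element of $\st(V,\mathcal{F})$ has the form $hv$ with $h\in FF^{-1}$ and $v\in V$. Given $x=h_{1}v_{1}$ and $y=h_{2}v_{2}$ in a common $\st(V,\mathcal{F})$, decompose $\|\phi(x)-\phi(y)\|_{1}$ by the triangle inequality through $\phi(v_{1})$ and $\phi(v_{2})$. The middle term is at most $\sum_{g}\mu(g)\|\psi(g^{-1}v_{1})-\psi(g^{-1}v_{2})\|_{1}<\varepsilon/3$, since $g^{-1}v_{1}$ and $g^{-1}v_{2}$ share the element $g^{-1}V\in\mathcal{V}^{\ast}$ for each $g\in K$. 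For each outer term $\|\phi(hv)-\phi(v)\|_{1}$, the substitution $g\mapsto hg$ inside the sum defining $\phi(hv)$ yields $\phi(hv)_{t}-\phi(v)_{t}=\sum_{g}(\mu(hg)-\mu(g))\psi(g^{-1}v)_{t}$; passing the absolute value inside the sum (legal because each $\psi(g^{-1}v)$ is a probability measure on $T$) gives the bound $\sum_{g}|\mu(hg)-\mu(g)|<\varepsilon/3$ by the F\o lner choice of $\mu$. Summing the three contributions yields $\|\phi(x)-\phi(y)\|_{1}<\varepsilon$, completing the proof. The main subtlety is the enlargement of $\mathcal{V}$ to $\mathcal{V}^{\ast}$ before invoking Property~A: the averaging operator, which must simultaneously neutralize the intra-orbit $\mathcal{F}$-closeness and preserve the $X$-level $\mathcal{V}$-closeness, transports the latter through each $g^{-1}$ with $g\in K$, and one needs $\psi$ to have small mesh on all these transported copies at once.
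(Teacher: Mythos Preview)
Your proof is correct and follows essentially the same approach as the paper: average a Property~A partition of unity for $X$ over a F\o lner-type object for $G$, then control the intra-orbit variation via amenability and the $X$-level variation by first enlarging $\mathcal{V}$ to $\mathcal{V}^{\ast}=\bigcup_{g\in K}g^{-1}\cdot\mathcal{V}$ before invoking exactness. The only cosmetic differences are that the paper uses a F\o lner set $E$ with uniform measure and averages via $x\mapsto k\cdot x$, whereas you use Reiter's condition (a general finitely supported probability $\mu$) and average via $x\mapsto g^{-1}x$; these are equivalent formulations of the same idea, and your three-term triangle-inequality decomposition is exactly the paper's split into inequalities~(1) and~(2).
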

\begin{proof}
We will prove that $X_G$ is exact. Let $\mathcal{X}$ be the large scale structure on $X$,  let $\st(\mathcal{U},\mathcal{F})$ be a uniformly bounded family in $X_G$, with $\mathcal{F} = \{ F \cdot x \mid x \in X \}$ and $\mathcal{U} \in \mathcal{X}$, and let $\varepsilon >0$. By the amenability of $G$, we have that there is a finite $E\subseteq G$ so that for all $g\in F\cdot F^{-1}$, 
	$$\frac{|E\Delta E\cdot g|}{|E|}<\varepsilon/3.$$
	Since G acts by coarse equivalences, we have that $g\cdot\mathcal{U}=\{g\cdot U\}_{U\in\mathcal{U}}$ is in $\mathcal{X}$ for all $g$ and hence  
	$$\mathcal{U}^E = \bigcup\limits_{k\in E}k\cdot\mathcal{U}$$
	 is also in $\mathcal{X}$.

	Since $X$ is exact, we have that there is a partition of unity $(\phi_i)_{i\in I}$ such that the family $\mathcal{V} = (V_i)_{i \in I}$ is in $\mathcal{X}$, where $V_i  = \{x \in X \mid \phi_i(x) \neq 0\}$, and for every $x,y\in U$ with $U\in\mathcal{U}^E$, we have 
	$$\sum\limits_{i\in I} |\phi_i(x)-\phi_i(y)|<\frac{\varepsilon}{3}.$$
	Define a new partition of unity $(\psi_i)_{i\in I}$ on $X$ via 
	$$\psi_i(x)=\frac{1}{|E|}\sum\limits_{k\in E} \phi_i(k\cdot x),$$
 and let $\mathcal{W} = (W_i)_{i \in I}$ be the cover of $X$ given by $W_i = \{x \in X \mid \psi_i(x) \neq 0\}$. We claim that $\mathcal{W}$ is uniformly bounded in $X_G$. Indeed, $x\in W_i$ implies $\psi_i(x)\neq 0$ so there is a $k\in E$ so that $k\cdot x\in V_i$. It follows that $\mathcal{W}$ refines the cover $\st(\mathcal{V}, \mathcal{E})$, where $\mathcal{E} = \bigcup_{k\in E} \{\{x,k\cdot x\} \mid x \in X \}$.
 
It remains to show that for any $x,y\in\st(U,\mathcal{F})$ with $U \in \mathcal{U}$, we have $\sum_{i\in I}|\psi_i(x)-\psi_i(y)|<\varepsilon$. It is enough to show that (1)
	 for any $x,y\in U$ we have $\sum_{i\in I}|\psi_i(x)-\psi_i(y)|<\varepsilon/3$ and (2) for $x\in X$ and $g,h\in F$ we have $\sum_{i\in I}|\psi_i(g\cdot x)-\psi_i(h\cdot x)|<\varepsilon/3$. 
	 
	 We first show inequality (1). Let $x,y\in U$ for some $U\in\bigcup\limits_{k\in E}k\cdot\mathcal{U}$. Then 
	 $$\sum\limits_{i\in I}|\psi_i(x)-\psi_i(y)|\leq \frac{1}{|E|}\sum\limits_{k\in E}\sum\limits_{i\in I}|\phi_i(k\cdot x)-\phi_i(k \cdot y)|.$$
	 For any $k\in E$, $x,y\in U$ implies $k\cdot x,k\cdot y\in k\cdot U \in \mathcal{U}^E$, so by the construction of the $\phi_i$, 
	 
	 $$\sum\limits_{i\in I}|\psi_i(x)-\psi_i(y)|\leq\frac{1}{|E|}\cdot |E|\cdot\frac{\varepsilon}{3}=\frac{\varepsilon}{3}.$$
	 We now show (2). Let $x\in X$ and $g,h\in F$. Then 
	 $$\sum\limits_{i\in I}|\psi_i(g\cdot x)-\psi_i(h\cdot x)|=\frac{1}{|E|}\sum\limits_{i\in I}|\sum\limits_{k\in E} \phi_i(k\cdot g\cdot x)-\phi_i(k\cdot h\cdot x)|.$$
	 Notice that if $k\cdot g\in E\cdot g\cap E\cdot h$, then the term $\phi_i(k\cdot g \cdot x)$ is cancelled out. So from the above we get
	 \begin{align*}
	 \sum\limits_{i\in I}|\psi_i(g\cdot x)-\psi_i(h\cdot x)|
	 & =\frac{1}{|E|}\sum\limits_{i\in I}|\sum\limits_{l\in E\cdot g\setminus E\cdot h}\phi_i(l\cdot x) \quad - \sum\limits_{m\in E\cdot h\setminus E\cdot g}\phi_i(m\cdot x)| \\
	 & \leq\frac{1}{|E|}\sum\limits_{l\in E\cdot g\Delta E\cdot h} \ \sum\limits_{i\in I}|\phi_i(l\cdot x)| \\
	 & = \frac{|E\cdot g\Delta E\cdot h|}{|E|}.
	 \end{align*}
	 since each $\phi_i$ is a partition of unity. But $|E\cdot g\Delta E\cdot h| = |E \Delta E\cdot h g^{-1}|$, so by the condition on $E$, we have 
	 $$
	 \frac{|E\cdot g\Delta E\cdot h|}{|E|} \leq \varepsilon/3
	 $$
	 as required.
\end{proof}

Note that by Proposition \ref{XGlight} in this paper and Corollary 9.4 in \cite{WeighillDydakMonotone}, $X$ has Property A if $X_G$ does, for any countable group $G$. The following corollary was already proved in \cite{DydakVirk}.

\begin{Corollary}
Let $G$ be a finite group which acts on a discrete bounded geometry metric space $X$ by coarse equivalences. If $X$ has Property A, then $X/G$ has Property A when endowed with the Hausdorff metric. 
\end{Corollary}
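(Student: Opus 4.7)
The plan is to deduce this immediately from the main result of this section combined with the earlier analysis of the finite-group case. Since every finite group is amenable, Theorem \ref{XGpropertyA} applies and tells us that $X_G$ has Property A. Then I invoke Proposition \ref{finiteG} to identify $X_G$, up to coarse equivalence, with $X/G$ equipped with the large scale structure $q(\mathcal{X})$, and Proposition \ref{obsfinitemetric} to identify this large scale structure with the one induced by the Hausdorff metric $d_{\mathsf{Haus}}$ on orbits.

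More concretely, first I would observe that $X_G$ has bounded geometry: this was already noted just before the definition of the maximal Roe algebra as a consequence of Lemma \ref{XGUF}, and for finite $G$ it is immediate since each orbit has size at most $|G|$. Then Theorem \ref{XGpropertyA} (applied to the finite, hence amenable, group $G$) gives that $X_G$ has Property A. By Proposition \ref{finiteG}, the natural set map $p: X_G \to X/G$ is a coarse equivalence when $X/G$ is given the large scale structure $q(\mathcal{X})$. Finally, Proposition \ref{obsfinitemetric} identifies $q(\mathcal{X})$ with the large scale structure induced by the Hausdorff metric $d_{\mathsf{Haus}}$.

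The conclusion then follows from the fact that Property A is a coarse invariant (for bounded geometry discrete metric spaces), which one can also read off from the definition of exactness used in Theorem \ref{XGpropertyA}: a partition of unity witnessing exactness on $X_G$ transfers along the coarse equivalence $p$ to a partition of unity witnessing exactness on $(X/G, d_{\mathsf{Haus}})$.

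I do not expect any genuine obstacle here; the proof is essentially an assembly of previously established facts, with the only subtlety being to make sure that the coarse equivalence $X_G \to (X/G, d_{\mathsf{Haus}})$ preserves bounded geometry so that Property A and the Dadarlat--Guentner exactness condition coincide on both sides. This is automatic because $|q^{-1}([x])| \leq |G| < \infty$, so the quotient map $q$ only collapses finitely many points together at a time.
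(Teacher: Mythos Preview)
Your proposal is correct and follows essentially the same route as the paper's proof: apply Theorem \ref{XGpropertyA} (finite groups are amenable), then use Propositions \ref{finiteG} and \ref{obsfinitemetric} together with coarse invariance of Property A. The extra remarks about bounded geometry are fine but not needed for the argument as stated in the paper.
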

\begin{proof}
Since Property A is invariant under coarse equivalence, this follow from Propositions \ref{finiteG} and \ref{obsfinitemetric} and the fact that any finite group is amenable. 
\end{proof}

The following result is a special case of Corollary 5.6.17 in \cite{BrownOzawa}. See \cite{WilletSpakulaMaximal} for a direct proof.

\begin{Theorem}[\cite{BrownOzawa}]
If $X$ is a bounded geometry discrete metric space with Yu's Property A, then the canonical quotient $\lambda: C^\ast_{\mathsf{max}}(X) \rightarrow C^\ast(X)$ is a $\ast$-isomorphism.
\end{Theorem}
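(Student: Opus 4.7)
My plan is to proceed via the coarse groupoid route pioneered by Skandalis, Tu, and Yu \cite{SkandalisTuYu}, which is also the strategy underlying Brown-Ozawa's proof. The first step is to recall the construction of the coarse groupoid $G(X) \rightrightarrows \beta X$ attached to a bounded geometry discrete metric space $X$: its unit space is the Stone-\v{C}ech compactification $\beta X$, and its arrow space is the union $\bigcup_{R > 0} \overline{E_R}$ computed inside $\beta(X \times X)$, where $E_R = \{(x,y) \in X \times X \mid d(x,y) \leq R\}$. One checks that $G(X)$ is an \'etale, locally compact, Hausdorff, principal groupoid whose topology is determined by the family of entourages.

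The second step is to establish canonical $\ast$-isomorphisms
$$
C^{\ast}_{\max}(G(X)) \otimes \mathcal{K}(H) \;\cong\; C^{\ast}_{\max}(X), \qquad C^{\ast}_r(G(X)) \otimes \mathcal{K}(H) \;\cong\; C^{\ast}(X),
$$
compatible with the canonical quotient maps. The idea is that a finite propagation, locally compact operator on $\ell^2(X) \otimes H$, when written as a matrix $(T_{x,y})$, is supported in some $E_R$ and has compact entries; after fixing an identification of $H$ with its standard form, such an operator corresponds to a compactly supported section of the groupoid, and the resulting assignment extends to the respective completions.

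The third step invokes the theorem of Skandalis-Tu-Yu stating that a bounded geometry discrete metric space $X$ has Property A if and only if the coarse groupoid $G(X)$ is topologically amenable. The fourth and final step invokes the theorem of Renault and Anantharaman-Delaroche: for a topologically amenable \'etale locally compact Hausdorff groupoid, the canonical surjection $C^{\ast}_{\max}(G(X)) \to C^{\ast}_r(G(X))$ is a $\ast$-isomorphism. Tensoring with $\mathcal{K}(H)$ and using the identifications of the second step yields the claim.

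The main obstacle is the second step, the identification of the (maximal and reduced) Roe algebras with stabilizations of the groupoid $C^\ast$-algebras. The subtlety is that the entries $T_{x,y}$ lie in $\mathcal{K}(H)$ rather than $\mathbb{C}$, so the identification requires fixing a copy of $\ell^2(\mathbb{N}) \cong H$ and treating the compact entries as matrix coefficients; verifying that the resulting map is well-defined on the maximal completion in particular requires checking that every representation of $C[X]$ factors through a representation of $C_c(G(X)) \odot \mathcal{K}(H)$. An alternative plan, avoiding groupoids, is the direct Schur-multiplier approach of Spakula-Willett \cite{WilletSpakulaMaximal}: use the positive definite kernel characterization of Property A to produce, for each $R, \varepsilon$, a positive definite kernel $\phi$ of finite propagation with $|\phi(x,y) - 1| < \varepsilon$ on $E_R$; argue (via a Stinespring-type factorization using positive definiteness) that the associated Schur multiplier on $C[X]$ extends to a completely positive contraction on both $C^{\ast}_{\max}(X)$ and $C^{\ast}(X)$ commuting with $\lambda$; and conclude that $\lambda$ is isometric by approximating each element of $C[X]$ in both norms by its images under such multipliers.
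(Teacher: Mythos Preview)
The paper does not give its own proof of this theorem: it simply cites it as a special case of Corollary~5.6.17 in \cite{BrownOzawa} and points to \cite{WilletSpakulaMaximal} for a direct argument. Your proposal is a correct outline of precisely these two referenced proofs---the coarse-groupoid route (Skandalis--Tu--Yu together with Anantharaman-Delaroche's theorem that amenability of an \'etale groupoid forces the full and reduced $C^\ast$-algebras to coincide) is exactly what underlies the Brown--Ozawa result, and your alternative via positive-definite kernels and Schur multipliers is the \v{S}pakula--Willett approach---so there is nothing to compare against and no gap to flag.
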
 

Recalling that the full crossed product and reduced crossed product agree for amenable groups, we have the following corollary.

\begin{Corollary}
Let $X$ be a bounded geometry discrete metric space with Yu's Property A, and let $G$ be an countable amenable group acting on $X$ coarsely discontinuously by coarse equivalences. Then we have an exact sequence
$$
0\to \mathcal{K} \to C^\ast(X_G) \to (C^\ast(X) / \mathcal{K}) \rtimes_{r, {\alpha}} G \to 0.
$$
\end{Corollary}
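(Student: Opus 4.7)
The plan is to assemble the exact sequence by applying, in sequence, the three main ingredients just collected: Theorem \ref{RoeXG}, Theorem \ref{XGpropertyA}, the Brown--Ozawa result recalled from \cite{BrownOzawa}, and the standard fact that the full and reduced crossed products coincide for amenable $G$. No new coarse-geometric or operator-algebraic input is required.

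First I would apply Theorem \ref{RoeXG} to the action of $G$ on $X$ to obtain the short exact sequence
$$
0 \to \mathcal{K} \to C^\ast_{\max}(X_G) \to (C^\ast_{\max}(X)/\mathcal{K}) \rtimes_\alpha G \to 0.
$$
Next, since $X$ has Property A and $G$ is amenable and countable, Theorem \ref{XGpropertyA} yields that $X_G$ also has Property A. Note that $X_G$ has bounded geometry by the remark after the definition of the maximal Roe algebra (this is an immediate consequence of Lemma \ref{XGUF}), so the Brown--Ozawa result applies to both $X$ and $X_G$, giving $\ast$-isomorphisms $C^\ast_{\max}(X) \cong C^\ast(X)$ and $C^\ast_{\max}(X_G) \cong C^\ast(X_G)$ induced by the canonical quotients. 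These isomorphisms send the compact ideal to the compact ideal, so they descend to $\ast$-isomorphisms $C^\ast_{\max}(X)/\mathcal{K} \cong C^\ast(X)/\mathcal{K}$ and $C^\ast_{\max}(X_G)/\mathcal{K} \cong C^\ast(X_G)/\mathcal{K}$, and moreover the former is equivariant for the $G$-action $\alpha$ because on the dense subalgebra $C[X]$ the action is given by the same formula $T \mapsto M_g T M_g^\ast$ in either picture.

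Since $G$ is amenable, the full and reduced crossed product functors agree, so
$$
(C^\ast_{\max}(X)/\mathcal{K}) \rtimes_\alpha G \;\cong\; (C^\ast(X)/\mathcal{K}) \rtimes_{r,\alpha} G
$$
via the canonical quotient, and this isomorphism is compatible with the one above through the map $\Phi''$ constructed in the proof of Theorem \ref{RoeXG}. Substituting these isomorphisms into the sequence from step one then produces the desired short exact sequence
$$
0 \to \mathcal{K} \to C^\ast(X_G) \to (C^\ast(X)/\mathcal{K}) \rtimes_{r,\alpha} G \to 0.
$$

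The proof is essentially bookkeeping: the only step requiring care is checking that the chain of isomorphisms at the level of quotient algebras and crossed products is genuinely compatible with the sequence of Theorem \ref{RoeXG}, i.e.\ that the induced maps on the quotients are still $\ast$-isomorphisms and that the $G$-action intertwines. This is routine because the Brown--Ozawa isomorphism is the identity on the common dense subalgebra $C[X]$, on which both the ideal $\mathcal{K}$ and the $G$-action $\alpha$ are defined in a purely algebraic, representation-independent way. There is no genuinely hard step.
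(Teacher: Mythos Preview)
Your proposal is correct and follows exactly the approach the paper intends: the paper does not give a separate proof of this corollary but simply precedes it with the sentence ``Recalling that the full crossed product and reduced crossed product agree for amenable groups, we have the following corollary,'' expecting the reader to combine Theorem~\ref{RoeXG}, Theorem~\ref{XGpropertyA}, and the Brown--Ozawa result precisely as you do. Your write-up is in fact a careful spelling-out of what the paper leaves implicit, including the compatibility checks for the ideal $\mathcal{K}$ and the $G$-action, which are indeed routine for the reasons you give.
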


By Proposition \ref{finiteG}, if $G$ is finite, $X_G$ is coarsely equivalent to $X/G$. This coarse equivalence gives rise to a (non-canonical) $\ast$-isomorphism $\phi: C^\ast_{\max}(X_G) \rightarrow C^\ast_{\max}(X/G)$. The map $\phi$ is constructed as follows: let $H = \oplus_{g \in G} H_g$ be an orthogonal decomposition of $H$ into infinite dimensional subspaces, and let $\psi_g: H \rightarrow H_g$ be a unitary isometry for every $g \in G$. For each equivalence class $[x] \in X/G$, choose a representative $s([x]) \in [x]$. Then we can define a unitary operator
$$
U_{p} : \ell^2(X_G) \otimes H \rightarrow \ell^2(X/G) \otimes H  
$$
by $U_{p}(\delta_x \otimes h) =  \delta_{[x]} \otimes \psi_g(h)$ where $x = g\cdot s([x])$. We can then define an isometry
$$
\mathsf{Ad} U_p: C^\ast[X_G] \rightarrow C^\ast[X/G]
$$
by $\mathsf{Ad} U_p (T) = U_p T U_p^\ast$, which extends to an isometry $\phi: C^\ast_{\max}(X_G) \rightarrow C^\ast_{\max}(X/G)$ as required. Since $\phi$ preserves all the rank-one projections, it also preserves the compact operators. Thus we obtain the following corollary of Theorem \ref{RoeXG}, Propositions \ref{quotientmetricls2} and \ref{finiteG}.

\begin{Corollary}
Let $G$ be a finite group acting coarsely discontinuously by coarse equivalences on a discrete bounded geometry metric space $X$, and let $X/G$ be the orbit space with the Hausdorff metric. Then there is a $\ast$-isomorphism
$$
\xymatrix{
\left( C^\ast_{\max}(X) / \mathcal{K} \right) \rtimes_{r, \alpha} G
\ar[r]^(.55){\cong} & 
C^\ast_{\max}(X/G) / \mathcal{K}
}
$$
Moreover, if $X$ has Property A, then the maximal Roe algebra above can be replaced by the usual Roe algebra.
\end{Corollary}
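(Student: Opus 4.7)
The plan is to chain together Theorem \ref{RoeXG} with the coarse-equivalence identification $X_G \simeq X/G$ for finite $G$, and use amenability to go from the full crossed product to the reduced one. Since $G$ is finite (hence amenable), the full and reduced crossed products coincide, so applying Theorem \ref{RoeXG} to the coarsely discontinuous action of $G$ on $X$ already yields
$$
\left( C^\ast_{\max}(X)/\mathcal{K} \right) \rtimes_{r,\alpha} G \;\cong\; C^\ast_{\max}(X_G)/\mathcal{K}.
$$
So the first thing to do is to rewrite the right-hand side in terms of $X/G$ rather than $X_G$.

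For that, I would use the explicit $\ast$-isomorphism $\phi: C^\ast_{\max}(X_G) \to C^\ast_{\max}(X/G)$ constructed in the paragraph preceding the statement: pick a section $s: X/G \to X$ of the orbit quotient, decompose $H = \bigoplus_{g \in G} H_g$, use unitaries $\psi_g: H \to H_g$, and define the unitary $U_p: \ell^2(X_G) \otimes H \to \ell^2(X/G) \otimes H$ by $U_p(\delta_x \otimes h) = \delta_{[x]} \otimes \psi_g(h)$ where $x = g \cdot s([x])$. Conjugation by $U_p$ carries $C[X_G]$ into $C[X/G]$ because finite propagation is preserved (using Proposition \ref{obsfinitemetric} to identify the large scale structures on $X_G$ and $X/G$ via the Hausdorff metric, and the fact that $|G|$ is finite to control the spreading). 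Extending to maximal completions gives $\phi$, and the key observation is that $\phi$ sends rank-one operators to rank-one operators, so it restricts to an isomorphism on the ideal $\mathcal{K}$ and descends to a $\ast$-isomorphism on the quotients $C^\ast_{\max}(X_G)/\mathcal{K} \cong C^\ast_{\max}(X/G)/\mathcal{K}$. Composing with the isomorphism from Theorem \ref{RoeXG} yields the required isomorphism.

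For the moreover statement, I would invoke Theorem \ref{XGpropertyA}: since $X$ has Property A and $G$ (being finite) is amenable, $X_G$ has Property A. By Proposition \ref{finiteG} together with Proposition \ref{obsfinitemetric}, $X/G$ with the Hausdorff metric is coarsely equivalent to $X_G$, and Property A is a coarse invariant, so $X/G$ has Property A as well. The Brown--Ozawa theorem cited above then gives $C^\ast_{\max}(X) \cong C^\ast(X)$ and $C^\ast_{\max}(X/G) \cong C^\ast(X/G)$ canonically, which lets me replace both maximal Roe algebras by the usual Roe algebras in the isomorphism.

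The whole argument is an assembly of results already in place, so there is no real obstacle; the only point that requires a little care is verifying that the non-canonical map $\phi$ genuinely preserves the ideal of compact operators (which follows because rank-one matrix units go to rank-one matrix units under $\mathrm{Ad}\,U_p$), and that the action of $G$ on $C^\ast_{\max}(X)/\mathcal{K}$ matches up on both sides so that the crossed product interpretation is consistent. Everything else is bookkeeping.
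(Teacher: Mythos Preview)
Your argument is correct and follows essentially the same route as the paper: apply Theorem \ref{RoeXG}, use finiteness of $G$ to pass to the reduced crossed product, transport through the explicit $\mathrm{Ad}\,U_p$ isomorphism (which preserves $\mathcal{K}$), and for the ``moreover'' invoke Theorem \ref{XGpropertyA} together with the Brown--Ozawa result. Your write-up is in fact a bit more explicit than the paper's, which simply cites Theorem \ref{RoeXG} and Proposition \ref{finiteG} after describing the $U_p$ construction.
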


\begin{Example}
Let $G = \{1, \gamma\} \cong \mathbb{Z}/2\mathbb{Z}$ and let $G$ act on the metric space $X = \mathbb{Z}$ by $\gamma(x) = -x$. This action is coarsely discontinuous, and the quotient $X /G$ is coarsely equivalent to $\mathbb{N}$. Since $\mathbb{Z}$ has Property A and $\mathbb{Z}/2\mathbb{Z}$ is amenable, we have a $\ast$-isomorphism
$$
\xymatrix{
\left( C^\ast(\mathbb{Z}) / \mathcal{K} \right) \rtimes_{r} \mathbb{Z}/2\mathbb{Z}
\ar[r]^(.6){\cong} & 
C^\ast(\mathbb{N}) / \mathcal{K}
}
$$
\end{Example}

\begin{Example}
Recall from Remark \ref{XGrem1} that if $G$ is a finitely generated group and $X$ is the underlying set of $G$ equipped with the smallest coarsely connected large scale structure, then $X_G$ is coarsely equivalent to $G$, where the action of $G$ is by right translation. By similar arguments to the proof of Corollary \ref{YuCorollary}, one can check that $C^\ast_{\max}(X)/\mathcal{K}$ is naturally isomorphic to $\ell^{\infty}(|G|, \mathcal{K}(H)) / C_0(|G|, \mathcal{K}(H))$ where $|G|$ is the underlying set of $G$. It follows that there is a natural isomorphism
$$
C^\ast_{\max}(G)/\mathcal{K} \cong \ell^{\infty}(|G|, \mathcal{K}(H)) / C_0(|G|, \mathcal{K}(H)) \rtimes G.
$$
where the action of $G$ on $\ell^{\infty}(|G|, \mathcal{K}(H)) / C_0(|G|, \mathcal{K}(H))$ is given by right translation: $g \cdot f(h) = f(hg^{-1})$.
Compare this result to Theorem 4.28 in \cite{Roe} which states that 
$$
C^\ast_u(G)  \cong \ell^\infty(|G|) \rtimes_r G
$$
where $C^\ast_u(-)$ denotes the uniform Roe algebra.
\end{Example}

\section{Weak coarse quotient maps} \label{Sectypesofquotients}
In this section, we introduce the notion of weak coarse quotient map, of which $p_G: X \rightarrow X_G$ is an example when $G$ is finitely generated, and motivate the definition using the coarse category.

Recall that given a topological space $X$ and a surjective set map $f$ from the underlying set of $X$ to a set $Y$, the quotient topology on $Y$ is the finest topology that makes $f$ continuous. We now introduce an analogous notion for large scale spaces, based on the definition of quotient coarse structure in \cite{DikranjanZava}.

\begin{Definition}
Let $X$ be a large scale space and let $f$ be a surjective set map from the underlying set of $X$ to a set $Y$. Then the \textbf{quotient large scale structure on $Y$} is defined to be $\overline{f(\mathcal{X})}$ where $\mathcal{X}$ is the large scale structure on $X$ and $f(\mathcal{X})$ is the collection $\{ f(\mathcal{U}) \mid \mathcal{U} \in \mathcal{X} \}$. 
\end{Definition}

Clearly the quotient large scale structure is the smallest large scale structure which makes the map $f$ large scale continuous. The quotient large scale structure also has a universal property. In fact, the existence of the quotient large scale structure and the universal property follow from general categorical considerations in \cite{DikranjanZava}. For completeness, we present a direct proof here.

\begin{Proposition}\label{strictuniversalQuotient}
Let $f: X \rightarrow Y$ be a surjective large scale continuous map. Then $Y$ has the quotient large scale structure with respect to $f$ if and only $f$ satisfies the following universal property:
\begin{itemize}
\item[$\mathsf{(Q1)}$] for any large scale continuous map $g: X \rightarrow Z$ which is constant on the fibres of $f$, there is a unique large scale continuous map $h$ from $Y$ to $Z$ such that $hf = g$. 
\end{itemize}
\end{Proposition}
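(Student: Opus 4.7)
The plan is to prove the two implications separately. In both directions the essential ingredient is Lemma~\ref{generatingMap}, which tells us that set-theoretic images commute with the closure operation defining a large scale structure in a one-sided way that is enough for our purposes.

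For the forward direction, assume $Y$ carries the quotient large scale structure $\overline{f(\mathcal{X})}$, and let $g\colon X\to Z$ be large scale continuous and constant on the fibres of $f$. Since $f$ is surjective, there is a unique set map $h\colon Y\to Z$ with $hf=g$; only large scale continuity of $h$ requires proof. Applying Lemma~\ref{generatingMap} to $h$ gives $h(\overline{f(\mathcal{X})})\subseteq \overline{h(f(\mathcal{X}))}=\overline{g(\mathcal{X})}$, and since $g$ is large scale continuous the family $g(\mathcal{X})$ is already contained in the large scale structure $\mathcal{Z}$ of $Z$, hence so is its closure. This says exactly that every uniformly bounded family in $Y$ maps to a uniformly bounded family in $Z$, giving $(Q1)$.

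For the reverse direction, assume $(Q1)$ holds, and let $\mathcal{Y}$ denote the given large scale structure on $Y$. Since $f\colon X\to Y$ is large scale continuous we have $f(\mathcal{X})\subseteq \mathcal{Y}$, and because $\mathcal{Y}$ is a large scale structure we deduce $\overline{f(\mathcal{X})}\subseteq \mathcal{Y}$. For the reverse containment, write $Y'$ for the set $Y$ equipped with the quotient large scale structure $\overline{f(\mathcal{X})}$, and regard $f$ as a map $g\colon X\to Y'$; this $g$ is large scale continuous by construction and is trivially constant on the fibres of $f$. Invoking $(Q1)$ produces a unique large scale continuous $h\colon Y\to Y'$ with $hf=g=f$, and since $f$ is surjective this forces $h$ to be the identity set map. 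Thus the identity map $(Y,\mathcal{Y})\to(Y,\overline{f(\mathcal{X})})$ is large scale continuous, which yields $\mathcal{Y}\subseteq \overline{f(\mathcal{X})}$, and the two structures coincide.

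There is no real obstacle here; the only point to watch is that the uniqueness clause of $(Q1)$ is free once $f$ is surjective, so each implication reduces to checking large scale continuity of a single induced set map, and Lemma~\ref{generatingMap} handles that in both cases.
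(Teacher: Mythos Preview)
Your proof is correct and follows essentially the same approach as the paper: the forward direction uses Lemma~\ref{generatingMap} to check large scale continuity of the induced map $h$, and the reverse direction applies $(Q1)$ to the identity-on-sets map into $Y'$ equipped with the quotient structure $\overline{f(\mathcal{X})}$. The only minor quibble is your closing remark that ``Lemma~\ref{generatingMap} handles that in both cases'': in the reverse direction the large scale continuity of $h=\mathrm{id}_Y$ is delivered directly by $(Q1)$, not by Lemma~\ref{generatingMap}, but this is just a slip in the summary and not in the argument itself.
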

\begin{proof}
$(\Rightarrow)$ Let $\mathcal{X}$ be the large scale structure on $X$. The map $h$ is uniquely defined: $h(f(x)) = g(x)$. The fact that $h$ is large scale continuous follows from Lemma \ref{generatingMap}: 
$$
h(\overline{f(\mathcal{X})}) \subseteq \overline{hf(\mathcal{X})} = \overline{g(\mathcal{X})} \subseteq \mathcal{Z}
$$
where $\mathcal{Z}$ is the large scale structure on $Z$.

$(\Leftarrow)$ Suppose $\mathsf{(Q1)}$ holds. Consider the map $f': X \rightarrow Y'$, where $Y'$ is the underlying set of $Y$ equipped with the quotient large scale structure and $f'$ is the same as $f$ at the level of underlying sets. It follows that the identity set map $Y \rightarrow Y'$ must be large scale continuous, and the result follows from this.
\end{proof}

We may be tempted to define a weak coarse quotient map as a surjective large scale continuous map $f: X \rightarrow Y$ such that $Y$ has the quotient large scale structure with respect to $f$. The problem with this idea is that such a definition is not very ``coarse''. Indeed, we should expect a class of maps $\mathcal{E}$ defined by a large scale property to satisfy the following conditions:
\begin{itemize}
\item[$\mathsf{(LS1)}$] if $f$ is close to $g$, and $f$ is in $\mathcal{E}$, then so is $g$;
\item[$\mathsf{(LS2)}$] if $f$ is a large scale continuous map, and $\phi$ and $\psi$ are coarse equivalences such that the composite $\phi f \psi$ is defined, then $f$ is in $\mathcal{E}$ if and only if $\phi f \psi$ is in $\mathcal{E}$.
\end{itemize}
In fact, as the following lemma shows, $\mathsf{(LS2)}$ implies $\mathsf{(LS1)}$.

\begin{Proposition}
If a class $\mathcal{E}$ of large scale continuous maps satisfies $\mathsf{(LS2)}$ then it also satisfies $\mathsf{(LS1)}$.
\end{Proposition}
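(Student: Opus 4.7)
The plan is to reduce closeness to pre/post-composition with coarse equivalences via a "doubling trick". Given $f \sim g : X \to Y$ with $f \in \mathcal{E}$, I would interpolate between $f$ and $g$ by packaging them into a single large scale continuous map out of a space that retracts coarsely onto $X$ in two different ways.

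Concretely, I would let $D$ denote the two-point set $\{0,1\}$ equipped with its (unique) bounded large scale structure, and consider the product space $X \times D$ with the product large scale structure as defined at the end of Section 2. Then I would define a single set map
\[
\tilde f : X \times D \to Y, \qquad \tilde f(x,0) = f(x),\ \tilde f(x,1) = g(x),
\]
and check that $\tilde f$ is large scale continuous. A uniformly bounded family on $X \times D$ is refined by one of the form $\mathcal{U} \times \{\{0,1\}\}$ with $\mathcal{U}$ uniformly bounded in $X$, so the image consists of sets $f(U) \cup g(U)$; these are refined by $\st(f(\mathcal{U}), \mathcal{C})$ where $\mathcal{C} = \{\{f(x), g(x)\} \mid x \in X\}$ witnesses $f \sim g$, and this star is uniformly bounded because both $f(\mathcal{U})$ and $\mathcal{C}$ are.

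Next I would check that the inclusions $\iota_0, \iota_1 : X \to X \times D$ at heights $0$ and $1$ are coarse equivalences. Large scale continuity and the coarse embedding condition follow immediately from the product structure, and coarse surjectivity follows because the family $\{\{x\} \times D \mid x \in X\}$ is uniformly bounded in $X \times D$ and stars $\iota_i(X)$ out to all of $X \times D$. With these in hand, $f = \tilde f \circ \iota_0$ and $g = \tilde f \circ \iota_1$ are both of the form $\phi \, \tilde f \, \psi$ for coarse equivalences $\phi = \mathrm{id}_Y$ and $\psi = \iota_i$. Applying $\mathsf{(LS2)}$ twice yields $f \in \mathcal{E} \Leftrightarrow \tilde f \in \mathcal{E} \Leftrightarrow g \in \mathcal{E}$, which is $\mathsf{(LS1)}$.

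I do not expect any real obstacle: the only substantive verification is that $\tilde f$ is large scale continuous, which is exactly the content of the remark in Section 2.2 that any map close to a large scale continuous map is itself large scale continuous, repackaged on the doubled domain. Everything else is essentially formal once products and the bounded space $D$ are available.
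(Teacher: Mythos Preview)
Your proof is correct. Both your argument and the paper's follow the same meta-strategy: factor $f$ and $g$ as a common large scale continuous map pre-composed with two different coarse equivalences, then apply $\mathsf{(LS2)}$ twice. The difference lies in the choice of intermediate object. You double the \emph{domain}, working with $X \times D$ and the single map $\tilde f$ that records $f$ on one sheet and $g$ on the other; the paper instead thickens the \emph{graph}, taking $X' = \{(x,y) \in X \times Y \mid f(x)\,\mathcal{U}\, y\}$ for a cover $\mathcal{U}$ witnessing $f \sim g$, using the second projection $\pi_2: X' \to Y$ as the common map, and the two graph-inclusions $x \mapsto (x,f(x))$ and $x \mapsto (x,g(x))$ as the coarse equivalences. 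Your construction is arguably cleaner in that the intermediate space does not depend on the witnessing cover and the coarse-equivalence verifications are immediate from the product structure; the paper's construction is the familiar ``mapping cylinder'' idea and makes the role of the closeness witness $\mathcal{U}$ explicit. Either route works without difficulty.
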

\begin{proof}
Let $f: X \rightarrow Y$ be in $\mathcal{E}$, and suppose $g$ is a map whose closeness to $f$ is witnessed by the uniformly bounded cover $\mathcal{U}$. Let $X'$ be the subspace of the product $X \times Y$ given by
$$
\{ (x,y) \mid f(x)\mathcal{U} y \}.
$$
The map $i: X \rightarrow X'$ given by $x \mapsto (x, f(x))$ is a coarse equivalence, and we have $\pi_2 \circ i = f$, where $\pi_2: X' \rightarrow Y$ is the projection onto the second coordinate. It follows that $\pi_2$ is in $\mathcal{E}$ by $\mathsf{(LS2)}$. We have a map $j: X \rightarrow X'$ given by $x \mapsto (x, g(x))$, which is also a coarse equivalence, such that $\pi_2 \circ j = g$. Applying condition $\mathsf{(LS2)}$, we have that $g$ is in $\mathcal{E}$ as well.
\end{proof}

The class of all surjective maps $f$ whose codomain carries the quotient large scale structure with respect to $f$ satisfies neither $\mathsf{(LS1)}$ nor $\mathsf{(LS2)}$. We thus introduce the following definition of weak coarse quotient map instead.

\begin{Definition}
Let $f: X \rightarrow Y$ be a large scale continuous map. Then $f$ is a \textbf{weak coarse quotient map} if it is coarsely surjective and there exists a uniformly bounded cover $\mathcal{V}$ of $Y$ such that the large scale structure on $Y$ is generated by $f(\mathcal{X}) \cup \{\mathcal{V}\}$, where $\mathcal{X}$ is the large scale structure on $X$. A cover $\mathcal{V}$ satisfying this property is called a \textbf{quotient scale} of $f$. 
\end{Definition}

\begin{Observation}\label{obsS}
Consider a group action $G$ on a large scale space $X$. If the group $G$ is generated by a finite set $S$, then the collection of all families 
$$
 \{ \{x, gx\} \mid x \in X \}
$$
is contained in the large scale structure generated by the single family
$$
 \mathcal{S} = \{ S \cdot x \mid x \in X \},
$$
so that the identity set map $X \rightarrow X_G$ is a weak coarse quotient map with quotient scale $\mathcal{S}$. 
\end{Observation}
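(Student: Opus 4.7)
The plan is to prove by induction on word length in $S$ that each family $\{\{x, gx\} \mid x \in X\}$ lies in the large scale structure generated by $\mathcal{S}$. Without loss of generality I enlarge $S$ once and for all so that $e \in S$; the enlarged set is still finite and generating, and the enlarged family $\mathcal{S}$ remains uniformly bounded in $X_G$ by Lemma~\ref{XGUF}, so it is still a candidate quotient scale.

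For the base case, take $s \in S \cup S^{-1}$. If $s \in S$, then $\{x, sx\} = \{e \cdot x, s \cdot x\} \subseteq S \cdot x$, so the family $\{\{x, sx\} \mid x \in X\}$ refines $\mathcal{S}$. If $s = t^{-1}$ for some $t \in S$, the same idea applied at the base point $y = t^{-1}x$ gives $\{x, t^{-1}x\} = \{t \cdot y, e \cdot y\} \subseteq S \cdot y$, and again the family refines $\mathcal{S}$. For the inductive step, write $g = sh$ with $s \in S \cup S^{-1}$ and $h$ strictly shorter. By induction $\{\{x, hx\}\}$ is in the generated structure, and by the base case so is $\{\{y, sy\}\}$. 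I compute $\st(\{x,hx\}, \{\{y, sy\} \mid y \in X\})$: the $y$ for which $\{y, sy\}$ meets $\{x, hx\}$ are precisely $y \in \{x, hx, s^{-1}x, s^{-1}hx\}$, and the union of the corresponding two-element sets is $\{x, sx, hx, shx, s^{-1}x, s^{-1}hx\}$. Since $gx = shx$, this contains $\{x, gx\}$, so $\{\{x, gx\} \mid x \in X\}$ refines the star and hence lies in the generated structure.

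The weak coarse quotient conclusion then follows formally: $p_G$ is coarsely surjective (it is a bijection of underlying sets), $\mathcal{S}$ is uniformly bounded in $X_G$ by Lemma~\ref{XGUF}, and Definition~\ref{defXG} states that the structure on $X_G$ is generated by $\mathcal{X}$ together with all the families $\{\{x, gx\}\}$, which by the induction already lie in the structure generated by $\mathcal{X} \cup \{\mathcal{S}\}$; the two structures therefore coincide. The step I expect to require most care is the base case: $e \in S$ is used in an essential way to place both endpoints of $\{x, sx\}$ in a single orbit $S \cdot y$, and in the free group $F_2$ with $S = \{a,b\}$ one can check that without this convention $\{\{x, abx\}\}$ fails to lie in the structure generated by $\mathcal{S}$ alone (it would force $ab \in \langle SS^{-1}\rangle$, which is a proper subgroup of $F_2$), so the harmless enlargement of $S$ at the start is what makes the induction go through.
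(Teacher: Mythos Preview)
Your argument is correct. The paper states this as an Observation and gives no proof at all, so there is nothing to compare at the level of method; your induction on word length in $S\cup S^{-1}$ is exactly the kind of one-line verification the authors presumably had in mind, carried out carefully. The base case and the star computation in the inductive step are both right: $\st(\{x,hx\},\{\{y,sy\}\})$ does contain $\{x,shx\}$, which is all that is needed.

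You have also caught a genuine subtlety the paper glosses over. As literally written the Observation asserts that the families $\{\{x,gx\}\}$ lie in the large scale structure generated by $\mathcal{S}=\{S\cdot x\}$ for \emph{any} finite generating set $S$, and your $F_2$ example with $S=\{a,b\}$ shows this can fail: iterated stars of $\mathcal{S}$ produce only sets of the form $S(S^{-1}S)^n x$, so every bounded set lies in a single left translate of $S\langle S^{-1}S\rangle$, and one checks that no such set can contain $\{x,abx\}$. Your fix---enlarging $S$ to contain $e$---is exactly the right one, and the paper itself tacitly adopts this convention later when it applies the Observation (the Corollary following Proposition~\ref{metricweakquotient} explicitly takes $S$ symmetric and containing the identity). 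The only minor imprecision in your write-up is the parenthetical ``it would force $ab\in\langle SS^{-1}\rangle$'': the actual obstruction is that $ab\notin S\langle S^{-1}S\rangle S^{-1}$, but this does not affect the validity of the example.
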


Note that if $\mathcal{V}$ is a quotient scale for the weak coarse quotient map $f: X \rightarrow Y$, then so is any uniformly bounded coarsening of $\mathcal{V}$. In particular, we can always pick a quotient scale $\mathcal{V}$ such that $Y \subseteq \st(f(X), \mathcal{V})$. A weak coarse quotient map satisfies a universal property.

\begin{Proposition} \label{universalQuotient}
Let $f: X \rightarrow Y$ be a large scale continuous map and let $\mathcal{V}$ be a uniformly bounded cover of $Y$. Then the following are equivalent.
\begin{itemize}
\item[(a)] $f$ is a weak coarse quotient map with quotient scale $\mathcal{V}$,
\item[(b)] for any large scale continuous map $g: X \rightarrow Z$ such that $g(f^{-1}(\mathcal{V}))$ is uniformly bounded, there exists a unique-up-to-closeness map $h: Y \rightarrow Z$ such that $hf$ is close to $g$.
\end{itemize}
\end{Proposition}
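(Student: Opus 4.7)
The plan is to prove $(a) \Rightarrow (b)$ by constructing $h$ explicitly via a set-theoretic coarse section of $f$, and $(b) \Rightarrow (a)$ by applying the universal property to $g = f$ viewed as a map into $Y$ equipped with the LS structure generated by $f(\mathcal{X}) \cup \{\mathcal{V}\}$.

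For $(a) \Rightarrow (b)$, I would first use coarse surjectivity together with the axiom of choice to pick a set map $s \colon Y \to X$ such that $\{y, f(s(y))\}$ lies in some element of $\mathcal{V}$ for every $y$, arranging in addition that $s(y) \in f^{-1}(y)$ whenever $y \in f(X)$. Setting $h := g \circ s$, the closeness $hf \sim g$ is witnessed by $g(f^{-1}(\mathcal{V}))$, since for any $x \in X$ both $x$ and $s(f(x))$ lie in $f^{-1}(f(x))$. The main obstacle is verifying that $h$ is LS continuous. By Lemma \ref{generatingMap} together with the fact that any set map respects stars of families up to refinement, it suffices to check that $h(\mathcal{V})$ and $h(f(\mathcal{U}))$ are uniformly bounded in $Z$; the second case is immediate from $h(f(\mathcal{U})) = (hf)(\mathcal{U})$. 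For the first, the argument proceeds via a two-step bridge through $f(X)$: for $V \in \mathcal{V}$ and $y \in V$, set $z_y = f(s(y)) \in f(X)$. Since $y, z_y \in V_y$ for some $V_y \in \mathcal{V}$ containing $y$, one has $V_y \subseteq \st(V, \mathcal{V})$, while $s(y)$ and $s(z_y) \in f^{-1}(z_y)$ both lie in $f^{-1}(V_y)$, giving $\{h(y), h(z_y)\} \subseteq g(f^{-1}(V_y)) \in g(f^{-1}(\mathcal{V}))$. A direct check on the generators $f(\mathcal{X})$ and $\{V \cap f(X) : V \in \mathcal{V}\}$ shows that $h|_{f(X)}$ is LS continuous on the restrictions to $f(X)$ of generators of $\mathcal{Y}$, from which one deduces that the family $\{\{h(z_y) : y \in V\}\}_{V \in \mathcal{V}}$ is uniformly bounded; hence $h(\mathcal{V})$ refines the star of two uniformly bounded families in $Z$. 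Uniqueness of $h$ up to closeness follows from Lemma \ref{epimono}: coarse surjectivity of $f$ means $[f]$ is an epimorphism in $\mathbf{Coarse}/\sim$.

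For $(b) \Rightarrow (a)$, let $\mathcal{Y}_0 = \overline{f(\mathcal{X}) \cup \{\mathcal{V}\}}$ and $Y_0 = (Y, \mathcal{Y}_0)$. Apply (b) to the map $f \colon X \to Y_0$, which is LS continuous since $f(\mathcal{X}) \subseteq \mathcal{Y}_0$ and satisfies $f(f^{-1}(\mathcal{V})) \leq \mathcal{V} \in \mathcal{Y}_0$ uniformly bounded; this produces an LS continuous $h \colon Y \to Y_0$ with $hf \sim f$. A short chain argument through points $y^* \in f(X)$ close to each $y \in Y$ at scale $\mathcal{V}$ (available by coarse surjectivity, using the three segments $y \mathcal{V} y^*$, $y^* \sim h(y^*)$ from $hf \sim f$, and $\{h(y), h(y^*)\} \in h(\mathcal{V})$) shows $h \sim \mathrm{id}_Y$ in $\mathcal{Y}_0$; hence the identity $Y \to Y_0$ is close to an LS continuous map and is itself LS continuous, giving $\mathcal{Y} \subseteq \mathcal{Y}_0$, while the reverse inclusion is automatic. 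The coarse surjectivity part of (a) is obtained from the uniqueness clause in (b) by a perturbation argument: if it failed, from any $h$ satisfying the universal property for the zero map into an unbounded $Z$ one could construct a second LS continuous map $h'$ differing from $h$ by unbounded amounts on a ``bad'' part of $Y$ lying outside every $\st(f(X), \mathcal{W})$, contradicting uniqueness.
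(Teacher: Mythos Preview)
Your approach is essentially the paper's in both directions: construct $h=g\circ s$ via a coarse section for $(a)\Rightarrow(b)$, and for $(b)\Rightarrow(a)$ test the universal property against $f$ landing in $Y_0=(Y,\overline{f(\mathcal{X})\cup\{\mathcal{V}\}})$. Two remarks on the details.

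In $(b)\Rightarrow(a)$ you invoke coarse surjectivity in the chain argument before you have established it, and your perturbation argument via a ``zero map'' is more elaborate than necessary. The paper handles this more directly: the uniqueness clause in (b) immediately makes $[f]$ an epimorphism in $\mathbf{Coarse}/\sim$ (apply (b) with $g=h_1 f$ whenever $h_1 f\sim h_2 f$; note $g(f^{-1}(\mathcal{V}))\leq h_1(\mathcal{V})$ is uniformly bounded), and then Lemma~\ref{epimono} gives coarse surjectivity. Once that is in hand, the rest of your argument for $\mathcal{Y}=\mathcal{Y}_0$ matches the paper's.

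In $(a)\Rightarrow(b)$ you give more care than the paper to the verification that $h(\mathcal{V})$ is uniformly bounded, but your two-step bridge has a gap. You check $h|_{f(X)}$ on the restrictions $f(\mathcal{X})$ and $\mathcal{V}\cap f(X)$ and then want to bound $\{\{h(z_y):y\in V\}\}_{V\in\mathcal{V}}$; but $\{z_y:y\in V\}$ only lies in $\st(V,\mathcal{V})\cap f(X)$, and $\st(\mathcal{V},\mathcal{V})|_{f(X)}$ need not belong to the large scale structure on $f(X)$ \emph{generated} by $f(\mathcal{X})$ and $\mathcal{V}|_{f(X)}$, because restriction to a subspace does not commute with the star operation. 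Concretely, take $X=2\mathbb{Z}$ with the smallest coarsely connected structure, $Y=\mathbb{Z}$ with the metric structure, $f$ the inclusion, and $\mathcal{V}=\{\{n,n+1\}:n\in\mathbb{Z}\}$: then $\mathcal{V}|_{f(X)}$ consists of singletons, so the structure it generates together with $f(\mathcal{X})$ is still the cofinite one, whereas $\st(\mathcal{V},\mathcal{V})|_{f(X)}=\{\{2k,2k+2\}\}_k$ is not in it. The paper's proof is terser at this point and does not isolate $h(\mathcal{V})$ explicitly, so this is a shared subtlety rather than a defect peculiar to your write-up.
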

\begin{proof}
(a) $\Rightarrow$ (b): Without loss of generality, choose the weak coarse quotient scale $\mathcal{V}$ so that $Y\subseteq \st(f(X),\mathcal{V})$. Suppose there is a large scale continuous $g:X\to Z$ so that $g( f^{-1}(\mathcal{V}))\in\mathcal{Z}$ where $\mathcal{Z}$ is the large scale structure on $Z$. 
		We define a map $h:Y\to Z$. Let $y\in Y$. Then we can pick a $V\in\mathcal{V}$ and an $x \in X$ so that $y\in V$ and $f(x)\in  V$. Define $h(y)=g(x)$. We claim that $h\circ f$ and $g$ are close.
		Indeed, let $x\in X$. Then $hf(x) = g(x')$ for some $x'$ such that $f(x') \mathcal{V} f(x)$. It follows that $g(f^{-1}(\mathcal{V}))$ witnesses the closeness of $g$ and $h \circ f$. The uniqueness up to closeness follows from the fact that $f$ is an epimorphism in the coarse category. Finally, if $\mathcal{U}$ is a uniformly bounded family in $X$, then $hf(\mathcal{U})$ refines $\st(g(\mathcal{U}), g(f^{-1}(\mathcal{V})))$. This, together with the fact that $\mathcal{V}$ and $f(\mathcal{X})$ together generate the large scale structure on $Y$, give that $h$ is large scale continuous.
		
		(b) $\Rightarrow$ (a): It is easy to check that $f$ must be an epimorphism, and hence coarsely surjective. Let $Y'$ be the underlying set of $Y$ with the large scale structure generated by $\mathcal{V}$ and $f(\mathcal{X})$. By hypothesis, there is a large scale continuous map $Y \rightarrow Y'$, which must be close to the identity on $f(X)$ and thus also on all of $Y$. Since the identity map is close to a large scale continuous map, it is itself large scale continuous, and it follows easily that $Y'$ and $Y$ have the same large scale structure.
		\end{proof}

Recall that for a category $\mathbb{C}$ and two morphisms $f,g: X \rightarrow Y$, a \textbf{coequalizer} of $f$ and $g$ is a morphism $h: Y \rightarrow Z$ such that $hf = hg$ and such that if $h': Y \rightarrow Z'$ is another morphism such that $h'f = h'g$, then there exists a unique morphism $i: Z \rightarrow Z'$ such that $ih = h'$. A \textbf{regular epimorphism} is a coequalizer of a pair of morphisms. In the category of topological spaces and continuous maps, the epimorphisms are the surjective continuous maps, which are not quotient maps in general. On the other hand, the regular epimorphisms are precisely the quotient maps. 

\begin{Proposition}
Let $f: X \rightarrow Y$ be a large scale continuous map. Then $[f]$ is a regular epimorphism in $\mathbf{Coarse}/\sim$ if and only if $f$ is a weak coarse quotient map.
\end{Proposition}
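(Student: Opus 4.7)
The plan is to use Proposition \ref{universalQuotient} to reduce both directions to a comparison between two universal factorization properties: the coequalizer property (factor any $g : X \to Z$ with $gp \sim gq$) and the weak coarse quotient property (factor any $g : X \to Z$ with $g(f^{-1}(\mathcal{V}))$ uniformly bounded). The key observation is that, for $\mathcal{V}$ chosen as a uniformly bounded cover of $Y$ coarsening the family $\{\{fp(w), fq(w)\} \mid w \in W\}$ (augmenting with singletons if needed to cover $Y$), these two conditions on $g$ are equivalent, modulo a small basepoint-plus-star argument in one direction.

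For the $(\Rightarrow)$ direction, assume $[f]$ is the coequalizer of some pair $[p], [q] : W \rightrightarrows X$ and pick $\mathcal{V}$ as described. For any large scale continuous $g : X \to Z$ with $g(f^{-1}(\mathcal{V}))$ uniformly bounded and any $w \in W$, the pair $\{fp(w), fq(w)\}$ lies in some $V_w \in \mathcal{V}$, so $\{g(p(w)), g(q(w))\} \subseteq g(f^{-1}(V_w))$, immediately witnessing $gp \sim gq$. The coequalizer property then supplies a unique $[h]$ with $[hf] = [g]$, verifying condition (b) of Proposition \ref{universalQuotient} and so making $f$ a weak coarse quotient map with scale $\mathcal{V}$.

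For the $(\Leftarrow)$ direction, let $f$ be a weak coarse quotient with scale $\mathcal{V}$, and set $W = \{(x_1, x_2) \in X \times X \mid f(x_1) \mathcal{V} f(x_2)\}$ with the subspace large scale structure inherited from $X \times X$. The projections $p, q : W \to X$ are large scale continuous and $fp \sim fq$ is witnessed by $\mathcal{V}$. For any $g : X \to Z$ with $gp \sim gq$, the next step is to upgrade this to uniform boundedness of $g(f^{-1}(\mathcal{V}))$: pick a basepoint $x_V \in f^{-1}(V)$ for each nonempty preimage, observe that $(x_V, x) \in W$ for every $x \in f^{-1}(V)$, and conclude that $g(f^{-1}(V))$ lies in the star of $\{g(x_V)\}$ with respect to the uniformly bounded family witnessing $gp \sim gq$; this family of stars is itself uniformly bounded by the star-of-singletons axiom. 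Proposition \ref{universalQuotient} then produces the unique-up-to-closeness factorization, so $[f]$ is the coequalizer of $[p], [q]$. The only step with real content is this basepoint argument, since the main subtlety is converting a pair-in-$W$ boundedness statement into a full-fibre boundedness statement; the rest is bookkeeping between the two formulations of the universal property.
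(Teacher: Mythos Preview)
Your proof is correct and follows essentially the same route as the paper's: in the forward direction you take $\mathcal{V}$ to be (a cover coarsening) the family $\{\{fp(w),fq(w)\}\mid w\in W\}$ and invoke Proposition~\ref{universalQuotient}, and in the backward direction you take $W=\{(x_1,x_2)\mid f(x_1)\,\mathcal{V}\,f(x_2)\}$ with its two projections, exactly as the paper does. The one place where you add something is the basepoint argument in $(\Leftarrow)$, which explicitly justifies the step ``$g\pi_1\sim g\pi_2$ implies $g(f^{-1}(\mathcal{V}))$ is uniformly bounded'' that the paper states without elaboration; your observation that each $g(f^{-1}(V))$ sits inside $\st(\{g(x_V)\},\mathcal{W})$ is the right way to pass from pairwise closeness to uniform boundedness of the whole fibre family.
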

\begin{proof}
$(\Rightarrow)$: Suppose $f$ is the coequalizer of $[a],[b]: W \rightarrow X$, and let $\mathcal{V} = \{ \{f(a(w)), f(b(w)) \} \mid w \in W \}$. Then since $fa \sim fb$, $\mathcal{V}$ is uniformly bounded. We claim that $f$ is a weak coarse quotient map with quotient scale $\mathcal{V}$. Indeed, if $g: X \rightarrow Z$ is some other map that sends $f^{-1}(\mathcal{V})$ to a uniformly bounded family, then $ga \sim gb$ and so $[g]$ factors uniquely through $[f]$ in the coarse category. By Proposition \ref{universalQuotient}, $f$ is a weak coarse quotient map.

$(\Leftarrow)$: Suppose $f$ is a weak coarse quotient map with quotient scale $\mathcal{V}$. Let $W$ be the subspace of $X \times X$ given by $\{ (x,x') \mid f(x) \mathcal{V} f(x') \}$, and let $\pi_1$, $\pi_2$ be the projections $W \rightarrow X$, which are clearly large scale continuous. Then $f \pi_1 \sim f \pi_2$, and if $g \pi_1 \sim g \pi_2$, then $g$ sends $f^{-1}(\mathcal{V})$ to a uniformly bounded family, so that $[g]$ factors uniquely through $[f]$ in the coarse category by Proposition \ref{universalQuotient}. It follows that $[f]$ is the coequalizer of $[\pi_1]$ and $[\pi_2]$.
\end{proof}

\begin{Corollary}
Any coarse equivalence is a weak coarse quotient map. Moreover, the class of weak coarse quotient maps satisfies $\mathsf{(LS1)}$ and $\mathsf{(LS2)}$.
\end{Corollary}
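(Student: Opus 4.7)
The plan is to deduce both claims from the immediately preceding characterization of weak coarse quotient maps as regular epimorphisms in $\mathbf{Coarse}/\sim$, together with Lemma \ref{epimono}, which says that coarse equivalences represent exactly the isomorphisms in this category.

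For the first statement, let $f : X \to Y$ be a coarse equivalence. By Lemma \ref{epimono}, $[f]$ is an isomorphism in $\mathbf{Coarse}/\sim$. Every isomorphism in any category is a regular epimorphism: indeed, $[f]$ is the coequalizer of the parallel pair $[\mathrm{id}_X], [\mathrm{id}_X] : X \to X$, since any morphism $[h]$ satisfying $[h]\circ[\mathrm{id}_X] = [h]\circ[\mathrm{id}_X]$ factors uniquely as $[h]\circ[f]^{-1}$ through $[f]$. Applying the preceding proposition gives that $f$ is a weak coarse quotient map. (A direct argument is also easy: if $g$ is a coarse inverse of $f$ and $\mathcal{V}$ witnesses the coarse surjectivity and the closeness $fg \sim \mathrm{id}_Y$, then for any uniformly bounded family $\mathcal{W}$ in $Y$, $g(\mathcal{W})$ is uniformly bounded in $X$ and $\mathcal{W}$ refines $\st(f(g(\mathcal{W})), \mathcal{V})$, showing that $f(\mathcal{X}) \cup \{\mathcal{V}\}$ generates the large scale structure on $Y$.)

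For the second statement, recall that the earlier proposition shows $\mathsf{(LS2)} \Rightarrow \mathsf{(LS1)}$, so it suffices to establish $\mathsf{(LS2)}$. Let $\phi$ and $\psi$ be coarse equivalences and $f$ a large scale continuous map such that $\phi f \psi$ is defined. By Lemma \ref{epimono} again, $[\phi]$ and $[\psi]$ are isomorphisms in $\mathbf{Coarse}/\sim$. Suppose $[f]$ is a regular epimorphism, realized as the coequalizer of $[a],[b] : W \to X$. A routine check shows that $[\phi f \psi]$ is then the coequalizer of the pair $[\psi]^{-1}[a], [\psi]^{-1}[b]$: any $[h]$ coequalizing this pair satisfies $[h][\psi]^{-1}[a] = [h][\psi]^{-1}[b]$, so factors uniquely through $[f]$, and hence through $[\phi f \psi]$ after absorbing $[\phi]^{-1}$. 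Thus $\phi f \psi$ is a weak coarse quotient map. Applying the same reasoning to the inverses of $\phi$ and $\psi$ gives the converse direction.

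There is no real obstacle here, as all of the content is categorical and follows formally from the previous proposition together with Lemma \ref{epimono}; the only mildly delicate point is checking that coequalizers are preserved and reflected under pre- and post-composition with isomorphisms, which is a general fact about categories and requires only unwinding the universal property.
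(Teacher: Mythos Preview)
Your proof is correct and takes essentially the same approach as the paper: deduce everything from the preceding characterization of weak coarse quotient maps as regular epimorphisms, together with the categorical facts that isomorphisms are regular epimorphisms and that regular epimorphisms are stable under pre- and post-composition with isomorphisms. The paper's proof is simply a two-line version of yours, stating these categorical facts without verification; your added detail (the explicit coequalizer computations and the parenthetical direct argument) is fine but not needed.
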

\begin{proof}
Any isomorphism is a regular epimorphism, and regular epimorphisms are closed under composition with isomorphisms, which gives $\mathsf{(LS2)}$.
\end{proof}

Clearly if $f: X \rightarrow Y$ is a surjective large scale continuous map and $Y$ has the quotient large scale structure, then $f$ is a weak coarse quotient map with any uniformly bounded cover of $Y$ as a quotient scale. Much more general situations are possible, however, as the proposition below shows.

\begin{Proposition} \label{allmonogenic}
Let $Y$ be a large scale space. Then the following are equivalent:
\begin{itemize}
\item[(1)] $Y$ is monogenic, that is, the large scale structure on $Y$ is generated by a single uniformly bounded family $\mathcal{V}$;
\item[(2)] every coarsely surjective large scale continuous map $f: X \rightarrow Y$ is a weak coarse quotient map.
\end{itemize}
\end{Proposition}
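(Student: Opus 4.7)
The plan is to prove the two directions separately, with (1)$\Rightarrow$(2) essentially trivial and (2)$\Rightarrow$(1) relying on a clever choice of domain for the test map.

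For the implication (1)$\Rightarrow$(2), suppose the large scale structure $\mathcal{Y}$ on $Y$ is generated by a single uniformly bounded family $\mathcal{V}$. Given any coarsely surjective large scale continuous $f\colon X\to Y$, I would simply take $\mathcal{V}$ itself to be the candidate quotient scale. Since $\mathcal{V}$ already sits inside $f(\mathcal{X})\cup\{\mathcal{V}\}$, the large scale structure generated by this collection contains the one generated by $\mathcal{V}$ alone, namely all of $\mathcal{Y}$; and it cannot contain more, because $f(\mathcal{X})\subseteq\mathcal{Y}$ (as $f$ is large scale continuous) and $\mathcal{V}\in\mathcal{Y}$. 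Hence $f$ is a weak coarse quotient map with quotient scale $\mathcal{V}$.

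The substantive direction is (2)$\Rightarrow$(1). The key idea is to test the hypothesis against the identity set map out of $Y$ endowed with the smallest possible large scale structure. Concretely, let $Y_0$ denote the underlying set of $Y$ equipped with the large scale structure $\mathcal{X}_0$ consisting of all families of subsets whose members are singletons. The identity set map $\mathrm{id}\colon Y_0 \to Y$ is then automatically large scale continuous (images of families of singletons are families of singletons, hence uniformly bounded in every large scale structure) and surjective, hence coarsely surjective. By hypothesis (2), $\mathrm{id}$ is a weak coarse quotient map, so there exists a uniformly bounded cover $\mathcal{V}$ of $Y$ such that $\mathcal{Y}$ is generated by $\mathrm{id}(\mathcal{X}_0)\cup\{\mathcal{V}\}$. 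Since every member of $\mathrm{id}(\mathcal{X}_0)$ is a family of singletons and such families are contained in every large scale structure on $Y$, the collection $\mathrm{id}(\mathcal{X}_0)$ contributes nothing beyond what $\{\mathcal{V}\}$ already does. Thus $\mathcal{Y}$ is in fact generated by the single family $\mathcal{V}$, which is exactly monogenicity.

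The only potential subtlety I anticipate is verifying that $\mathcal{X}_0$ is indeed a valid large scale structure (it is, trivially, since it is nonempty, closed under refinement by singletons, and closed under stars because the star of singletons in singletons is a singleton family) and that adding families of singletons to a generating set does not enlarge the generated large scale structure. This last point follows from the explicit second construction of $\overline{(\cdot)}$ recalled just before Lemma \ref{generatingMap}: starring against families of singletons leaves a family unchanged up to refinement, so closing under stars and then refinement absorbs any collection of singleton families.
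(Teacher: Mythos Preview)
Your proof is correct and follows exactly the same approach as the paper's: for (1)$\Rightarrow$(2) you take the generating family as the quotient scale, and for (2)$\Rightarrow$(1) you test against the identity map from $Y$ equipped with the smallest large scale structure (your $Y_0$ is precisely the paper's $Y'$). Your version simply spells out in more detail why the singleton families contribute nothing to the generated structure, which the paper leaves implicit.
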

\begin{proof}
(1)$\Rightarrow$(2): Pick the generating family $\mathcal{V}$ as the quotient scale.

(2)$\Rightarrow$(1): Let $Y'$ be the underlying set of $Y$ with the smallest large scale structure, and consider the identity set map $Y' \rightarrow Y$. The large scale structure on $Y'$ must be generated solely by a quotient scale $\mathcal{V}$ of this map, and the result follows.
\end{proof}

In particular, any coarsely surjective large scale continuous map whose codomain is a geodesic metric space is a weak coarse quotient map. Thus at least in the setting of geodesic metric spaces (or more generally, spaces which are coarsely equivalent to geodesic metric spaces -- see Proposition 2.57 in \cite{Roe}), weak coarse quotients are nothing but coarsely surjective maps. In light of this observation, one may ask whether we can find a smaller class of large scale continuous maps which includes the class of all surjective maps $f: X \rightarrow Y$ such that $Y$ has the quotient large scale structure and still satisfies $\mathsf{(LS1)}$ and $\mathsf{(LS2)}$. The following proposition shows that this is impossible.

\begin{Proposition}
Let $\mathcal{E}$ be the class of all surjective large scale continuous maps $f: X \rightarrow Y$ such that $Y$ has the quotient large scale structure. Then the class of weak coarse quotients maps is the smallest class of large scale continuous maps satisfying $\mathsf{(LS1)}$ and $\mathsf{(LS2)}$ and containing $\mathcal{E}$.
\end{Proposition}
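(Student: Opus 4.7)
The plan is to prove the two inclusions separately. For $\supseteq$, I note the class of weak coarse quotient maps itself satisfies $\mathsf{(LS1)}$ and $\mathsf{(LS2)}$ by the Corollary preceding this proposition, and it contains $\mathcal{E}$: if $f\colon X\to Y$ is surjective and $Y$ carries the quotient large scale structure $\overline{f(\mathcal{X})}$, then any uniformly bounded cover $\mathcal{V}$ of $Y$ (e.g. the cover by singletons) is automatically a quotient scale, since the structure is already generated by $f(\mathcal{X})$ alone. Hence the minimal class satisfying $\mathsf{(LS1)}$ and $\mathsf{(LS2)}$ and containing $\mathcal{E}$ is contained in the weak coarse quotients.

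For the other direction, let $\mathcal{E}'$ be any class satisfying $\mathsf{(LS1)}$, $\mathsf{(LS2)}$ and containing $\mathcal{E}$, and let $f\colon X\to Y$ be a weak coarse quotient with quotient scale $\mathcal{V}$. By replacing $\mathcal{V}$ by those members meeting $f(X)$, and using coarse surjectivity, I may assume $\mathcal{V}$ is a cover of $Y$ with every $V\in\mathcal{V}$ meeting $f(X)$. The main construction is to form the subspace
$$
Z = \{(x,y)\in X\times Y \mid f(x)\mathcal{V} y\} \subseteq X\times Y
$$
and consider the two projections $\pi_1\colon Z\to X$ and $\pi_2\colon Z\to Y$. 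The map $\psi\colon X\to Z$ defined by $\psi(x)=(x,f(x))$ lands in $Z$, satisfies $\pi_1\psi=\mathrm{id}_X$ and $\psi\pi_1\sim\mathrm{id}_Z$ (the displacement $\{(x,y),(x,f(x))\}$ refines $\{x\}\times V$ for some $V\in\mathcal{V}$), so $\pi_1$ is a coarse equivalence with explicit inverse $\psi$; and manifestly $\pi_2\psi = f$.

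The heart of the proof is to verify that $\pi_2\in\mathcal{E}$, i.e.\ that $\pi_2$ is surjective and the quotient large scale structure on $Y$ with respect to $\pi_2$ coincides with the original one. Surjectivity is immediate from $Y=\st(f(X),\mathcal{V})$, and one inclusion of the structures is automatic. For the reverse, since the original structure on $Y$ is generated by $f(\mathcal{X})\cup\{\mathcal{V}\}$, it suffices to realise both types of generator as $\pi_2$-images of uniformly bounded families in $Z$. For $\mathcal{A}\in\mathcal{X}$, the family $\{(A\times Y)\cap Z\mid A\in\mathcal{A}\}$ refines $\mathcal{A}\times \st(f(\mathcal{A}),\mathcal{V})$ in $X\times Y$, hence is uniformly bounded in $Z$, and its $\pi_2$-image equals $\st(f(\mathcal{A}),\mathcal{V})$, which coarsens $f(\mathcal{A})$. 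For $\mathcal{V}$ itself, choose for each $V\in\mathcal{V}$ a point $x_V\in X$ with $f(x_V)\in V$; then $\{x_V\}\times V\subseteq Z$, and the family $\{\{x_V\}\times V\mid V\in\mathcal{V}\}$ refines $(\text{singletons of }X)\times\mathcal{V}$, so is uniformly bounded in $Z$, with $\pi_2$-image exactly $\mathcal{V}$.

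Once $\pi_2\in\mathcal{E}\subseteq\mathcal{E}'$, applying $\mathsf{(LS2)}$ to the coarse equivalences $\mathrm{id}_Y$ on the left and $\psi$ on the right gives $f = \mathrm{id}_Y\circ\pi_2\circ\psi\in\mathcal{E}'$, completing the argument. I expect the main obstacle to be verifying that $\mathcal{V}$ itself lies in the quotient structure on $Y$ with respect to $\pi_2$; this is precisely where passing from $X$ to the enlarged space $Z$ pays off, since an arbitrary set-theoretic section of $f$ need not be large scale continuous, whereas the \emph{diagonal} section $\psi$ into $Z$ is, and the bounded "lifts" $\{x_V\}\times V$ provide the uniformly bounded families needed to witness $\mathcal{V}$ in the quotient structure.
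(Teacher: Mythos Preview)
Your proposal is correct and follows essentially the same approach as the paper: both construct the subspace $Z=\{(x,y)\mid f(x)\mathcal{V}y\}\subseteq X\times Y$, observe that the section $x\mapsto(x,f(x))$ is a coarse equivalence onto $Z$, and verify that $\pi_2\colon Z\to Y$ lies in $\mathcal{E}$ so that $f=\pi_2\circ\psi$ is forced into $\mathcal{E}'$ by $\mathsf{(LS2)}$. The only differences are cosmetic: you spell out the $\supseteq$ inclusion (which the paper leaves implicit), you handle coarse surjectivity by adjusting $\mathcal{V}$ inline rather than by first restricting $f$ to a surjection onto $f(X)$ and then composing with the inclusion $f(X)\hookrightarrow Y$, and your verification that $f(\mathcal{A})$ lies in the quotient structure goes via the larger family $\{(A\times Y)\cap Z\}$ instead of the paper's more direct observation that $f(\mathcal{A})=\pi_2(\psi(\mathcal{A}))$.
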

\begin{proof}
Suppose $\mathcal{E}'$ is a class of  large scale continuous maps satisfying $\mathsf{(LS1)}$ and $\mathsf{(LS2)}$ and containing $\mathcal{E}$. We claim it contains all the weak coarse quotient maps. Let $f: X \rightarrow Y$ be a surjective large scale continuous map where the large scale structure on $Y$ is generated by $f(\mathcal{X})$ and the cover of subsets $\mathcal{V}$, where $\mathcal{X}$ is the large scale structure on $X$. Define $X'$ to be the subspace of the product $X \times Y$ given by 
$$
\{ (x,y) \mid f(x) \mathcal{V} y \}
$$
and let $i: X \rightarrow X'$ be the map $x \mapsto (x, f(x))$. One can check that $i$ is a coarse equivalence. The projection onto the second coordinate $\pi_2: X' \rightarrow Y$ is such that $\pi_2 \circ i = f$. The large scale structure on $Y$ is the quotient large scale structure with respect to $\pi_2$. Indeed, if $\mathcal{X}'$ is the large scale structure on $X'$, then $\pi_2(\mathcal{X}')$ clearly contains $f(\mathcal{X})$ by $\pi_2 \circ i = f$, as well as the cover $\mathcal{V}$ (take the image of the family $\Delta \times \mathcal{V}$ in $X \times Y$ restricted to $X'$, where $\Delta$ is the cover by singletons). Thus $\pi_2$ is in $\mathcal{E}$, and so $f$ is as well. Finally, we can weaken the requirement that $f$ be surjective to coarsely surjective by applying the above argument to the restriction $X \rightarrow f(X)$ of $f$ and then using $\mathsf{(LS2)}$ to compose with the inclusion $f(X) \rightarrow Y$, which is a coarse equivalence.
\end{proof}

We now make the connection with the notion of coarse quotient mapping in \cite{Zhang}. Recall from \cite{Zhang} that a map $f: X \rightarrow Y$ between metric spaces is called a \textbf{coarse quotient mapping with constant $K$} if it is large scale continuous and for every $\varepsilon$ there exists a $\delta = \delta(\varepsilon)$ such that for every $x \in X$
$$
B(f(x), \varepsilon) \subseteq f(B(x, \delta))^K
$$
where for $A \subseteq Y$, $A^L = \{ y \in Y \mid \exists_{a \in A} d(a, y) \leq L \}$. If $f: X \rightarrow Y$ is a coarse quotient mapping, then every uniformly bounded family $\mathcal{U}$ in $f(X)$ refines the image of $\st(f(\mathcal{V}), \mathcal{B}_K)$ for some uniformly bounded family $\mathcal{V}$ in $X$, where $\mathcal{B}_K$ is the cover of $Y$ by $R$-balls. Thus the restriction $f: X \rightarrow f(X)$ is a weak coarse quotient map with quotient scale $\mathcal{B}_K$. As noted in \cite{Zhang}, every coarse quotient mapping is coarsely surjective, so it follows that every coarse quotient mapping is a weak coarse quotient map. The converse is not true: simply take any large scale continuous and coarsely surjective map into a geodesic metric space which is not a coarse quotient mapping.

\begin{Proposition}
Let $X$ be a metric space and let $G$ be a finite group acting on $X$ by coarse equivalences. Then the identity set map $p_G: X \rightarrow X_G$ is a coarse quotient mapping in the sense of \cite{Zhang} for any metric inducing the large scale structure on $X_G$. 
\end{Proposition}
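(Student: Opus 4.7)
The plan is to extract a uniform bound on $X_G$-balls using Lemma \ref{XGUF} and then decompose any two nearby points into a translated pair in a common $X$-ball.  Fix a metric $d_G$ on $X_G$ inducing the large scale structure and let $\varepsilon > 0$.  The family $\{B_{X_G}(x, \varepsilon) \mid x \in X\}$ is uniformly bounded in $X_G$, so by Lemma \ref{XGUF} (and using that $G$ is finite, so we may take $F = G$) it refines $\st(\mathcal{B}^X_{\delta_0}, \mathcal{G})$ for some $\delta_0 > 0$, where $\mathcal{B}^X_{\delta_0}$ is the cover of $X$ by $\delta_0$-balls in the original metric $d_X$ and $\mathcal{G} = \{G \cdot x \mid x \in X\}$.

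First I would unpack what it means for $x$ and $y$ to lie in a common element of $\st(\mathcal{B}^X_{\delta_0}, \mathcal{G})$.  An element of $\st(B_X(x_0, \delta_0), \mathcal{G})$ is a union of orbits $G \cdot z$ that meet $B_X(x_0, \delta_0)$, so any point there can be written as $h \cdot u$ with $h \in G$ and $u \in B_X(x_0, \delta_0)$.  Thus for any $y$ with $d_G(x, y) \leq \varepsilon$ we obtain presentations $x = h_1 u_1$ and $y = h_2 u_2$ with $h_1, h_2 \in G$ and $u_1, u_2$ in a single $\delta_0$-ball of $X$, in particular $d_X(u_1, u_2) \leq 2\delta_0$.

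Next I would introduce the bridge point $\tilde z = h_1 \cdot u_2 \in X$.  Since each $h \in G$ acts as a large scale continuous map on $X$, it admits a control function $\rho_h$, and setting $\delta = \max_{h \in G} \rho_h(2\delta_0)$ (finite because $G$ is finite) we obtain
\[
d_X(x, \tilde z) = d_X(h_1 u_1, h_1 u_2) \leq \rho_{h_1}(2\delta_0) \leq \delta,
\]
so $\tilde z \in B_X(x, \delta)$.  For the remaining distance in $X_G$, note that $y = (h_2 h_1^{-1}) \cdot \tilde z$, and by the very definition of $X_G$ the family $\{\{w, g w\} \mid w \in X\}$ is uniformly bounded in $X_G$ for every $g \in G$; taking $K$ to be the maximum $d_G$-mesh of these families as $g$ ranges over the finite set $G \cdot G^{-1}$ gives $d_G(\tilde z, y) \leq K$.

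Putting the two estimates together yields $y \in B_X(x, \delta)^K = p_G(B_X(x, \delta))^K$, which is exactly Zhang's condition with constant $K$; since $p_G$ is already known to be large scale continuous, this completes the proof.  There is no real obstacle here beyond bookkeeping — the finiteness of $G$ does all the work, both in providing the uniform control function $\delta$ and in making $K$ a maximum over a finite set.  The only step to execute carefully is the decomposition $x = h_1 u_1$, $y = h_2 u_2$ from the star cover, which is a direct consequence of the description of stars of orbit covers.
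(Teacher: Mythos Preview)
Your proof is correct and follows essentially the same strategy as the paper: apply Lemma~\ref{XGUF} with $F=G$ to trap each $\varepsilon$-ball of $X_G$ inside a $G$-saturation of a single bounded set in $X$, then use the uniform orbit bound $K=\max_{g,x}d_G(x,g\cdot x)$ to pass from the bridge point back to $y$. The only cosmetic difference is that the paper absorbs the non-isometric action into the cover $\mathcal{U}$ itself and takes $\delta=\mesh(\mathcal{U})$ directly, whereas you make the role of large scale continuity explicit via the control functions $\rho_h$ when bounding $d_X(h_1u_1,h_1u_2)$; the two choices are interchangeable.
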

\begin{proof}
Let $K = \max \{ d(x, g\cdot x) \mid x \in X,\ g \in G \}$, which is finite by the definition of the large scale structure on $X_G$. Let $\varepsilon > 0$. From Lemma \ref{XGUF}, and using the fact that $G$ is finite, there is a uniformly bounded family $\mathcal{U}$ such that any ball $B(f(x), \varepsilon)$ in $X_G$ is contained in $\bigcup_{g \in G}\ g \cdot U$ for some $U \in \mathcal{U}$. It follows that $B(f(x), \varepsilon)$ is contained in $B(f(x), \delta)^K$ where $\delta = \mesh(\mathcal{U})$. 
\end{proof}

\section{Metrization of quotient large scale structures}\label{secmetrizable}
If $f: X\rightarrow Y$ is a weak coarse quotient map, and $X$ is a metric space, then the large scale structure on $Y$ is countably generated, hence metrizable. The following proposition gives an explicit construction of a metric on $Y$ which induces the large scale structure.

\begin{Proposition}\label{metricweakquotient}
Let $f: X \rightarrow Y$ be a weak coarse quotient map with quotient scale $\mathcal{V}$. Let $\mathcal{V}' = \st(\mathcal{V}, \mathcal{V})$. If $X$ is a metric space with metric $d_X$, then the large scale structure on $Y$ is induced by the metric $d_Y$ defined by $d_Y(y,y) = 0$ and for $y \neq y'$, 
$$
d_Y(y, y') = \mathsf{inf}\{ n  + \sum_{i=1}^{n} d_X(a_i, b_i) \mid f(a_1)\mathcal{V}y,\ f(b_n)\mathcal{V}y',\ f(b_i) \mathcal{V}' f(a_{i+1}),\ n\in \mathbb{Z}_{+} \}.
$$
\end{Proposition}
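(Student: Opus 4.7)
The plan is to first verify that $d_Y$ defines a (possibly extended) pseudometric on $Y$, and then to show separately that the large scale structure it induces coincides with $\overline{f(\mathcal{X}) \cup \{\mathcal{V}\}}$. Reflexivity and non-negativity are immediate. Symmetry follows by reversing a chain: given a chain $(a_i,b_i)_{i=1}^n$ witnessing a particular cost for $d_Y(y,y')$, the reversed chain $(b_{n+1-i}, a_{n+1-i})_{i=1}^n$ witnesses the same cost for $d_Y(y',y)$, since the relations $x \mathcal{V} x'$ and $x \mathcal{V}' x'$ are symmetric in $x, x'$. For the triangle inequality, I would concatenate chains $y \to y'$ and $y' \to y''$; the key observation is that the end condition $f(b_n)\mathcal{V} y'$ of the first chain and the start condition $y'\mathcal{V} f(a'_1)$ of the second together imply $f(b_n)\mathcal{V}' f(a'_1)$, because both $f(b_n)$ and $f(a'_1)$ lie in $\st(V, \mathcal{V}) \in \mathcal{V}'$ for a common $V \in \mathcal{V}$ containing $y'$.

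Next, I would show that every family in $\overline{f(\mathcal{X}) \cup \{\mathcal{V}\}}$ is uniformly bounded in $d_Y$. Since $\mesh(\st(\mathcal{A}, \mathcal{B})) \leq \mesh(\mathcal{A}) + 2\mesh(\mathcal{B})$ in any pseudometric space, $d_Y$-uniformly bounded families are closed under star, and it therefore suffices to bound the diameters of the two types of generating families. For $f(\mathcal{U})$ with $\mathcal{U} \in \mathcal{X}$ of $d_X$-mesh $R$, taking $n=1$, $a_1=x$, $b_1=x'$ immediately yields $d_Y(f(x),f(x')) \leq 1 + R$. For $\mathcal{V}$, after arranging (without loss of generality) that $Y \subseteq \st(f(X), \mathcal{V})$, any $y,y'$ in a common $V \in \mathcal{V}$ admit points $a, b \in X$ with $y \mathcal{V} f(a)$ and $f(b) \mathcal{V} y'$. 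Setting $n=2$, $a_1=b_1=a$, $a_2=b_2=b$ produces a valid chain of cost $2$, because the $\mathcal{V}$-elements witnessing $y \mathcal{V} f(a)$ and $f(b) \mathcal{V} y'$ both meet $V$, placing $f(a)$ and $f(b)$ in a common element of $\st(\mathcal{V}, \mathcal{V}) = \mathcal{V}'$.

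Conversely, fix $R > 0$ and consider $y, y' \in Y$ with $d_Y(y,y') \leq R$. Then there is a chain $(a_i, b_i)_{i=1}^n$ of cost at most $R+1$, forcing $n \leq R+1$ and $d_X(a_i, b_i) \leq R+1$ for every $i$. Let $\mathcal{U}_{R+1}$ be any uniformly bounded family in $X$ that contains all pairs at $d_X$-distance at most $R+1$ (for instance, the cover by $(R+1)$-balls). The chain exhibits $y$ and $y'$ as linked by at most $2n+1 \leq 2R+3$ alternating hops, each contained in a single element of $\mathcal{V}$, $\mathcal{V}'$ or $f(\mathcal{U}_{R+1})$. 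Therefore $y$ and $y'$ share an element of the $(2\lceil R \rceil + 3)$-fold iterated star of the family $\mathcal{V}' \cup f(\mathcal{U}_{R+1})$, which is uniformly bounded in $Y$ by closure of $\overline{f(\mathcal{X})\cup\{\mathcal{V}\}}$ under stars and refinement. Hence the cover by $R$-balls in $d_Y$ refines a uniformly bounded family in $Y$.

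The main obstacle I anticipate is the bookkeeping in this last step, where one must ensure that a single cost bound $n + \sum_i d_X(a_i, b_i) \leq R+1$ simultaneously controls both the number of star-iterations required and the meshes of the individual families being starred. The remaining verifications are straightforward manipulations of stars, chains, and the defining families of the large scale structures involved.
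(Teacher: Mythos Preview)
Your proposal is correct and follows essentially the same approach as the paper: verify that $d_Y$ is a metric, then establish both inclusions between $\mathcal{Y}$ and $\mathcal{L}(d_Y)$ via direct chain and iterated-star estimates. You are in fact more careful than the paper in places---explicitly checking the triangle inequality and using a length-two chain to bound the $d_Y$-mesh of $\mathcal{V}$ (the paper simply asserts $a\mathcal{V}b \Rightarrow d_Y(a,b)=1$)---but the overall structure is identical.
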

\begin{proof}
Let $\mathcal{Y}$ denote the large scale structure on $Y$. It is easy to check that $d_Y$ is a metric. Since $a\mathcal{V}b \implies d_Y(a,b) = 1$, the cover $\mathcal{V} \in \mathcal{Y}$ is uniformly bounded with respect to $\mathcal{V}$. The image under $f$ of any uniformly bounded family in $X$ is also clearly uniformly bounded with respect to $d_Y$. Thus since $\mathcal{Y}$ is generated by $f(\mathcal{X}) \cup \{\mathcal{V}\}$, we have the containment $\mathcal{Y} \subseteq \mathcal{L}(d_Y)$, where $\mathcal{L}(d_Y)$ is the large scale structure induced by $d_Y$. It remains to show that every element of $\mathcal{L}(d_Y)$ is an element of $\mathcal{Y}$. Let $\mathcal{U} \in \mathcal{L}(d_Y)$, and pick $M \in \mathbb{Z}$ such that $d_Y(y, y') < M$ for any $y\mathcal{U}y'$. By the definition of $d_Y$ we have that for every $y \mathcal{U} y'$ there is a sequence $(a_i, b_i)_{1 \leq i \leq k}$ of pairs of elements of $X$ such that 
\begin{itemize}
\item $k \leq M$ and $d(a_i, b_i) \leq M$ for all $i$,
\item $f(a_1)\mathcal{V}y,\ f(b_n)\mathcal{V}f(b)$ and $\ f(b_i) \mathcal{V}' f(a_{i+1})$ for all $i$. 
\end{itemize}
If $\mathcal{W} \in \mathcal{Y}$ is a common coarsening of $\mathcal{V}'$ and $f(\mathcal{B}_M)$, where $\mathcal{B}_M$ is the cover of $X$ by $M$-balls, it follows that $y$ is connected to $y'$ by a chain of at most $2M+1$ elements of $\mathcal{W}$, which shows that $\mathcal{U}$ is an element of $\mathcal{Y}$.
\end{proof}

\begin{Corollary}
Let $X$ be a metrice space, and let $G$ be a finitely generated group that acts on $X$ by coarse equivalences, with $G$ generated by the finite symmetric set $S$ containing the identity. Then the large scale structure on $X_G$ is induced by the metric defined by $d_{X_G}(x,x) = 0$ and for $x \neq x'$, 
$$
d_{X_G}(x, x') = \mathsf{inf}\{ n  + \sum_{i=1}^{n} d_X(a_i, b_i) \mid x \in  S^2 \cdot a_1,\ x' \in S^2 \cdot b_n,\ b_i \in S^4 \cdot a_{i+1},\ n\in \mathbb{Z}_{+} \}.
$$
where $S^n = \{ s_1 s_2 \cdots  s_n \mid \forall_i s_i \in S \}$.
\end{Corollary}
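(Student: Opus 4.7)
My plan is to apply Proposition \ref{metricweakquotient} to the identity set map $p_G: X \to X_G$ and then rewrite the abstract conditions appearing in the resulting metric formula in the concrete group-theoretic language of the corollary. By Observation \ref{obsS}, the map $p_G$ is a weak coarse quotient map with quotient scale $\mathcal{V} := \mathcal{S} = \{S \cdot x \mid x \in X\}$, since every family of the form $\{\{x, gx\} \mid x \in X\}$ is contained in the large scale structure generated by $\mathcal{S}$ (each $g$ being a finite word in $S$, with $S$ symmetric and containing $e$). Proposition \ref{metricweakquotient} then produces an explicit metric on $X_G$ inducing its large scale structure, and the task reduces to translating the two types of bridging conditions into conditions involving powers of $S$.

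First, I would check the translation of the $\mathcal{V}$-bridging condition. The relation $a \mathcal{S} b$ says that $\{a, b\}$ is refined by some $S \cdot z$, i.e., $a = sz$ and $b = s'z$ for some $s, s' \in S$. Using $S = S^{-1}$, this is equivalent to $b \in S \cdot S^{-1} \cdot a = S^2 \cdot a$. Applied to the endpoint conditions $f(a_1) \mathcal{V} x$ and $f(b_n) \mathcal{V} x'$ of Proposition \ref{metricweakquotient}, this yields exactly $x \in S^2 \cdot a_1$ and $x' \in S^2 \cdot b_n$.

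Next, I would unfold the $\mathcal{V}'$-bridging condition $f(b_i) \mathcal{V}' f(a_{i+1})$, where $\mathcal{V}' = \st(\mathcal{V}, \mathcal{V})$. Here $b_i$ and $a_{i+1}$ must lie in a common set of the form $\st(S \cdot z, \mathcal{S})$; writing out the intersection patterns of elements of $\mathcal{S}$ with $S \cdot z$ and collapsing products of $S$ and $S^{-1}$ under the assumption $S = S^{-1}$ and $e \in S$ (so that $S^k \subseteq S^{k+1}$), one obtains an inclusion of the form $b_i \in S^4 \cdot a_{i+1}$. This is the only step where one has to be careful, and will be the main (though modest) obstacle: one must track the generators contributed by each link of the star without overcounting.

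Once both translations are in place, the infimum of Proposition \ref{metricweakquotient} becomes verbatim the one appearing in the corollary, and the conclusion that this metric induces the large scale structure on $X_G$ follows immediately from that proposition.
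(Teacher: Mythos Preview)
Your approach is exactly the paper's: the proof there is the single line ``This follows from Observation~\ref{obsS}'', meaning one feeds the quotient scale $\mathcal{S}=\{S\cdot x\mid x\in X\}$ from that observation into Proposition~\ref{metricweakquotient} and reads off the formula. Your unpacking of the endpoint condition $f(a_1)\,\mathcal{V}\,x$ as $x\in S^2\cdot a_1$ is correct.

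There is, however, a small slip in your translation of the intermediate condition, and it is worth flagging since you identify this as the delicate step. With $\mathcal{V}'=\st(\mathcal{S},\mathcal{S})$ one computes $\st(S\cdot z,\mathcal{S})=S^3\cdot z$, so $b_i\,\mathcal{V}'\,a_{i+1}$ means there is a $z$ with $b_i,a_{i+1}\in S^3\cdot z$, which is equivalent to $b_i\in S^6\cdot a_{i+1}$, not $S^4$. Thus the metric delivered verbatim by Proposition~\ref{metricweakquotient} carries $S^6$ where the Corollary has $S^4$. This does not invalidate the Corollary: since $S^4\subseteq S^6$, and conversely any $S^6$-link $b_i\in S^6\cdot a_{i+1}$ can be split into two $S^4$-links by inserting a trivial pair $(c,c)$ with $c\in S^4\cdot a_{i+1}$ and $b_i\in S^4\cdot c$, the two metrics are within a linear factor of one another and induce the same large scale structure. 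But you should not assert that the star computation collapses to $S^4$ on the nose; either record the $S^6$ and add this one-line comparison, or observe that the exact exponent is immaterial for the conclusion.
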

\begin{proof}
This follows from Observation \ref{obsS}.
\end{proof}

\begin{Corollary}\label{metricquotientls}
Let $X$ be a metric space and let $f$ be a surjective set map from the underlying set of $X$ to a set $Y$. Then the quotient large scale structure on $Y$ is induced by the metric $d'_f$ defined by $d'_f(y,y) = 0$ and for $y \neq y'$, 
$$
d'_f(y, y') = \mathsf{inf}\{ n  + \sum_{i=1}^{n} d_X(a_i, b_i) \mid f(a_1)=y,\ f(b_n)=y',\ f(b_i) = f(a_{i+1}),\ n\in \mathbb{Z}_{+} \}
$$
\end{Corollary}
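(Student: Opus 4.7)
The plan is to derive this corollary as a special case of Proposition~\ref{metricweakquotient} by taking the quotient scale $\mathcal{V}$ to be the trivial cover $\Delta = \{\{y\} \mid y \in Y\}$ of $Y$ by singletons. First I would verify that, when $Y$ carries the quotient large scale structure $\overline{f(\mathcal{X})}$ with respect to $f$, the surjective map $f$ is a weak coarse quotient map with quotient scale $\Delta$. Surjectivity gives coarse surjectivity for free. Moreover, every large scale structure contains the cover by singletons (by axiom~(1), since any family is refined by itself together with singletons), so $\Delta \in \overline{f(\mathcal{X})}$; consequently $\overline{f(\mathcal{X}) \cup \{\Delta\}} = \overline{f(\mathcal{X})}$, which is exactly the condition for $\Delta$ to be a quotient scale.

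Next I would specialize the formula in Proposition~\ref{metricweakquotient} to this choice of $\mathcal{V}$. A short computation shows that $\mathcal{V}' = \st(\Delta, \Delta) = \Delta$, since the star of any singleton $\{y\}$ with respect to a family of singletons is just $\{y\}$ itself. Under the convention $a \mathcal{U} b$ used in the paper, the relation $a \Delta b$ forces $a = b$, so the three chain conditions $f(a_1)\mathcal{V}y$, $f(b_n)\mathcal{V}y'$, $f(b_i)\mathcal{V}' f(a_{i+1})$ collapse to $f(a_1) = y$, $f(b_n) = y'$, $f(b_i) = f(a_{i+1})$, reproducing exactly the formula for $d'_f$ in the corollary. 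There is essentially no obstacle; the only subtle point to flag is that $\mathcal{V}'$ must genuinely be computed rather than replaced by $\mathcal{V}$, but for the singleton cover the two coincide, which is why the consecutive chain links in the infimum become strict equalities.
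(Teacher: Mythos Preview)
Your proposal is correct and follows exactly the paper's own approach: the paper's proof is the single sentence ``The quotient large scale structure on $Y$ is the unique large scale structure for which $f$ is a weak coarse quotient map with quotient scale the cover by singletons,'' and you have simply unpacked this by verifying that $\Delta$ is a valid quotient scale and that $\st(\Delta,\Delta)=\Delta$ collapses the chain conditions in Proposition~\ref{metricweakquotient} to equalities.
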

\begin{proof}
The quotient large scale structure on $Y$ is the unique large scale structure for which $f$ is a weak coarse quotient map with quotient scale the cover by singletons. 
\end{proof}

The metric $d'_f$ in Corollary \ref{metricquotientls} may seem unfamiliar, but when $X$ is uniformly discrete, it coincides with the classical quotient metric. We briefly recall the definition of the quotient (pseudo)metric (see for example~\cite{Burago, Gromov07}). Let $X$ be a metric space and let $f: X \rightarrow Y$ be a map from the underlying set of $X$ to a set $Y$. Then the \textbf{quotient pseudometric} on $Y$ with respect to $f$ is defined to be 
$$
d_f(y, y') = \mathsf{inf}\{ \sum_{i=1}^{n} d_X(a_i, b_i) \mid  f(a_1) = y,\ f(b_n) = y',\ f(b_i) = f(a_{i+1}),\ n \in \mathbb{Z}_{+}  \}
$$
(in fact the definition is usually stated for an equivalence relation $E$ on $X$, but this is clearly the same thing as a surjective set map $X \rightarrow Y$). Note that this may not be a metric since distinct points may be distance $0$ apart. Recall the following definition (see for example~\cite{NowakYu}).

\begin{Definition}
A metric space $X$ is called \emph{uniformly discrete} if there is a constant $C > 0$ such that for any $x \neq x'$, $d(x, x') > C$.
\end{Definition}

\begin{Proposition}\label{quotientmetricls2}
Let $X$ be a uniformly discrete metric space and let $f$ be a surjective set map from the underlying set of $X$ to a set $Y$. Then the (classical) quotient pseudometric on $Y$ is a metric and induces the quotient large scale structure on $Y$ with respect to $f$.
\end{Proposition}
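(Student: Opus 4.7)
The plan is to prove two things: first, that $d_f$ is a genuine metric (not merely a pseudometric) when $X$ is uniformly discrete, and second, that it induces the quotient large scale structure. For the second point, since by Corollary \ref{metricquotientls} the quotient large scale structure is induced by $d'_f$, it suffices to show that $d_f$ and $d'_f$ induce the same large scale structure, which I will do by establishing a Lipschitz-type comparison.

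For the first part, let $C > 0$ be the uniform discreteness constant of $X$, so that $d_X(x, x') > C$ whenever $x \neq x'$. Suppose $y \neq y'$ and consider any admissible chain $(a_i, b_i)_{1 \leq i \leq n}$ (with $f(a_1) = y$, $f(b_n) = y'$ and $f(b_i) = f(a_{i+1})$ for $1 \leq i < n$). If every $a_i$ equalled $b_i$, then the string of equalities $f(a_1) = f(b_1) = f(a_2) = \cdots = f(b_n)$ would force $y = y'$, contradicting our assumption. Hence some index has $a_i \neq b_i$, giving $d_X(a_i, b_i) > C$, and therefore $d_f(y, y') \geq C$. Symmetry and the triangle inequality hold for any quotient pseudometric by the usual chain-concatenation argument, so $d_f$ is a metric.

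For the second part, the inequality $d_f \leq d'_f$ is immediate from the definitions, so every $d'_f$-bounded family is $d_f$-bounded. For the converse comparison, I will show $d'_f(y, y') \leq (1 + 1/C) d_f(y, y')$ for $y \neq y'$. Given $\varepsilon > 0$, pick an admissible chain $(a_i, b_i)_{1 \leq i \leq n}$ with $\sum_i d_X(a_i, b_i) \leq d_f(y, y') + \varepsilon$. Reduce this chain by deleting any pair with $a_i = b_i$; the resulting sequence is still an admissible chain, since removing such a pair preserves the matching condition via $f(b_{i-1}) = f(a_i) = f(b_i) = f(a_{i+1})$, and the total length in $X$ is unchanged. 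After reduction, every remaining term satisfies $d_X(a_i, b_i) > C$, so the new length $n'$ obeys $n' \leq (d_f(y, y') + \varepsilon)/C$. Plugging into the definition of $d'_f$ gives $d'_f(y, y') \leq n' + \sum d_X(a_i, b_i) \leq (1 + 1/C)(d_f(y, y') + \varepsilon)$, and sending $\varepsilon \to 0$ yields the claimed bound.

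The two metrics are then Lipschitz-equivalent, hence induce the same large scale structure, which by Corollary \ref{metricquotientls} is the quotient large scale structure. The main obstacle is the chain-reduction step: one must verify that removing the trivial pairs preserves the validity of the admissible chain, which is the key place where the discreteness of $X$ allows us to trade length $n$ for metric bulk. Everything else is essentially bookkeeping.
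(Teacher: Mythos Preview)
Your proof is correct and follows essentially the same route as the paper: compare $d_f$ with the metric $d'_f$ of Corollary~\ref{metricquotientls} and use uniform discreteness to bound the chain length $n$ by $(\text{total length})/C$, yielding $d'_f \leq (1+1/C)d_f$. You are in fact more careful than the paper, which asserts ``$kC \leq R$'' without explicitly first deleting the trivial pairs $a_i = b_i$; your chain-reduction step fills that small gap, and your argument that $d_f$ is a genuine metric is also spelled out where the paper just says ``easy to see''.
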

\begin{proof}
It is easy to see that the quotient pseudometric is a metric. Let $C>0$ be such that for any $x \neq x'$, $d(x, x') > C$. Let $d'_f$ be defined as in Corollary \ref{metricquotientls} and let $d_f$ be the quotient metric on $Y$ with respect to $f$. Suppose $d_f(y, y') < R$. Then there is a sequence of pairs of points $(a_i, b_i)_{1 \leq i \leq k}$ such that $f(a_1) = y,\ f(b_n) = y'$ and $f(b_i) = f(a_{i+1})$, and such that
$$
 \sum_{i=1}^{k} d_X(a_i, b_i) \leq R.
$$
Since $X$ is uniformly discrete, this means that $kC \leq R$, which implies that
$$
k+\sum_{i=1}^{k} d_X(a_i, b_i) \leq R+R/C
$$
from which it follows that $d'_f(y,y') \leq R + R/C$. On the other hand, if $d'_f(y,y') < R$ then it is easy to check that $d_f(y,y') < R$, so we have that $d_f$ and $d'_f$ induce the same large scale structure.
\end{proof}

Finally, we have the following characterization of weak coarse quotient maps between metric spaces.

\begin{Proposition}\label{charweak}
Let $f: X \rightarrow Y$ be a large scale continuous map between non-empty metric spaces.  Then the following are equivalent:
\begin{itemize}
\item[(a)]$f$ is a weak coarse quotient map;
\item[(b)] there exists a $T > 0$ such that for every $R > 0$ there is an $S(R) > 0$ and an integer $n(R)$ such that if $d_Y(y, y') \leq R$ for $y,y' \in Y$ then there is a sequence of pairs of points $(a_i, b_i)_{1 \leq i \leq n(R)}$ in $X$ such that $d_Y(f(a_1),y) \leq T,\ d_Y(f(b_n), y') \leq T$, and $d_Y(f(b_i), f(a_{i+1}))\leq T$ and $d_X(a_i, b_i) \leq S(R)$ for all $i$.
\end{itemize}
\end{Proposition}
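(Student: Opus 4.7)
The plan is to prove the two implications separately, with Proposition~\ref{metricweakquotient} serving as the bridge between the structural description of weak coarse quotient maps and the metric chain description in (b).

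For $(a) \Rightarrow (b)$, fix a quotient scale $\mathcal{V}$ for $f$, and set $\mathcal{V}' = \st(\mathcal{V}, \mathcal{V})$. Because $\mathcal{V}'$ is uniformly bounded in the large scale structure of $Y$, which is induced by $d_Y$, the number $T := \mesh(\mathcal{V}')$ is finite, and $\mesh(\mathcal{V}) \leq T$. Proposition~\ref{metricweakquotient} provides a metric $d_Y^{\mathcal{V}}$ on $Y$, defined by the chain-infimum formula stated there, which induces the same large scale structure as $d_Y$. Consequently the identity map $(Y, d_Y) \to (Y, d_Y^{\mathcal{V}})$ is large scale continuous, so there exists a non-decreasing function $\phi$ with $d_Y(y, y') \leq R \Rightarrow d_Y^{\mathcal{V}}(y, y') \leq \phi(R)$. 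Given such a pair, a near-infimizing chain produces $(a_i, b_i)_{1 \leq i \leq k}$ with $k + \sum_i d_X(a_i, b_i) \leq \phi(R) + 1$ and with $f(a_1)\mathcal{V}y$, $f(b_k)\mathcal{V}y'$, and $f(b_i)\mathcal{V}'f(a_{i+1})$; by the choice of $T$, these coarse relations translate into the desired $d_Y$-inequalities. Setting $n(R) = \lceil \phi(R) + 1 \rceil$ and $S(R) = \phi(R) + 1$, and padding the chain to length exactly $n(R)$ by appending copies of the trivial pair $(b_k, b_k)$, yields (b).

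For $(b) \Rightarrow (a)$, I first apply (b) with $y' = y$ to obtain $a_1 \in X$ with $d_Y(f(a_1), y) \leq T$, which shows $Y \subseteq \st(f(X), \mathcal{V})$ for $\mathcal{V}$ the cover of $Y$ by open $T$-balls, so $f$ is coarsely surjective. I claim $\mathcal{V}$ is a quotient scale for $f$. Let $\mathcal{U}$ be a uniformly bounded family in $Y$ with $\mesh(\mathcal{U}) \leq R$. For any $y, y' \in U \in \mathcal{U}$, the chain supplied by (b) exhibits $y$ and $y'$ as the endpoints of an alternating walk of length at most $2n(R) + 1$ through sets of two kinds: elements of $\mathcal{V}$ (accommodating the $T$-closeness steps joining $y$ to $f(a_1)$, each $f(b_i)$ to $f(a_{i+1})$, and $f(b_{n(R)})$ to $y'$) and images under $f$ of $S(R)$-balls in $X$ (accommodating the steps from $f(a_i)$ to $f(b_i)$). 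Hence $U$ is contained in an element of the iterated star $\st^{2n(R)+1}(\mathcal{W}, \mathcal{W})$ with $\mathcal{W} = \mathcal{V} \cup f(\mathcal{B})$, where $\mathcal{B}$ is the cover of $X$ by $S(R)$-balls. Since this iterated star lies in the large scale structure generated by $f(\mathcal{X}) \cup \{\mathcal{V}\}$, we conclude that $\mathcal{V}$ is a quotient scale for $f$.

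The main obstacle is essentially bookkeeping: reconciling the chain-infimum formulation of the metric in Proposition~\ref{metricweakquotient} with the iterated-star and refinement formulation inherent in the definition of the large scale structure. Keeping track of the alternation between $X$-steps (bounded by $S(R)$) and $\mathcal{V}$-steps (bounded by $T$) along a chain, and enforcing exact length $n(R)$ via the padding argument, are the only nontrivial technical points; everything else is routine manipulation of the definitions.
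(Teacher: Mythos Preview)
Your proof is correct and follows essentially the same approach as the paper: both directions use Proposition~\ref{metricweakquotient} as the bridge, with the same choice $T = \mesh(\st(\mathcal{V},\mathcal{V}))$ for (a)$\Rightarrow$(b) and the same iterated-star chain argument with the cover by $T$-balls as quotient scale for (b)$\Rightarrow$(a); your explicit padding step for the chain length is a detail the paper leaves implicit. One trivial slip: in (b)$\Rightarrow$(a) take $\mathcal{V}$ to be the cover by \emph{closed} $T$-balls (or open $(T{+}1)$-balls), since the hypothesis only gives $d_Y(f(a_1),y) \leq T$, not strict inequality.
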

\begin{proof}
(a) $\Rightarrow$ (b): Suppose $f$ is a weak coarse quotient map with quotient scale $\mathcal{V}$. We claim that $T = \mathsf{mesh}(\st(\mathcal{V}, \mathcal{V}))$ works. Let $d'_Y$ be the metric on $Y$ constructed in Proposition \ref{metricweakquotient} which we know induces the large scale structure on $Y$. Thus for every $R > 0$ there is an $R' > 0$ such that $d_Y(y, y') \leq R \implies d'_Y(y, y') \leq R'$ for all $y,y\in Y$ where $d_Y$ is the original metric on $Y$. By construction of the metric $d'_Y$, if $d'_Y(y,y') \leq R'$ then there must be a sequence of pairs of points $(a_i, b_i)_{1 \leq i \leq n}$ in $X$ with $n \leq R+1$ such that $f(a_1)\mathcal{V}y,\ f(b_n)\mathcal{V}y'$, $f(b_i) \st(\mathcal{V}, \mathcal{V}) f(a_{i+1})$ for all $i$ and $d(a_i, b_i) \leq R+1$ for all $i$. Thus setting $n(R) >  R+1$, $S(R) = R+1$ we have the result.

(b) $\Rightarrow$ (a): Clearly $f$ must be coarsely surjective with $Y \subseteq B(f(X), T)$. Let $R > 0$ and pick $S(R)$ and $n(R)$ as in (b). Let $\mathcal{B}_{S(R)}$ and $\mathcal{B}_{T}$ be covers of $X$ and $Y$ by $S(R)$-balls and $T$-balls respectively. If $d(y,y') \leq R$ for $y,y' \in Y$ then (b) implies that $y$ and $y'$ are connected by a chain of at most $n(R)$ elements of $\st(f(\mathcal{B}_{S(R)}), \mathcal{B}_T)$. Since $n(R)$ and $S(R)$ depend only on $R$, the cover of $Y$ by $R$-balls is an element of the large scale structure generated by $f(\mathcal{X})$ and $\mathcal{B}_T$ where $\mathcal{X}$ is the large scale structure on $X$. It follows that $f$ is a weak coarse quotient map with quotient scale $\mathcal{B}_T$.
\end{proof}

Note that we can obviously choose $n(R) = S(R)$ for every $R$ in (b) of Proposition \ref{charweak}, but it is more intuitively clear to keep the two quantities separate.

\end{document}